\newtheorem{thm}{Theorem}[section]
\newtheorem{lem}{Lemma}[section]
\newtheorem{rem}{Remark}[section]
\newtheorem{dfn}{Definition}[section]
\newcommand{\pd}{\partial}
\newcommand{\gm}{\gamma}
\newcommand{\Gm}{\Gamma}
\newcommand{\bn}{\bm{\nu}}
\newcommand{\lm}{\lambda}
\newcommand{\Gl}{G^\lm}
\newcommand{\R}{\mathbb{R}}
\newcommand{\N}{\mathbb{N}}
\newcommand{\su}[2]{\sup_{#1}{#2}}
\newcommand{\av}[1]{\left| #1 \right|}
\newcommand{\Ip}[2]{\left\langle #1 , #2 \right\rangle}
\newcommand{\Ln}[2]{\left\| #1 \right\|_{L^2(#2)}}
\newcommand{\Lp}[2]{\left\| #1 \right\|_{L^{#2}(0,L)}}
\newcommand{\Lns}[2]{\left\| #1 \right\|_{L^{#2}_s}}
\newcommand{\Wn}[3]{\left\| #1 \right\|_{W^{#2}_{#3}(0,L)}}
\newcommand{\vk}{\kappa}
\newcommand{\vp}{\varphi}
\newcommand{\va}{\alpha}
\newcommand{\vb}{\beta}
\newcommand{\vd}{\delta}
\newcommand{\vz}{\zeta}
\newcommand{\vs}{\theta}
\newcommand{\mL}{\mathcal{L}}
\newcommand{\mE}{\mathcal{E}}
\newcommand{\mfq}{\mathfrak{q}}
\newcommand{\ve}{\varepsilon}
\newcommand{\lmt}[2]{\lim_{#1}{#2}}
\journal{Journal of Differential Equations}
\begin{document}

\begin{frontmatter}



\title{Curve shortening-straightening flow for \\ non-closed planar curves with infinite length}


\author{Matteo Novaga}
\address{Department of Mathematics, University of Padova, Via Trieste 63, 35121 Padova, Italy}

\author{Shinya Okabe}
\address{Mathematical Institute, Tohoku University, Sendai 980-8578, Japan}

\begin{abstract}
We consider a motion of non-closed planar curves with infinite length. 
The motion is governed by a steepest descent flow for the geometric functional 
which consists of the sum of the length functional and the total squared curvature. 
We call the flow shortening-straightening flow. 
In this paper, first we prove a long time existence result for the 
shortening-straightening flow for non-closed planar curves with infinite length. 
Then we show that the solution converges to a stationary solution as time 
goes to infinity. Moreover we give a classification of the stationary solution. 
\end{abstract}

\begin{keyword}
geometric evolution equations \sep fourth order equations 
\sep elastic curves


\MSC 53C44 \sep 35K55
\end{keyword}

\end{frontmatter}



\section{Introduction}
There are various studies about the steepest descent flow for geometric functional defined on 
closed curves, for example, the shortening flow (\cite{ange}, \cite{gage}, \cite{gray}), 
the straightening flow for curve with fixed total length (\cite{linner}, \cite{wen-1}, \cite{wen-2}), 
and the straightening flow for curve with fixed local length (\cite{koiso}, \cite{okabe-1}). 
In this paper, we consider the steepest descent flow called shortening-straightening flow. 

Let $\gm$ be a planar curve, $\vk$ be the curvature, and $s$ denote the arc-length parameter of $\gm$. 
For $\gm$, we consider the following geometric functional 
\begin{align} \label{s-s-energy}
E(\gm) = \lm^2 \mL(\gm) + \mE(\gm), 
\end{align}
where
\begin{align*}
\mL(\gm)= \int_\gm ds, \qquad 
\mE(\gm)= \int_\gm \vk^2 \, ds, 
\end{align*}
and $\lm$ is a given non-zero constant. 
The steepest descent flow for \eqref{s-s-energy} is given by the system 
\begin{align} \label{s-s-flow}
\pd_t \gm = ( - 2 \pd^2_s \vk - \vk^3 + \lm^2 \vk ) \bn, 
\end{align}
where $\bn$ is the unit normal vector of the curve pointing in the direction of the curvature. 
The functional $\mL(\gm)$ denotes the length functional of $\gm$. 
We call the steepest descent flow for length functional the curve shortening flow. 
On the other hand, the functional $\mE$ is well known as the total squared curvature 
or one dimensional Willmore functional. 
The steepest descent flow for the functional is called the curve straightening flow. 
Thus we call \eqref{s-s-flow} the shortening-straightening flow in this paper. 

We mention the known results of shortening-straightening flow. 
In 1996, it has been proved by A. Polden (\cite{polden}) that the equation \eqref{s-s-flow} admits smooth 
solutions globally defined in time, 
when the initial curve is closed and has finite length (i.e., compact without boundary). 
Furthermore, G. Dziuk, E. Kuwert, and R. Sch\"atzle (\cite{dziuk}) extended the result of 
\cite{polden} to closed curves with finite length in $\R^n$. 

We are interested in the following problem: 
``What is the dynamics of {\it non-closed} planar curve with {\it infinite} length 
governed by shortening-straightening flow?" 
In this paper, we prove that there exists a long time solution of shortening-straightening flow 
starting from smooth planar curve with infinite length. 
Moreover we show that the solution converges to a stationary solution as $t \to \infty$. 
Namely, we consider the following initial value problem: 
\begin{align} \label{s-s}
\begin{cases}
& \pd_t \gm  = (- 2 \pd^2_s \vk - \vk^3 + \lm^2 \vk ) \bn, \\
& \gm(x,0)= \gm_0(x).   
\end{cases} \tag{SS}
\end{align}
The initial curve $\gm_0$ is a smooth non-closed planar curve with infinite length. 
Moreover we assume that $\gm_0$  is allowed to have self-intersections but must be 
close to an axis in a $C^1$ sense as $\av{x} \to \infty$. 
More precisely, $\gm_0(x)= (\phi_0(x), \psi_0(x)) : \R \to \R^2$ satisfies the following assumptions: 
\begin{gather} 
\av{{\gm_0}'(x)} \equiv 1, \label{cond-1-i} \\
\pd^m_x \vk_0 \in L^2(\R) \quad \text{\rm for all} \,\,\,\,  m \geq 0, \label{cond-2-i} \\ 
\lmt{x \to \infty}{\phi_0(x)}= \infty, \quad 
\lmt{x \to -\infty}{\phi_0(x)}= -\infty,\quad 
\lmt{\av{x} \to \infty}{\phi'_0(x)}=1, \label{cond-3-i} \\ 
\psi_0(x) = O(x^{-\va}) \,\,\,\text{for \,some}\,\,\,\va > \frac{1}{2} \,\,\, \text{as} \,\,\, \av{x} \to \infty,\quad  
\psi'_0 \in L^2(\R),  \label{cond-4-i}
\end{gather}

We state the main result of this paper in a concise form: 

\vspace{0.1cm}

\begin{thm} \label{main-thm}
Let $\gm_0(x)$ be a planar curve satisfying \eqref{cond-1-i}--\eqref{cond-4-i}. 
Then there exist a family of smooth planar curves 
$\gm(x,t) : \R \times [0, \infty) \to \R^2$ satisfying \eqref{s-s}. 
Moreover, there exist sequences $\{ t_j \}^\infty_{j=1}$ 
and $\{ p_j \}^\infty_{j=1}$ and a smooth curve $\hat{\gm} : \R \to \R^2$ such that 
$\gm(\cdot,t_j) - p_j$ converges to $\hat{\gm}(\cdot)$ as $t_j \to \infty$ up to 
a reparametrization. The curve $\hat{\gm}$ satisfies $\mE(\hat{\gm})<\infty$ and 
the curvature $\hat{\vk}$ is a solution of 
\begin{align*}
2 \pd^2_{s} \hat{\vk} + \hat{\vk}^3 - \lm^2 \hat{\vk}=0. 
\end{align*}
\end{thm}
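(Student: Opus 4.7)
The proof splits naturally into three stages—local existence, uniform a priori bounds giving global existence, and subsequential convergence to a stationary solution—and I will discuss the main obstacle separately.

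\textbf{Local existence.} I would first recast \eqref{s-s-flow} as a strictly parabolic quasilinear system of fourth order. Since $\gm$ has infinite length and is close to the $x$-axis at infinity by \eqref{cond-3-i}--\eqref{cond-4-i}, the cleanest setup is to work with a parametrization $\gm(x,t)$ lying in standard Sobolev spaces on $\R$ (as a perturbation of the $x$-axis outside a large compact set) after applying a tangential DeTurck-type reparametrization to break gauge invariance. Solonnikov-type $L^2$ theory for fourth-order parabolic systems on $\R$ then yields a short-time smooth solution, and the smoothness assumption \eqref{cond-2-i} propagates.

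\textbf{Energy estimates and global existence.} The gradient-flow identity formally reads
$$\frac{d}{dt} E(\gm(t)) = -\int_\gm \bigl(-2\pd^2_s \vk - \vk^3 + \lm^2 \vk\bigr)^2\, ds,$$
but $E(\gm_0)=+\infty$. The finite quantity to monitor is the renormalized energy $\mE(\gm) + \lm^2 \mathcal{L}_{\mathrm{exc}}(\gm)$, where $\mathcal{L}_{\mathrm{exc}}(\gm) := \int_\gm (1-\cos\vs)\,ds$ is the excess length over the projection onto the $x$-axis and $\vs$ is the tangent angle; this is finite initially because $\psi'_0 = \sin\vs_0 \in L^2(\R)$ by \eqref{cond-4-i}. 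Granted propagation of the asymptotic conditions \eqref{cond-3-i}--\eqref{cond-4-i}, the resulting dissipation inequality then yields a uniform bound on $\mE(\gm(t))$ and on $\int_0^\infty\int_\gm|\pd_t\gm|^2\,ds\,dt$. Higher-derivative bounds $\|\pd^m_s\vk\|_{L^2(\R)} \le C_m$ follow by differentiating the semilinear fourth-order evolution equation for $\vk$ and applying Gagliardo-Nirenberg-type interpolation, precluding finite-time blow-up and producing a smooth global solution.

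\textbf{Subsequential convergence.} From the above, there exists $t_j \to \infty$ with $\int_\gm |\pd_t \gm(\cdot, t_j)|^2\, ds \to 0$. Since the curves $\gm(\cdot, t_j)$ may drift to infinity, I would fix a reference arc-length parameter (say $s=0$) and set $p_j := \gm(0, t_j)$ so that $(\gm(\cdot, t_j) - p_j)(0) = 0$. The uniform bounds $\|\pd^m_s \vk\|_{L^2(\R)} \le C_m$, combined with the arc-length parametrization, let me apply Arzelà-Ascoli on every bounded $s$-interval to extract smooth convergence of $\gm(\cdot, t_j) - p_j$ to a limit $\hat\gm : \R \to \R^2$. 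Passing to the limit in $\int_\gm (-2\pd^2_s\vk - \vk^3 + \lm^2\vk)^2\,ds \to 0$ yields the stationary equation for $\hat\vk$, and $\mE(\hat\gm) < \infty$ follows by lower semicontinuity.

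\textbf{Main obstacle.} The delicate point is propagating the asymptotic conditions \eqref{cond-3-i}--\eqref{cond-4-i} along the flow, so that $\mathcal{L}_{\mathrm{exc}}(\gm(t))$ stays finite and the dissipation identity becomes a meaningful inequality between finite quantities. I expect this to require weighted-in-space estimates, or decay estimates at infinity for the linearization of \eqref{s-s-flow} around the $x$-axis, and to be the most technical part of the argument. A secondary but nontrivial subtlety is ensuring that the translations $p_j$ produce curves with well-controlled asymptotics, rather than translates that oscillate wildly horizontally; fixing the reference arc-length point together with the $L^2$-in-space bounds on $\vk$ should be sufficient.
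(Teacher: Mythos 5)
Your renormalized energy $\mE(\gm)+\lm^2\int_\gm(1-\cos\vs)\,ds$ is exactly the right quantity: since $\int_\gm\cos\vs\,ds$ is the horizontal displacement, it coincides with $\mE(\gm)+\lm^2\bigl(\mL(\gm)-(\text{horizontal displacement})\bigr)$, which is precisely what the paper controls in Lemma \ref{bd-vk-r}. But your proposal contains no proof that this quantity is finite and monotone along the flow, and you say so yourself (``Granted propagation of the asymptotic conditions\dots''). The dissipation identity on $\R$ requires integrating by parts over the whole line, and the boundary terms at infinity vanish only if the decay encoded in \eqref{cond-3-i}--\eqref{cond-4-i} propagates in time; you defer this to unspecified weighted or linearized decay estimates, so the central analytic difficulty of the theorem is left open. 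The higher-order bounds $\Lns{\pd_s^m\vk}{2}\le C_m$ uniform in $t$ likewise do not follow ``by interpolation'' alone: in the paper they require the Bellettini--Mantegazza--Novaga-type polynomial bookkeeping of Lemmas \ref{l-deri-high} and \ref{high-energy}, and the time-uniform (rather than linearly growing) version needs the decay of $\Lns{\pd_t\gm}{2}$ from Lemma \ref{l-conv-0}, which itself rests on the energy identity you have not established.

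The paper's route is designed precisely to avoid this gap, and it is genuinely different from yours: it never solves the PDE on $\R$ directly. It truncates $\gm_0$ to $[-r,r]$ with a cut-off that flattens the curve onto the axis near the endpoints, solves the fixed-boundary problem \eqref{comp-sys} --- where the energy is finite, the boundary conditions $\vk=0$ and fixed endpoints kill all boundary terms, and the dissipation identity is exact --- and then proves the uniform-in-$r$ bound
\begin{align*}
\Lns{\vk_r}{2}^2\le\Lns{\vk_{0,r}}{2}^2+\lm^2\left\{\mL(\Gm_{0,r})-(\phi_0(r)-\phi_0(-r))\right\}
\end{align*}
via a second-variation estimate for the length of a graph over a segment (this is where $\psi_0,\psi_0'\in L^2$ enter). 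Arzel\`a--Ascoli applied to the family $\{\gm_r\}$ then produces the global solution on $\R\times[0,\infty)$, and the asymptotics $\gm\cdot{\bf e}\to0$, $\pd_x\gm\cdot{\bf e}\to0$ are recovered a posteriori by Gronwall estimates rather than assumed. Your local-existence and convergence steps are plausible in outline (the extraction of $\{t_j\}$, the choice $p_j=\gm(0,t_j)$, and passing to the limit in the Euler--Lagrange equation match Theorem \ref{main-thm-2}), but to complete your direct approach you would have to supply the missing propagation-of-decay argument, which is essentially as hard as the theorem itself; the compact exhaustion is the device that makes the renormalized energy usable without it.
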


\vspace{0.1cm}

Generally, in order to prove a long time existence of a steepest descent flow for a functional, 
we have to make use of a priori boundedness which proceeds from the functional. 
Thus the functional must be bounded at least for an initial state. 
However our functional $E$ is unbounded, because we consider planar curves with infinite length. 
This is a difficulty of our problem. 
One of the contribution of Theorem \ref{main-thm} is to prove a long time existence of 
the steepest descent flow for the unbounded functional $E$. 
In order to overcome the difficulty we mentioned above, 
we construct the solution of \eqref{s-s} by making use of Arzel\`a-Ascoli's theorem. 
To define a sequence approximating a solution of \eqref{s-s}, 
we need to solve a certain compact case with fixed boundary. 

Concerning the classification of stationary state, one of the types is a straight line. 
This is corresponding to a trivial stationary state. 
On the other hand, the other one corresponds to a non-trivial stationary state. 
We give not only a classification but also a characterization of them 
(see Theorem \ref{main-thm-2}). 
Although a dynamical aspect of solution of \eqref{s-s} is an open problem, 
to classify and to characterize the stationary state is an important step 
to comprehend the dynamics. 

The paper is organized as follows: In Section \ref{c-c-w-f-b}, we prove that, 
for a non-closed planar curve with finite length, 
there exists a unique long time classical solution of \eqref{s-s-flow} with 
certain boundary conditions. 
Furthermore we show that the solution converges to a stationary solution along a 
sequence of time $\{ t_j \}_j$ with $t_j \to \infty$. 
In Section \ref{n-comp-case}, we prove (i) a long time existence of solution of \eqref{s-s} 
and a certain asymptotic profile of the solution as $\av{x} \to \infty$ 
(Theorem \ref{main-thm-1}), 
(ii) a subconvergence of the solution to a stationary solution, 
(iii) a classification of the stationary solutions (Theorem \ref{main-thm-2}), 
and (iv) a characterization of a dynamical aspect of the solution of \eqref{s-s} 
(Theorem \ref{main-thm-3}).   

\section{Compact case with fixed boundary} \label{c-c-w-f-b}
Let $\Gm_0(x) : [0,L] \to \R^2$ be a smooth planar curve and 
$k_0(x)$ denote the curvature. 
Let $\Gm_0(x)$ satisfy 
\begin{align} \label{i-cond-cc}
\av{{\Gm_0}'(x)} \equiv 1, \quad 
\Gm_0(0) = (0, 0), \quad 
\Gm_0(L)=(R,0), 
\quad k_0(0)= k_0(L)=0, 
\end{align}
where $L>0$ and $R>0$ are given constants. 
We consider the following initial boundary value problem: 
\begin{align} \label{comp-sys}
\begin{cases}
& \pd_t \gm  = (- 2 \pd^2_s \vk - \vk^3 + \lm^2 \vk ) \bn, \\
& \gm(0,t) = (0,0), \quad \gm(L,t)  = (R, 0), \quad 
  \vk(0,t)= \vk(L,t) = 0, \\ 
& \gm(x,0)=\Gm_0(x)  
\end{cases} \tag{CSS}
\end{align}
The purpose of this section is to prove the following theorem: 
\begin{thm} \label{comp-gm-th}
Let $\Gm_0$ be a smooth planar curve satisfying the condition \eqref{i-cond-cc}. 
Then there exists a unique classical solution of \eqref{comp-sys} for any time $t > 0$. 
\end{thm}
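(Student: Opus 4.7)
The plan is to obtain a unique short-time classical solution via standard quasilinear parabolic theory, and then extend it to all $t > 0$ using a priori estimates coming from the energy $E$ together with higher-order Sobolev bounds on the curvature.

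For short-time existence, since the geometric flow \eqref{s-s-flow} is invariant under tangential reparametrizations, I would first augment the equation by a tangential term $T\bt$ chosen so that the parametrization remains close to arc-length (for instance, requiring that $|\pd_x \gm|$ depends only on $t$, not on $x$). The resulting system for $\gm(x,t)$ is a strictly parabolic quasilinear fourth-order system whose principal part is $-2|\pd_x \gm|^{-4}\pd_x^4\gm$, while the Dirichlet data at $x=0,L$ together with $\vk|_{\partial}=0$ (equivalently $\pd_x^2 \gm|_{\partial}=0$ in the arc-length parametrization) furnish a Lopatinskii-complementing boundary system. Short-time existence and uniqueness of a classical solution then follow from standard theory for quasilinear parabolic systems (e.g., Solonnikov or Lunardi); the tangential diffeomorphism can then be undone to produce a solution of \eqref{comp-sys}.

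The main a priori control comes from the energy identity
\begin{align*}
\frac{d}{dt} E(\gm(\cdot,t)) = -\int_{\gm(\cdot,t)} \bigl(-2\pd_s^2 \vk - \vk^3 + \lm^2 \vk\bigr)^2 \, ds,
\end{align*}
where the boundary condition $\vk(0,t)=\vk(L,t)=0$ ensures that all boundary terms produced by the integration by parts vanish. This yields the uniform bounds $\mL(\gm(\cdot,t)) \le \lm^{-2} E(\Gm_0)$ and $\mE(\gm(\cdot,t)) \le E(\Gm_0)$, while the fixed endpoints force $\mL(\gm(\cdot,t)) \ge R > 0$, preventing the curve from collapsing. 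Following the scheme of \cite{dziuk}, I would then derive Bernstein-type differential inequalities for $\int (\pd_s^m \vk)^2 \, ds$ using curve-tailored Gagliardo-Nirenberg interpolation inequalities together with the previous bounds, and iterate in $m \ge 0$ to obtain $C^\infty$ control on any finite time interval. This excludes finite-time blow-up and yields the desired global solution, with uniqueness inherited from the local result.

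The principal technical obstacle lies in the boundary contributions arising in the higher-order integration by parts. The hypothesis $\vk|_{\partial}=0$ kills the lowest-order boundary terms, but the derivatives $\pd_s^m \vk|_{\partial}$ for $m\ge 1$ are not directly prescribed; to close the estimates one must use the evolution equation at the boundary (and the constancy of $\gm|_{\partial}$) to express these derivatives recursively in terms of already-controlled quantities, or insert appropriate boundary-correction multipliers. Handling this carefully, while simultaneously keeping $|\pd_x\gm|$ bounded away from zero so that the flow stays strictly parabolic, constitutes the technical core of the argument.
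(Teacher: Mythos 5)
Your long-time argument coincides with the paper's: the energy identity (using both $\vk|_{\pd}=0$ and the fixed endpoints, which kill the boundary terms in the first variation) gives uniform control of $\Lns{\vk}{2}$ and of $\mL(\gm)$, the fixed endpoints give $\mL(\gm)\geq R$, and then curve-adapted Gagliardo--Nirenberg interpolation yields differential inequalities for $\Lns{\pd^m_s\vk}{2}^2$ in the Dziuk--Kuwert--Sch\"atzle style, which preclude finite-time blow-up. The boundary-term obstacle you single out is resolved in the paper by Lemma~\ref{l-odd-k}: \emph{all even-order} derivatives $\pd^{2m}_s\vk$ vanish at $x=0,L$, proved inductively from $\pd_t\gm|_{\pd}=0$, $\vk|_{\pd}=0$ and the structure of the evolution equation (every lower-order monomial in $\pd_t\pd^{2m-2}_s\vk$ contains a factor $\pd^{2l}_s\vk$). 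This is exactly the recursive mechanism you describe, and it is what licenses both the boundary-term-free integrations by parts and the interpolation inequalities of Lemma~\ref{l-inter-p}. Where you genuinely diverge is the short-time step. The paper does not introduce a tangential DeTurck term; it writes $\gm=\Gm_0+d\,\bn_0$ and reduces \eqref{comp-sys} to a single scalar fourth-order quasilinear equation for $d$ with boundary data $d=d''=0$, establishes sectoriality of $-2\pd^4_x$ via an explicit Green's function, and closes with maximal regularity in little H\"older spaces. Your gauge (constant-in-$x$ speed) is in fact consistent with the two fixed endpoints — the two constraints $T(0)=T(L)=0$ exactly determine the two free constants in the tangential velocity — but it makes the system \emph{nonlocal}, since $T$ is an integral of $\vk V$ along the curve; off-the-shelf Solonnikov/Lopatinskii theory for local quasilinear systems does not apply verbatim and you would need to absorb the nonlocal lower-order term into the fixed-point argument. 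The paper's normal-graph ansatz buys a local scalar equation at the cost of only working for short time (until $1-k_0 d$ degenerates), which is all that is needed since the continuation is done by the a priori estimates; your route, once the nonlocality is handled, would give a parametrization-independent statement more directly. Both are viable, and your proposal contains no essential gap.
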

\subsection{Short time existence} \label{s-exe}
First we show a short time existence of solution to \eqref{comp-sys}. 
Let 
\begin{align} \label{d-d}
\gm(x,t) = \Gm_0(x) + d(x,t) \bn_0(x),  
\end{align}
where $d(x,t) : [0,L] \times [0, \infty) \to \R$ is an unknown scalar function and 
$\bn_0(x)$ is the unit normal vector of $\Gm_0(x)$, 
i.e., $\bn_0(x)= \mathfrak{R} {\Gm_0}'(x) 
= \bigl( \begin{smallmatrix} 0 & -1 \\ 1 & 0\end{smallmatrix} \bigr) {\Gm_0}'(x)$. 
Under the formulation \eqref{d-d}, the boundary conditions 
$\gm(0,t)=(0,0)$ and $\gm(L,t)=(R,0)$ are reduced to  
\begin{align} \label{d-bdc-1}
d(0,t)= d(L,t)=0. 
\end{align}
With the aid of Frenet-Serret's formula ${\Gm_0}''= k_0 \bn_0$ and 
${\bn_0}' =-k_0 {\Gm_0}'$, we have   
\begin{align*}
\pd_x \gm &= (1 - k_0 d) {\Gm_0}' + \pd_x d \bn_0, \\
\mathfrak{R} \pd_x \gm &= -\pd_x d {\Gm_0}' + (1 - k_0 d)\bn_0, \\
\pd^2_x \gm &= (- {k_0}' d - 2 k_0 \pd_x d) {\Gm_0}' 
                + (\pd^2_x d + k_0 - {k_0}^2 d ) \bn_0, \\ 
\vk &= \dfrac{\pd^2_x \gm \cdot \mathfrak{R} \pd_x \gm}{\av{\pd_x \gm}^3} 
     = \dfrac{\pd_x d ( {k_0}' d + 2 k_0 \pd_x d) + (1 - k_0 d)(\pd^2_x d + k_0 - {k_0}^2 d)}
             {\left\{ (1 - k_0 d)^2 + (\pd_x d)^2 \right\}^{3/2} } . 
\end{align*}
Thus the condition $\vk(0,t)= \vk(L,t)=0$ implies 
\begin{align} \label{d-bdc-2}
\pd^2_x d(0,t) = \pd^2_x d(L,t)=0. 
\end{align}
Let $s=s(x,t)$ denote the arc length parameter of $\gm(x,t)$. Since 
\begin{align*}
s(x,t) = \int^x_0 \av{\pd_x \gm(x,t)}\, dx
       = \int^x_0 \left\{ (1 - k_0(x) d(x,t))^2 + (\pd_x d(x,t))^2 \right\}^{1/2}\, dx, 
\end{align*}
we have 
\begin{align} \label{ps-px}
\dfrac{\pd s}{\pd x} := \av{\gm_d} 
 = \left\{ (1 - k_0(x) d(x,t))^2 + (\pd_x d(x,t))^2 \right\}^{1/2}. 
\end{align}
Combining the relation \eqref{ps-px} with  
\begin{align*}
\dfrac{\pd}{\pd s}
 = \dfrac{\pd/\pd x}{\pd s/ \pd x}, 
\end{align*}
we obtain 
\begin{align*}
\dfrac{\pd}{\pd s} 
 = \dfrac{\pd_x}{\av{\gm_d}}. 
\end{align*}
Then we see that 
\begin{align*}
\pd^2_s \vk 
 = \dfrac{\pd_x}{\av{\gm_d}} \left( 
    \dfrac{\pd_x}{\av{\gm_d}} \left( 
     \dfrac{\pd_x d (\pd_x k_0 d + 2 k_0 \pd_x d) + (1 - k_0 d)(\pd^2_x d + k_0 - {k_0}^2 d)}
             {\av{\gm_d}^3}
      \right) \right). 
\end{align*}
This is reduced to 
\begin{align*}
\partial^2_s \vk 
 = \dfrac{1}{\av{\gm_d}^5} \pd^2_x \va_3 
    - \dfrac{7}{\av{\gm_d}^6} \pd_x \av{\gm_d} \pd_x \va_3 
     + \left\{ -\dfrac{3}{\av{\gm_d}^6} \pd^2_x \av{\gm_d} 
               + \dfrac{15}{\av{\gm_d}^7} \left( \pd_x \av{\gm_d} \right)^2 \right\} \va_3, 
\end{align*}
where 
\begin{align*}
\va_3= \pd_x d (\pd_x k_0 d + 2 k_0 \pd_x d) + (1 - k_0 d)(\pd^2_x d + k_0 - {k_0}^2 d). 
\end{align*}
Setting 
\begin{align*}
\va_1 &= \pd_x k_0 d + k_0 \pd_x d, \\
\va_2 &= \pd_x d \pd^2_x d + \va_1 (k_0 d - 1), \\
\va_4 &= \pd_x d \pd^3_x d + (\pd^2_x d)^2 + {\va_1}^2 
           + \pd_x \va_1 (k_0 d - 1), 
\end{align*}
we have 
\begin{align*}
\pd_x \av{\gm_d} = \dfrac{\va_2}{\av{\gm_d}}, \qquad 
\pd^2_x \av{\gm_d}= -\dfrac{{\va_2}^2}{\av{\gm_d}^3} 
                          + \dfrac{\va_4}{\av{\gm_d}}. 
\end{align*}
Thus $\partial^2_s \vk$ is written as 
\begin{align*}
\pd^2_s \vk 
 = \dfrac{1}{\av{\gm_d}^5} \pd^2_x \va_3 
    - \dfrac{1}{\av{\gm_d}^7}\left( 7 \va_2 \pd_x \va_3 + 3 \va_3 \va_4 \right)
     + \dfrac{18}{\av{\gm_d}^9} {\va_2}^2 \va_3.  
\end{align*}
Since $\vk= \va_3/\av{\gm_d}^3$ and 
\begin{align*}
\pd_t \gm = \pd_t d \bn_0,  
\end{align*}
we have 
\begin{align*}
\partial_t d &= 
 \left\{-\dfrac{2}{\av{\gm_d}^4} \partial^2_x \va_3 + \dfrac{14}{\av{\gm_d}^6} \va_2 \partial_x \va_3 
    + \dfrac{6}{\av{\gm_d}^6} \va_3 \va_4 - \dfrac{36}{\av{\gm_d}^8}{\va_2}^2 \va_3 
     -\dfrac{{\va_3}^3}{\av{\gm_d}^8} + \dfrac{\lm^2 \va_3}{\av{\gm_d}^2} \right\}
  \dfrac{1}{1 - k_0 d} \\
& = - \dfrac{2}{\av{\gm_d}^4} \pd^4_x d + \Phi(d). 
\end{align*}
Setting $A(d) = (-2/\av{\gm_d}^4) \pd^4_x$, 
the problem \eqref{comp-sys} is written in terms of $d$ as follows: 
\begin{align} \label{d-sys}
\begin{cases}
& \pd_t d  = A(d) d + \Phi(d), \\
& d(0,t) = d(L,t) = d''(0,t) = d''(L,t) =0, \\ 
& d(x,0)= d_0(x)= 0. 
\end{cases}
\end{align}
We find a smooth solution of \eqref{d-sys} for a short time. 
To do so, we need to show the operator $A_0:=A(d_0)$ is a sectorial operator. 
Since $A_0 = -2 \pd^4_x$, first we consider the boundary value problem 
\begin{align} \label{bd-prob-1}
\begin{cases}
& \pd^4_x \vp + \mu \vp = f, \\
& \vp(0)=\vp(L)=\vp''(0)=\vp''(L)=0,  
\end{cases}
\end{align}
where $\mu$ is a constant. 
The solution of \eqref{bd-prob-1} is written as 
\begin{align} \label{sol-ex}
\vp(x)= \int^L_0 G(x,\xi) f(\xi) \, d \xi, 
\end{align}
where $G(x,\xi)$ is a Green function given by 
\begin{align} \label{G-ex}
G(x,\xi) = 
\begin{cases}
\vspace{0.2cm}
\dfrac{1}{(2 \mu_*)^3} (g_1(\xi) g_2(x) + g_3(\xi) g_4(x)) \quad & \text{for} \quad 0 \leq x \leq \xi, \\
\dfrac{1}{(2 \mu_*)^3} (g_1(x) g_2(\xi) + g_3(x) g_4(\xi)) \quad & \text{for} \quad \xi < x \leq L. \\
\end{cases}
\end{align}
Here the functions $g_1$, $g_2$, $g_3$, $g_4$, and constants $K_0$, $K_1$, $K_2$, $\mu_*$ are given by 
\begin{align*}
g_1(\vz)&= \cos{\mu_* \vz} \sinh{\mu_* \vz} 
                     - \sin{\mu_* \vz} \cosh{\mu_* \vz}, \\ 
g_2(\vz)&= e^{\mu_* \vz} \cos{\mu_* \vz} 
            -\dfrac{K_1}{K_0} \cos{\mu_* \vz} \sinh{\mu_* \vz} 
            +\dfrac{K_2}{K_0} \sin{\mu_* \vz} \cosh{\mu_* \vz}, \\
g_3(\vz)&= \cos{\mu_* \vz} \sinh{\mu_* \vz} 
                     + \sin{\mu_* \vz} \cosh{\mu_* \vz}, \\ 
g_4(\vz)&= -e^{\mu_* \vz} \sin{\mu_* \vz} 
            +\dfrac{K_1}{K_0} \sin{\mu_* \vz} \cosh{\mu_* \vz} 
            +\dfrac{K_2}{K_0} \cos{\mu_* \vz} \sinh{\mu_* \vz}, \\
K_0 &= 2 \cos^2{\mu_* L} \sinh^2{\mu_* L} 
        + 2 \sin^2{\mu_* L} \cosh^2{\mu_* L}, \\
K_1 &= \dfrac{e^{2 \mu_* L} - \cos{2 \mu_* L} }{2}, \quad 
K_2  = - \dfrac{\sin{2 \mu_* L}}{2}, \quad 
\mu_* = \dfrac{\mu^{1/4}}{\sqrt{2}}. 
\end{align*}
By virtue of \eqref{sol-ex} and \eqref{G-ex}, 
we see that the solution of \eqref{bd-prob-1} satisfies a priori estimate 
\begin{align} \label{a-priori}
\Wn{\vp}{4}{p} \leq C \Lp{f}{p}  
\end{align} 
for any $p \geq 1$. 
Using the a priori estimate \eqref{a-priori}, 
we show that the operator $A_0$ generates an analytic semigroup on $L^p(0,L)$. 
Moreover we can verify that $A_0 : h^{4+4\vs}_B([0,L]) \to h^{4\vs}_B([0,L])$ is an 
infinitesimal generator of an analytic semigroup on $h^{4\vs}_B([0,L])$, 
where $0 < \vs < 1/4$ (for example, see \cite{lunardi}). 
Here $h^\va_B([0,L])$ is a little H\"older space with boundary condition: 
\begin{align}
h^{\va}_B([0,L])= 
\begin{cases}
\left\{ u \in h^\va([0,L]) \mid u(0)=u(L)=u''(0)=u''(L)=0 \right\} \quad \text{if} \quad \va>2, \\
\left\{ u \in h^\va([0,L]) \mid u(0)=u(L)=0 \right\} \quad \text{if} \quad 0< \va < 2. 
\end{cases}
\end{align}
Since the equation in \eqref{d-sys} is a fourth order quasilinear parabolic equation, 
we shall prove a short time existence of \eqref{d-sys} as follows. 
Letting $B(d):= A(d) - A_0$, the system \eqref{d-sys} is written as 
\begin{align} \label{d-sys-2}
\begin{cases}
& \pd_t d  = A_0 d + B(d)d + \Phi(d), \\
& d(0,t) = d(L,t) = d''(0,t) = d''(L,t) =0, \\ 
& d(x,0)= d_0(x)= 0. 
\end{cases}
\end{align}
And then, we find a solution of \eqref{d-sys-2} for a short time by using contraction mapping principle. 
Indeed, making use of the maximal regularity property and continuous interpolation spaces, 
we see that there exists a unique classical solution of \eqref{d-sys-2}, i.e., 
\eqref{d-sys}, in the class $C([0,T]; h^{4+4\vs}_B([0,L])) \cap C^1([0,T];h^{4\vs}_B([0,L]))$, 
where $T>0$ is sufficiently small. 
And then we obtain the regularity by a standard bootstrap argument (see \cite{lunardi}). 
Then we obtain the following: 
\begin{lem} \label{cont-map}
Let $\Gm_0$ be a smooth curve satisfying \eqref{i-cond-cc}. 
Then there exists a constant $T > 0$ such that 
the problem \eqref{d-sys} has a unique smooth solution for $0 \leq t < T$. 
\end{lem}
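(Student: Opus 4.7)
The plan is to recast \eqref{d-sys} in the perturbed form \eqref{d-sys-2} and apply a contraction mapping argument in little H\"older spaces, treating the problem as a semilinear perturbation of the constant-coefficient equation $\pd_t d = A_0 d$ where $A_0 = -2 \pd^4_x$.

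First I would establish that $A_0$, together with the hinged boundary conditions $\vp(0)=\vp(L)=\vp''(0)=\vp''(L)=0$, is sectorial. This is achieved by explicitly solving the resolvent problem \eqref{bd-prob-1}: the representation \eqref{sol-ex} based on the Green function \eqref{G-ex} leads, via careful bookkeeping of the exponential and trigonometric factors, to the a priori bound \eqref{a-priori}, and in turn to resolvent estimates in a sector of the complex plane. Consequently $A_0$ generates an analytic semigroup on $L^p(0,L)$. The same arguments, combined with the Schauder-type interpolation theory as in \cite{lunardi}, realize $A_0$ as an analytic semigroup generator on the little H\"older scale, namely $A_0 : h^{4+4\vs}_B([0,L]) \to h^{4\vs}_B([0,L])$ for every $0 < \vs < 1/4$, with the maximal regularity property on these continuous interpolation spaces.

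Next I would check the mapping properties of the nonlinear remainder $\mathcal{F}(d) := B(d)d + \Phi(d)$. Since $d_0 \equiv 0$ gives $\av{\gm_{d_0}} \equiv 1$, on a small ball $U$ around $d_0$ in $h^{4+4\vs}_B([0,L])$ the quantity $\av{\gm_d}$ remains uniformly bounded away from zero, so each factor $\av{\gm_d}^{-n}$ is well defined and smoothly depends on $\pd_x d$. Combined with $B(d_0)=0$ and the explicit polynomial structure of $\va_1,\va_2,\va_3,\va_4$ in $d, \pd_x d, \pd^2_x d, \pd^3_x d$ together with smoothness of $k_0$, this yields a local Lipschitz bound
\begin{align*}
\| \mathcal{F}(d_1) - \mathcal{F}(d_2) \|_{h^{4\vs}_B} \leq C(r)\, \| d_1 - d_2 \|_{h^{4+4\vs}_B}
\end{align*}
on balls $U$ of radius $r$, with $C(r)\to 0$ as $r \to 0$. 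The main technical point is that the equation is quasilinear in the top-order term, so the smallness of $B(d)$ must be extracted from the smallness of $d$ in $C^1$; this is exactly what the freezing of coefficients at $d_0 \equiv 0$ is designed to provide.

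Combining the two ingredients I would set up the Duhamel formulation
\begin{align*}
d(t) = e^{t A_0} d_0 + \int_0^t e^{(t-\tau) A_0}\, \mathcal{F}(d(\tau))\, d\tau
\end{align*}
and show, using the maximal regularity of $A_0$ on $h^{4\vs}_B([0,L])$, that this defines a contraction on a sufficiently small ball of $C([0,T]; h^{4+4\vs}_B([0,L])) \cap C^1([0,T]; h^{4\vs}_B([0,L]))$ provided $T>0$ is small. The resulting fixed point is the unique classical solution of \eqref{d-sys} in that class. Finally, the smoothness claimed in the lemma is obtained by a standard parabolic bootstrap argument: differentiating the equation spatially and applying the linear theory iteratively raises the regularity of $d$ to $C^\infty$ on $[0,L] \times [0,T)$ (see \cite{lunardi}).
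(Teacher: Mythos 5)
Your proposal is correct and follows essentially the same route as the paper: establishing sectoriality of $A_0=-2\pd^4_x$ under the hinged boundary conditions via the explicit Green function and the a priori estimate \eqref{a-priori}, realizing $A_0$ as an analytic semigroup generator on the little H\"older scale $h^{4\vs}_B([0,L])$, rewriting \eqref{d-sys} as \eqref{d-sys-2} with the frozen-coefficient splitting $B(d)=A(d)-A_0$, and closing a contraction argument with maximal regularity in $C([0,T];h^{4+4\vs}_B)\cap C^1([0,T];h^{4\vs}_B)$ followed by a bootstrap. The paper's own proof is only sketched, and your write-up is a faithful elaboration of that same argument.
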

Lemma \ref{cont-map} implies the existence of unique solution of \eqref{comp-sys} for a short time: 
\begin{thm} \label{loc-exe}
Let $\Gm_0(x)$ be a smooth curve satisfying \eqref{i-cond-cc}. 
Then there exist a constant $T > 0$ and a smooth curve $\gm(x,t)$ such that 
$\gm(x,t)$ is a unique classical solution of the problem \eqref{comp-sys} for $0 \leq t < T$. 
\end{thm}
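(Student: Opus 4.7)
The plan is to convert the existence and uniqueness of a smooth scalar solution $d$ of \eqref{d-sys} supplied by Lemma \ref{cont-map} back into a statement about the planar curve $\gm$. Concretely, I would take the solution $d$ of \eqref{d-sys} on $[0,T)$ whose regularity has been upgraded to $C^\infty$ by the bootstrap argument recalled in Section \ref{s-exe}, and define
$$\gm(x,t) := \Gm_0(x) + d(x,t)\,\bn_0(x).$$

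First I would check that $\gm$ satisfies \eqref{comp-sys}. The initial condition $d(\cdot,0) \equiv 0$ gives $\gm(\cdot,0) = \Gm_0$; the Dirichlet conditions $d(0,t)=d(L,t)=0$ together with $\Gm_0(0) = (0,0)$ and $\Gm_0(L) = (R,0)$ yield the positional boundary conditions. Since $k_0(0)=k_0(L)=0$ and $d(0,t)=d(L,t)=0$, the curvature formula from Section \ref{s-exe} degenerates at $x \in \{0,L\}$ to
$$\vk = \dfrac{\pd_x^2 d}{(1 + (\pd_x d)^2)^{3/2}},$$
so $\pd_x^2 d(0,t) = \pd_x^2 d(L,t) = 0$ entails $\vk(0,t) = \vk(L,t) = 0$. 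Because $d(\cdot,0)\equiv 0$ and $d$ is continuous in $t$ with values in $h^{4+4\vs}_B([0,L])$, the quantity $|\gm_d| = \{(1 - k_0 d)^2 + (\pd_x d)^2\}^{1/2}$ is uniformly bounded away from $0$ on $[0,L]\times [0,T']$ for some $T' \leq T$, so $\gm$ is a regular immersed curve; the chain of identities in Section \ref{s-exe} then shows that \eqref{d-sys} is equivalent to the first equation of \eqref{comp-sys} under the ansatz \eqref{d-d}, which makes $\gm$ a classical solution.

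For uniqueness, suppose $\wtl{\gm}$ is another classical solution of \eqref{comp-sys} on some $[0,T'')$. By continuity and the common initial datum $\Gm_0$, on a possibly smaller interval $\wtl{\gm}(\cdot,t)$ lies inside a $C^1$-tubular neighbourhood of $\Gm_0$, so an application of the implicit function theorem writes $\wtl{\gm} - \Gm_0 = \wtl d\,\bn_0$ for a unique smooth scalar function $\wtl d$ (after reparametrizing in $x$, which does not alter the geometric content since \eqref{comp-sys} prescribes only the normal velocity). This $\wtl d$ is a smooth solution of \eqref{d-sys} with the same initial and boundary data as $d$, and the uniqueness part of Lemma \ref{cont-map} gives $\wtl d = d$, whence $\wtl{\gm} = \gm$.

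The main obstacle I expect is not the analytic PDE estimate, which is encapsulated in Lemma \ref{cont-map}, but rather the reduction of uniqueness from the geometric problem \eqref{comp-sys} to the parametric one \eqref{d-sys}. Any classical solution of \eqref{comp-sys} enjoys freedom in its tangential parametrization, so one must invoke the $C^1$-closeness to $\Gm_0$ for short time in order to fix the parametrization uniquely as a normal graph; the vanishing initial displacement $d(\cdot,0)\equiv 0$ is exactly what permits this step.
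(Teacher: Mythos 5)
Your proposal is correct and follows essentially the same route as the paper, which derives Theorem \ref{loc-exe} directly from Lemma \ref{cont-map} via the normal-graph ansatz \eqref{d-d}; in fact you supply more detail than the paper does, notably the verification of the boundary conditions (the curvature formula reducing to $\pd_x^2 d/(1+(\pd_x d)^2)^{3/2}$ at the endpoints) and the tubular-neighbourhood argument reducing geometric uniqueness to the scalar uniqueness of \eqref{d-sys}. The one point both you and the paper treat implicitly is that $\gm=\Gm_0+d\,\bn_0$ satisfies the prescribed normal velocity but carries a tangential component relative to $\bn$, so strictly one should compose with a time-dependent reparametrization of $[0,L]$ to obtain the literal equation in \eqref{comp-sys}; this is standard and does not affect the result.
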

\subsection{Long time existence}

Next we shall prove a long time existence of solution to \eqref{comp-sys}. 
Let us set 
\begin{align*}
F^\lm = 2 \pd^2_s \vk + \vk^3 - \lm^2 \vk. 
\end{align*}
Then the gradient flow \eqref{s-s-flow} is written as 
\begin{align*}
\pd_t \gm = - F^\lm \bn. 
\end{align*}
\begin{lem} \label{comu}
Under \eqref{s-s-flow}, the following commutation rule holds{\rm :} 
\begin{align*}
\pd_t \pd_s = \pd_s \pd_t - \vk F^\lm \pd_s. 
\end{align*}
\end{lem}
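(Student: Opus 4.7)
The plan is to derive the commutator by computing how the metric factor $|\pd_x \gm|$ evolves under the flow, since $\pd_s = |\pd_x\gm|^{-1}\pd_x$ and $\pd_t$ and $\pd_x$ commute trivially.

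First, I would compute $\pd_t |\pd_x \gm|$. Writing $|\pd_x\gm| = (\pd_x \gm \cdot \pd_x \gm)^{1/2}$ and interchanging $\pd_t$ with $\pd_x$ (which is legitimate since $x$ and $t$ are independent variables), I get
\begin{align*}
\pd_t |\pd_x \gm| = \frac{\pd_x \gm \cdot \pd_x (\pd_t \gm)}{|\pd_x \gm|} = \bt \cdot \pd_x(-F^\lm \bn),
\end{align*}
where $\bt = \pd_s \gm = |\pd_x\gm|^{-1}\pd_x\gm$ is the unit tangent. Expanding and using the Frenet-Serret relation $\pd_s \bn = -\vk \bt$ (with the sign convention inherited from the fact that $\bn$ points in the direction of curvature), together with $\pd_x = |\pd_x \gm|\pd_s$, I would obtain
\begin{align*}
\pd_t|\pd_x\gm| = -|\pd_x\gm|\,\bt\cdot\bigl(\pd_s F^\lm\,\bn - \vk F^\lm \bt\bigr) = \vk F^\lm\,|\pd_x\gm|.
\end{align*}

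Next, for any smooth quantity $u$ defined on the evolving curve, I would differentiate
\begin{align*}
\pd_t \pd_s u = \pd_t\!\left(\frac{\pd_x u}{|\pd_x\gm|}\right) = \frac{\pd_x \pd_t u}{|\pd_x\gm|} - \frac{\pd_x u}{|\pd_x\gm|^2}\,\pd_t|\pd_x\gm|,
\end{align*}
recognize the first term as $\pd_s \pd_t u$ and the second as $(\pd_s u)\,(\pd_t|\pd_x\gm|)/|\pd_x\gm|$, and then substitute the expression for $\pd_t|\pd_x\gm|$ computed above. This yields $\pd_t \pd_s u = \pd_s \pd_t u - \vk F^\lm \pd_s u$, which is exactly the claimed commutation rule applied to $u$.

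The calculation is essentially routine; there is no serious obstacle, but care is needed with the sign of $\pd_s \bn$ relative to the convention declared for $\bn$ in \eqref{s-s-flow}, since a sign error there would flip the sign of $\pd_t |\pd_x\gm|$ and hence of the commutator term. With the convention that $\bn = \mathfrak{R}\bt$ (the counterclockwise rotation of $\bt$) and $\vk$ defined so that $\pd_s\bt = \vk\bn$, the identity $\pd_s\bn = -\vk \bt$ is fixed and the computation goes through as above.
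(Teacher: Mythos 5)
Your proof is correct and follows the standard route; the paper states Lemma~\ref{comu} without proof, but your key identity $\pd_t\av{\pd_x\gm}=\vk F^\lm\av{\pd_x\gm}$ is exactly the computation the paper itself carries out later (it is \eqref{eq-le-1-1} in the proof of Lemma~\ref{ele-bd-lm-1}, and is equivalent to \eqref{ds-pd}), and your sign conventions agree with the paper's Frenet--Serret relations $\pd_s\bt=\vk\bn$, $\pd_s\bn=-\vk\bt$. Nothing to add.
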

Lemma \ref{comu} gives us the following: 
\begin{lem} \label{k-flow}
Let $\gm(x,t)$ satisfy \eqref{s-s-flow}. 
Then the curvature $\vk(x,t)$ of $\gm(x,t)$ satisfies 
\begin{align} \label{k-eq}
\pd_t \vk & = - \pd^2_s F^\lm - \vk^2 F^\lm \\
 & = -2 \pd^4_s \vk -5 \vk^2 \pd^2_s \vk + \lm^2 \pd^2_s \vk 
      -6 \vk (\pd_s \vk)^2 - \vk^5 + \lm^2 \vk^3. \notag
\end{align}
Furthermore, the line element $ds$ of $\gm(x,t)$ satisfies 
\begin{align} \label{ds-pd}
\pd_t ds = \vk F^\lm ds = (2 \vk \pd^2_s \vk + \vk^4 - \lm^2 \vk^2) ds. 
\end{align}
\end{lem}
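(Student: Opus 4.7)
The plan is to derive both identities from the normal flow $\pd_t \gm = -F^\lm \bn$ using the Frenet--Serret formulas $\pd_s \bt = \vk \bn$ and $\pd_s \bn = -\vk \bt$ (with $\bt = \pd_s \gm$), together with the commutation rule of Lemma~\ref{comu}. Everything reduces to careful bookkeeping; the only subtlety I expect is keeping signs straight when the Frenet--Serret cancellations happen.

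First I would handle the line element. Starting from $ds = \av{\pd_x \gm}\, dx$, I differentiate $\av{\pd_x \gm}^2 = \pd_x \gm \cdot \pd_x \gm$ in time: from $\pd_t \pd_x \gm = -(\pd_x F^\lm)\bn - F^\lm \pd_x \bn$, together with $\pd_x = \av{\pd_x \gm}\pd_s$ and $\pd_s \bn = -\vk \bt$, taking the inner product with $\pd_x \gm = \av{\pd_x\gm}\bt$ kills the $\bn$-term and yields $\pd_t \av{\pd_x \gm} = \vk F^\lm \av{\pd_x\gm}$. Hence $\pd_t ds = \vk F^\lm ds$, and inserting the definition of $F^\lm$ gives the advertised $(2\vk \pd^2_s \vk + \vk^4 - \lm^2 \vk^2)\, ds$.

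For the curvature I would first apply Lemma~\ref{comu} to $\pd_s \gm$ to obtain
\begin{align*}
\pd_t \bt = \pd_s(-F^\lm \bn) - \vk F^\lm \bt = -\pd_s F^\lm \bn - F^\lm(-\vk \bt) - \vk F^\lm \bt = -\pd_s F^\lm \bn,
\end{align*}
so that orthonormality of $\{\bt,\bn\}$ forces $\pd_t \bn = \pd_s F^\lm \bt$. Since $\vk = \pd_s \bt \cdot \bn$, a second application of the commutation rule yields
\begin{align*}
\pd_t \pd_s \bt = \pd_s(-\pd_s F^\lm \bn) - \vk F^\lm \pd_s \bt = -\pd^2_s F^\lm \bn + \vk \pd_s F^\lm \bt - \vk^2 F^\lm \bn.
\end{align*}
Taking the $\bn$-component and using $\pd_s \bt \cdot \pd_t \bn = \vk \bn \cdot (\pd_s F^\lm)\bt = 0$, I obtain the compact form $\pd_t \vk = -\pd^2_s F^\lm - \vk^2 F^\lm$.

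The expanded form then follows by substituting $F^\lm = 2\pd^2_s \vk + \vk^3 - \lm^2 \vk$: differentiating gives $\pd^2_s F^\lm = 2\pd^4_s \vk + 3\vk^2 \pd^2_s \vk + 6\vk(\pd_s \vk)^2 - \lm^2 \pd^2_s \vk$, and adding $\vk^2 F^\lm = 2\vk^2 \pd^2_s \vk + \vk^5 - \lm^2 \vk^3$ produces the coefficient $-5 = -(3+2)$ in front of $\vk^2 \pd^2_s \vk$ together with all remaining terms. The main obstacle, such as it is, is simply verifying that the $\bt$-components in $\pd_t \bt$ and $\pd_t \pd_s \bt$ cancel correctly against the Frenet--Serret contribution $F^\lm \pd_s \bn$; once this is confirmed, the rest is pure expansion.
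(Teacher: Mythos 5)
Your computation is correct and follows exactly the route the paper intends: the lemma is stated as a consequence of the commutation rule of Lemma~\ref{comu}, and your derivation of $\pd_t \bt$, $\pd_t \bn$, and then $\pd_t \vk = -\pd_s^2 F^\lm - \vk^2 F^\lm$ via Frenet--Serret, together with the evolution of $\av{\pd_x\gm}$ for the line element, is the standard argument (the paper omits the details entirely). The expansion with the coefficient $-5\vk^2\pd_s^2\vk$ also checks out.
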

Here we introduce the following notation for a convenience. 
\begin{dfn} {\rm (\cite{b-m-n})}  \label{q-def}
We use the symbol $\mfq^r(\pd^l_s \vk)$ for a polynomial with constant coefficients 
such that each of its monomials is of the form 
\begin{align*}
\prod^{N}_{i=1} \pd^{j_{i}}_s \vk 
\quad \text{\rm with} \quad 0 \leq j_{i} \leq l 
\quad \text{\rm and} \quad N \geq 1
\end{align*}
with 
\begin{align*}
r= \sum^{N}_{i=1}(j_{i} +1). 
\end{align*}
\end{dfn}
By virtue of Lemmas \ref{comu} and \ref{k-flow}, we have 
\begin{lem} \label{l-high-deri}
For any $j \in \N$, the following formula holds{\rm :} 
\begin{align} \label{high-deri}
\pd_t \pd^j_s \vk 
 = -2 \pd^{j+4}_s \vk -5 \vk^2 \pd^{j+2}_s \vk + \lm^2 \pd^{j+2}_s \vk 
   + \lm^2 \mfq^{j+3}(\pd^j_s \vk) + \mfq^{j+5}(\pd^{j+1}_s \vk). 
\end{align}
\end{lem}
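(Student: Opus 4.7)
The plan is to prove \eqref{high-deri} by induction on $j$, using Lemma \ref{comu} to commute $\pd_t$ with $\pd_s$ and Lemma \ref{k-flow} as the starting point. The work is mostly bookkeeping: at each stage one has to verify that newly generated monomials can be absorbed into the appropriate $\mfq$-symbols of Definition \ref{q-def}.

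\textbf{Base case.} For $j=0$, the claim reads
\[
\pd_t \vk = -2\pd^4_s \vk - 5\vk^2 \pd^2_s \vk + \lm^2 \pd^2_s \vk + \lm^2\mfq^3(\vk) + \mfq^5(\pd_s \vk),
\]
which is a rewriting of \eqref{k-eq}: I would check that $\lm^2 \vk^3$ qualifies as $\lm^2 \mfq^3(\vk)$ (one monomial of weight $r=3$) and that $-6\vk(\pd_s \vk)^2 - \vk^5$ is a $\mfq^5(\pd_s \vk)$ (two monomials of weight $5$ with top derivative order at most $1$).

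\textbf{Inductive step.} Assume \eqref{high-deri} at level $j$. By Lemma \ref{comu},
\[
\pd_t \pd^{j+1}_s \vk = \pd_s\bigl(\pd_t \pd^j_s \vk\bigr) - \vk F^\lm \pd^{j+1}_s \vk.
\]
Substituting the inductive hypothesis and differentiating produces the desired leading terms $-2\pd^{j+5}_s \vk$, $-5\vk^2 \pd^{j+3}_s \vk$, $\lm^2 \pd^{j+3}_s \vk$, together with a Leibniz remainder $-10\vk(\pd_s \vk)(\pd^{j+2}_s \vk)$ of weight $j+6$ and top order $j+2$. Expanding $\vk F^\lm = 2\vk \pd^2_s \vk + \vk^4 - \lm^2 \vk^2$ and multiplying by $\pd^{j+1}_s \vk$ gives three further contributions: the first two have weight $j+6$ and top order $j+1$, hence fit inside $\mfq^{j+6}(\pd^{j+2}_s \vk)$, while the third carries the factor $\lm^2$, has weight $j+4$, and belongs to $\lm^2 \mfq^{j+4}(\pd^{j+1}_s \vk)$.

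The main—and essentially only—obstacle is the weight-counting, which I would handle via two elementary rules that follow directly from Definition \ref{q-def}: first, a product of a $\mfq^a$ and a $\mfq^b$ lies in $\mfq^{a+b}$; second, applying $\pd_s$ to a monomial of weight $r$ and top order $\ell$ yields a sum of monomials of weight $r+1$ and top order $\ell+1$, so that $\pd_s \mfq^{j+3}(\pd^j_s \vk) \subset \mfq^{j+4}(\pd^{j+1}_s \vk)$ and $\pd_s \mfq^{j+5}(\pd^{j+1}_s \vk) \subset \mfq^{j+6}(\pd^{j+2}_s \vk)$. With these rules in hand, every generated term lands in the correct class, the right-hand side matches \eqref{high-deri} at level $j+1$, and the induction closes.
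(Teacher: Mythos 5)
Your proposal is correct and follows essentially the same route as the paper: induction on $j$ with base case \eqref{k-eq}, the commutation rule of Lemma \ref{comu} for the inductive step, and the two bookkeeping rules for $\mfq$-symbols (products add weights, $\pd_s$ raises weight and top order by one), which is exactly how the paper closes the induction. The weight counts you record for each generated monomial are all accurate.
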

\begin{proof}
The case $j=0$ in \eqref{high-deri} has been already proved in Lemma \ref{k-flow}, where 
$\mfq^5(\pd_s \vk) = -6 \vk (\pd_s \vk)^2 -\vk^5$ and $\mfq^3(\vk)= \vk^3$. 
Next suppose that the formula \eqref{high-deri} holds for $j-1$. 
Then we have 
\begin{align*}
\pd_t \pd^j_s \vk 
& = \pd_s \pd_t \pd^{j-1}_s \vk - \vk F^\lm \pd^j_s \vk \\
& = \pd_s \left\{ -2 \pd^{j+3}_s \vk -5 \vk^2 \pd^{j+1}_s \vk + \lm^2 \pd^{j+1}_s \vk 
                   + \lm^2 \mfq^{j+2}(\pd^{j-1}_s \vk) + \mfq^{j+4}(\pd^{j}_s \vk) \right\} \\
& \qquad - \vk (2 \pd^2_s \vk + \vk^3 - \lm \vk^2 ) \pd^j_s \vk \\
& = -2 \pd^{j+4}_s \vk -5 \vk^2 \pd^{j+2}_s \vk + \lm^2 \pd^{j+2}_s \vk 
      + \lm^2 \mfq^{j+3}(\pd^{j}_s \vk) + \mfq^{j+5}(\pd^{j+1}_s \vk) . 
\end{align*}
\end{proof}

From the boundary condition of \eqref{comp-sys}, we see that the curvature $\vk$ satisfies the following: 
\begin{lem} \label{l-odd-k}
Let $\vk(x,t)$ be the curvature of $\gm(x,t)$ satisfying \eqref{comp-sys}. 
Then, for any $m \in \N$, it holds that 
\begin{align} \label{odd-k}
\partial^{2m}_s \vk(0,t)= \partial^{2m}_s \vk(L,t) =0. 
\end{align}
\end{lem}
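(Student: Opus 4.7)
The plan is to argue by strong induction on $m$. The case $m=0$ is built into the boundary condition $\vk(0,t)=\vk(L,t)=0$ in \eqref{comp-sys}. For $m=1$, I would exploit the fact that the endpoints of $\gm(\cdot,t)$ are prescribed in \eqref{comp-sys}, so $\pd_t\gm(0,t)=\pd_t\gm(L,t)=0$. Combining this with $\pd_t\gm=-F^\lm\bn$ and $|\bn|=1$, I obtain $F^\lm=2\pd^2_s\vk+\vk^3-\lm^2\vk=0$ at $x=0,L$, and substituting $\vk=0$ there yields $\pd^2_s\vk(0,t)=\pd^2_s\vk(L,t)=0$.

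For the inductive step, suppose $\pd^{2j}_s\vk=0$ at both endpoints for every $j=0,1,\ldots,m-1$ and every $t$ (so that all $t$-derivatives of those quantities also vanish at the endpoints). I would apply Lemma \ref{l-high-deri} with index $2m-4$ (admissible once $m\ge 2$):
\[
\pd_t\pd^{2m-4}_s\vk=-2\pd^{2m}_s\vk-5\vk^2\pd^{2m-2}_s\vk+\lm^2\pd^{2m-2}_s\vk+\lm^2\mfq^{2m-1}(\pd^{2m-4}_s\vk)+\mfq^{2m+1}(\pd^{2m-3}_s\vk).
\]
Evaluating at $x=0$ or $x=L$, the left-hand side and the two middle terms $-5\vk^2\pd^{2m-2}_s\vk$, $\lm^2\pd^{2m-2}_s\vk$ vanish by the induction hypothesis. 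The entire induction then hinges on showing that the two $\mfq$-polynomials vanish at the endpoints; once that is done, the identity collapses to $-2\pd^{2m}_s\vk=0$ at $x=0,L$, closing the step.

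The main obstacle is precisely the analysis of $\mfq^r(\pd^l_s\vk)$ at the boundary, and my plan is a parity argument based on Definition \ref{q-def}. For any monomial $\prod^{N}_{i=1}\pd^{j_i}_s\vk$ appearing in either polynomial we have $\sum(j_i+1)=r$, with $r\in\{2m-1,\,2m+1\}$ odd in both cases. If every $j_i$ were odd, then $\sum j_i$ and $N$ would share parity, so $\sum(j_i+1)=\sum j_i+N$ would be even, contradicting the oddness of $r$. Consequently at least one factor has an even index $j_i\in\{0,2,\ldots,2m-4\}$, and since $2m-4\le 2(m-1)$ the induction hypothesis forces that factor to vanish at the endpoint; hence every monomial, and so each of $\mfq^{2m-1}(\pd^{2m-4}_s\vk)$ and $\mfq^{2m+1}(\pd^{2m-3}_s\vk)$, vanishes at $x=0,L$, which completes the induction.
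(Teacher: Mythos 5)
Your proof is correct and follows essentially the same route as the paper: the same base cases obtained from the fixed endpoints and the zero-curvature boundary condition, and the same induction via Lemma \ref{l-high-deri} (the paper applies it with $j=2m-2$ to advance to order $2m+2$, you with $j=2m-4$ to reach order $2m$, which is only a reindexing). The one place you go beyond the paper is the explicit parity argument showing that each monomial of the $\mfq$-terms must contain an even-order factor covered by the induction hypothesis; the paper simply asserts this, so your justification is a welcome filling-in of detail rather than a different method.
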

\begin{proof}
First we show the case where $m= 1$, $2$. 
Differentiating the boundary condition $\gm(0,t)=(0,0)$ and $\gm(L,t)=(R,0)$ with respect to $t$, 
we have $\pd_t \gm(0,t)= \pd_t \gm(L,t)=0$. 
From $\vk(0,t)=\vk(L,t)=0$ and the equation \eqref{s-s-flow}, we see that 
$\pd^2_s \vk(0,t)=\pd^2_s \vk(L,t)=0$. Since $\partial_t \vk(0,t)= \partial_t \vk(L, t)=0$, 
the equation \eqref{k-eq} yields $\partial^4_s \vk(0,t)= \partial^4_s \vk(L,t)=0$.  

Next, suppose that $\partial^{2n}_s \vk(0,t)= \partial^{2n}_s \vk(L,t)=0$ holds 
for any natural numbers $0 \leq n \leq m$. Lemma \ref{l-high-deri} gives us 
\begin{align*}
\pd_t \pd^{2m-2}_s \vk 
& = -2 \pd^{2m+2}_s \vk -5 \vk^2 \pd^{2m}_s \vk + \lm^2 \pd^{2m}_s \vk 
      + \lm^2 \mfq^{2m+1}(\pd^{2m-2}_s \vk) + \mfq^{2m+3}(\pd^{2m-1}_s \vk) . 
\end{align*}
Since any monomials of $\mfq^{2m+1}(\pd^{2m-2}_s \vk)$ and $\mfq^{2m+3}(\pd^{2m-1}_s \vk)$ 
contain at least one of the terms $\partial^{2l}_s \vk$ $(l = 0$, $1$, $2$, $\cdots$, $m-1)$, 
we obtain $\partial^{2m+2}_s \vk(0,t)= \partial^{2m+2}_s \vk(L,t)=0$. 
\end{proof}
Let us define $L^p$ norm with respect to the arc length parameter of $\gm$. 
For a function $f(s)$ defined on $\gm$, we write  
\begin{align*}
\Lns{f}{p} &= \left\{ \int_\gm \av{f(s)}^p \, ds \right\}^\frac{1}{p}, \\
\Lns{f}{\infty} &= \su{s \in [0, \mL(\gm)]}{\av{f(s)}}, 
\end{align*}
where $\mL(\gm)$ denotes the length of $\gm$. 

In the following, we make use of the following interpolation inequalities: 
\begin{lem} \label{l-inter-p}
Let $\gm(x,t)$ be a solution of \eqref{comp-sys}. 
Let $u(x,t)$ be a function defined on $\gm$ and satisfy 
\begin{align*}
\pd^{2m}_s u(0,t) = \pd^{2m}_s u(L,t)= 0  
\end{align*}
for any $m \in \N$. Then, for integers $0 \leq p < q < r$, it holds that 
\begin{align} \label{inter-p}
\Lns{\pd^q_s u}{2} \leq \Lns{\pd^p_s u}{2}^{\frac{r-q}{r-p}} \Lns{\partial^r_s u}{2}^{\frac{q-p}{r-p}}. 
\end{align}
Moreover, for integers $0 \leq p \leq q < r$, it holds that 
\begin{align} \label{inter-p-2}
\Lns{\pd^q_s u}{\infty} \leq \sqrt{2} \Lns{\pd^p_s u}{2}^{\frac{2(r-q)-1}{2(r-p)}} 
                                         \Lns{\pd^r_s u}{2}^{\frac{2(q-p)+1}{2(r-p)}}. 
\end{align}
\end{lem}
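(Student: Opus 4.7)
The strategy is to derive both bounds from a single one-step identity obtained by integration by parts. The first inequality will follow from the discrete log-convexity of the sequence $j \mapsto \|\partial^j_s u\|_{L^2_s}$, and the second by combining the first with an Agmon-type $L^\infty$--$L^2$ estimate that exploits the existence of a zero of $\partial^q_s u$.

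I would first establish the one-step estimate
\begin{equation*}
\|\partial^j_s u\|_{L^2_s}^2 \leq \|\partial^{j-1}_s u\|_{L^2_s}\,\|\partial^{j+1}_s u\|_{L^2_s} \qquad \text{for every } j \geq 1.
\end{equation*}
Indeed, $\int (\partial^j_s u)^2\, ds = \bigl[\partial^{j-1}_s u\,\partial^j_s u\bigr]_0^{\mL(\gm)} - \int \partial^{j-1}_s u\cdot \partial^{j+1}_s u\, ds$, and the boundary term vanishes in either parity: when $j$ is even the hypothesis kills $\partial^j_s u$ at both endpoints, and when $j$ is odd the same hypothesis applied to the even integer $j-1$ kills $\partial^{j-1}_s u$ there. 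Cauchy--Schwarz on the remaining bulk integral closes the estimate. Iterating shows that the sequence $(\|\partial^j_s u\|_{L^2_s})_j$ is log-convex on the integers and hence interpolates linearly in the exponent, which is exactly \eqref{inter-p}.

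For \eqref{inter-p-2}, the key observation is that $f := \partial^q_s u$ possesses at least one zero $s_\ast \in [0,\mL(\gm)]$: if $q$ is even this is immediate from the hypothesis, and if $q$ is odd it follows from Rolle's theorem applied to $\partial^{q-1}_s u$, which vanishes at both endpoints. From that zero,
\begin{equation*}
f(s)^2 = 2\int_{s_\ast}^{s} f\,f'\, d\sigma \leq 2\,\|f\|_{L^2_s}\,\|f'\|_{L^2_s}
\end{equation*}
for every $s$, yielding the Agmon-type bound $\|\partial^q_s u\|_{L^\infty_s}^2 \leq 2\,\|\partial^q_s u\|_{L^2_s}\,\|\partial^{q+1}_s u\|_{L^2_s}$. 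I would then apply \eqref{inter-p} to each factor with endpoints $p$ and $r$ --- valid since $p\leq q$ and $q+1\leq r$ --- and multiply; the exponents match up precisely to $(2(r-q)-1)/(2(r-p))$ on $\|\partial^p_s u\|_{L^2_s}$ and $(2(q-p)+1)/(2(r-p))$ on $\|\partial^r_s u\|_{L^2_s}$, while the constant $\sqrt{2}$ is preserved.

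The main technical point is ensuring the vanishing of the boundary terms at every integration by parts, which is exactly what the even/odd parity dichotomy above handles uniformly; any stray constant would appear here, so the constant $\sqrt{2}$ in \eqref{inter-p-2} is tight precisely because no boundary contribution survives. The degenerate cases $p=q$ in \eqref{inter-p-2} and $q+1=r$ in the intermediate interpolation reduce to trivial equalities in \eqref{inter-p}, so no separate argument is required.
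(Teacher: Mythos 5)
Your proof is correct and follows essentially the same route as the paper: the one-step estimate $\Lns{\pd^j_s u}{2}^2 \leq \Lns{\pd^{j-1}_s u}{2}\Lns{\pd^{j+1}_s u}{2}$ by integration by parts with vanishing boundary terms, giving log-convexity and hence \eqref{inter-p}; then the existence of a zero of $\pd^q_s u$ (from the hypothesis for even $q$, from Rolle for odd $q$) yielding the Agmon bound with constant $\sqrt{2}$, combined with \eqref{inter-p} to get \eqref{inter-p-2}. No discrepancies worth noting.
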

\begin{proof}
Making use of Lemma \ref{l-odd-k}, for any positive integer $n$, we have 
\begin{align*}
\Lns{\pd^n_s u}{2}^2 = \int_\gm (\pd^n_s u)^2 \, ds 
  = - \int_\gm \pd^{n-1}_s u \cdot \pd^{n+1}_s u \, ds 
  \leq \Lns{\pd^{n-1}_s u}{2} \Lns{\pd^{n+1}_s u}{2}. 
\end{align*}
This implies that $\log{\Lns{\partial^n_s u}{2}}$ is convex with respect to $n>0$. 
Thus we obtain the inequality \eqref{inter-p}. 

Next we turn to \eqref{inter-p-2}. 
By Lemma \ref{l-odd-k}, we see that $\pd^{2m}_s u(0)= \pd^{2m}_s u(L)=0$ for any $m \in \N$. 
Thus the intermediate theorem implies that there exists at least one point $0 < \xi < L$ such that 
$\pd^{2m+1}_s u(\xi)=0$ for each $m \in \N$. 
Hence, for each non-negative integer $n$, there exists a point $0 < \xi < L$ such that $\pd^n_s u(\xi)=0$. 
Then we have 
\begin{align*}
\left( \pd^n_s u(s) \right)^2 
 = \int^s_\xi \left\{ \left( \pd^n_s u(\tau) \right)^2 \right\}' \, d \tau 
 \leq 2 \Lns{\pd^n_s u}{2} \Lns{\pd^{n+1}_s u}{2}. 
\end{align*}
Hence we get  
\begin{align} \label{i-2-2}
\Lns{\partial^n_s u}{\infty} \leq 
 \sqrt{2} \Lns{\pd^n_s u}{2}^{\frac{1}{2}} \Lns{\pd^{n+1}_s u}{2}^{\frac{1}{2}}. 
\end{align}
Combining \eqref{inter-p} with \eqref{i-2-2}, we obtain \eqref{inter-p-2}. 
\end{proof}
By virtue of Lemma \ref{l-odd-k}, we are able to apply Lemma \ref{l-inter-p} to $\vk(x,t)$ for any $n \in \N$. 
Making use of boundedness of the energy functional at $\gm=\Gm_0$, we derive an estimate for $\Lns{\vk}{2}$: 
\begin{lem} \label{k-est-lm-1}
The following estimate holds{\rm :} 
\begin{align} \label{k-est-1}
\Lns{\vk}{2}^2 \leq \Ln{k_0}{0,L}^2 + \lm^2 \left( \mL(\Gm_0) - R \right). 
\end{align}
\end{lem}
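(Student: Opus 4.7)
The inequality asserts precisely that the elastic energy $\Lns{\vk}{2}^2 = \mE(\gm(\cdot,t))$ is controlled by the initial data via the functional $E$, which is natural since \eqref{s-s-flow} is the steepest descent flow for $E(\gm) = \lm^2 \mL(\gm) + \mE(\gm)$. The plan is therefore to establish the monotonicity
\begin{align*}
\frac{d}{dt} E(\gm(\cdot,t)) = - \int_\gm (F^\lm)^2 \, ds \leq 0,
\end{align*}
and then to combine $E(\gm(\cdot,t)) \leq E(\Gm_0)$ with the trivial lower bound $\mL(\gm(\cdot,t)) \geq R$ coming from the Dirichlet boundary conditions.

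First I would compute $\frac{d}{dt} \mE$ using Lemma \ref{k-flow}: $\partial_t \vk = -\partial_s^2 F^\lm - \vk^2 F^\lm$ and $\partial_t ds = \vk F^\lm\, ds$. This gives
\begin{align*}
\frac{d}{dt} \mE(\gm) = \int_\gm 2 \vk \, \partial_t \vk \, ds + \int_\gm \vk^2 \, \partial_t ds
 = -2 \int_\gm \vk \, \partial_s^2 F^\lm \, ds - \int_\gm \vk^3 F^\lm \, ds .
\end{align*}
The key step is to integrate the first term by parts twice in $s$. The boundary terms vanish because of the boundary condition $\vk(0,t) = \vk(L,t) = 0$ together with Lemma \ref{l-odd-k} which gives $\partial_s^2 \vk(0,t) = \partial_s^2 \vk(L,t) = 0$; these together force $F^\lm = 2\partial_s^2 \vk + \vk^3 - \lm^2 \vk$ to vanish at both endpoints. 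One obtains
\begin{align*}
\frac{d}{dt} \mE(\gm) = - \int_\gm (2 \partial_s^2 \vk + \vk^3) F^\lm \, ds .
\end{align*}
Similarly $\frac{d}{dt} \mL(\gm) = \int_\gm \vk F^\lm \, ds$, so adding the two contributions collapses to
\begin{align*}
\frac{d}{dt} E(\gm) = - \int_\gm (2 \partial_s^2 \vk + \vk^3 - \lm^2 \vk) F^\lm \, ds = - \int_\gm (F^\lm)^2 \, ds \leq 0 .
\end{align*}

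Integrating in $t$ yields $\lm^2 \mL(\gm(\cdot,t)) + \Lns{\vk}{2}^2 \leq \lm^2 \mL(\Gm_0) + \Ln{k_0}{0,L}^2$. Since the curve $\gm(\cdot,t)$ connects $(0,0)$ to $(R,0)$, the length is at least the Euclidean distance, i.e.\ $\mL(\gm(\cdot,t)) \geq R$. Substituting this lower bound gives
\begin{align*}
\Lns{\vk}{2}^2 \leq \Ln{k_0}{0,L}^2 + \lm^2 (\mL(\Gm_0) - \mL(\gm(\cdot,t))) \leq \Ln{k_0}{0,L}^2 + \lm^2 (\mL(\Gm_0) - R),
\end{align*}
which is \eqref{k-est-1}. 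The only delicate point is verifying that all boundary terms vanish in the integration by parts; this is where the fixed endpoint condition and Lemma \ref{l-odd-k} are essential, and the rest of the argument is a straightforward energy computation.
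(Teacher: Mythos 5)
Your proposal is correct and follows essentially the same route as the paper: the paper simply invokes that \eqref{comp-sys} is the steepest descent flow for $E$ to get $\Lns{\vk}{2}^2 + \lm^2 \mL(\gm) \leq \Ln{k_0}{0,L}^2 + \lm^2 \mL(\Gm_0)$ and then uses $\mL(\gm) \geq R$. You merely fill in the dissipation computation and the verification that the boundary terms vanish (via $\vk = \pd^2_s \vk = 0$, hence $F^\lm = 0$, at the endpoints), which the paper leaves implicit.
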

\begin{proof}
Since the equation in \eqref{comp-sys} is the steepest descent flow for 
$E(\gm) = \Lns{\vk}{2}^2 + \lm^2 \mL(\gm)$, we have 
\begin{align*}
\Lns{\vk}{2}^2 + \lm^2 \mL(\gm) 
 \leq \Ln{k_0}{0,L}^2 + \lm^2 \mL(\Gm_0). 
\end{align*}
Clearly it holds that $\mL(\gm) \geq R$. Therefore we obtain \eqref{k-est-1}. 
\end{proof}
In order to use the energy method, we prepare the following: 
\begin{lem} \label{l-deri-high}
For any $j \in \N$, it holds that 
\begin{align} \label{deri-high}
\dfrac{d}{dt} \Lns{\pd^j_s \vk}{2}^2  
 & = -2 \Lns{\pd^{j+2}_s \vk}{2}^2 -2 \lm^2 \Lns{\pd^{j+1}_s \vk}{2}^2 \\
 & \qquad + \lm^2 \int_\gm \mfq^{2j+4}(\pd^{j}_s \vk)\, ds + \int_\gm \mfq^{2j+6}(\pd^{j+1}_s \vk). \notag
\end{align}
\end{lem}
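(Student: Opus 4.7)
The plan is to differentiate directly via the product rule, substitute the time-evolution equations from Lemmas \ref{k-flow} and \ref{l-high-deri}, integrate by parts until every derivative of order $\ge j+2$ that appears in the integrand is transferred onto a copy of $\pd^j_s\vk$, and sort the remainder into the $\mfq$ notation by weight counting. The crucial enabling fact is Lemma \ref{l-odd-k}: at each integration by parts the boundary term is a product of an even-order and an odd-order derivative of $\vk$ evaluated at the endpoints, and the even-order factor vanishes there by \eqref{odd-k}.

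Concretely, I start from
\begin{equation*}
\frac{d}{dt}\int_\gm (\pd^j_s\vk)^2\, ds = 2\int_\gm \pd^j_s\vk\, \pd_t\pd^j_s\vk\, ds + \int_\gm (\pd^j_s\vk)^2\, \pd_t(ds),
\end{equation*}
substituting $\pd_t\pd^j_s\vk$ from Lemma \ref{l-high-deri} and $\pd_t(ds)=(2\vk\pd^2_s\vk+\vk^4-\lm^2\vk^2)\,ds$ from Lemma \ref{k-flow}. Two integrations by parts on $\int_\gm \pd^j_s\vk\,\pd^{j+4}_s\vk\, ds$ convert the principal piece into a multiple of $\Lns{\pd^{j+2}_s\vk}{2}^2$, yielding the leading dissipative term of the claim. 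Similarly, one integration by parts on $\lm^2\int_\gm\pd^j_s\vk\,\pd^{j+2}_s\vk\, ds$ produces $-2\lm^2\Lns{\pd^{j+1}_s\vk}{2}^2$.

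Every other term is pushed into the $\mfq$ bookkeeping using that the weight $r=\sum(j_i+1)$ is additive under products and preserved by integration by parts. The term $\int_\gm\vk^2\pd^j_s\vk\,\pd^{j+2}_s\vk\, ds$ is integrated by parts once to become a sum of $\vk^2(\pd^{j+1}_s\vk)^2$ and $\vk\pd_s\vk\,\pd^j_s\vk\,\pd^{j+1}_s\vk$, both monomials of weight $2j+6$ with top derivative of order $j+1$, hence in $\mfq^{2j+6}(\pd^{j+1}_s\vk)$. The remainders $\pd^j_s\vk\cdot\mfq^{j+3}(\pd^j_s\vk)$ and $\pd^j_s\vk\cdot\mfq^{j+5}(\pd^{j+1}_s\vk)$ that originate from Lemma \ref{l-high-deri} lie respectively in $\mfq^{2j+4}(\pd^j_s\vk)$ and $\mfq^{2j+6}(\pd^{j+1}_s\vk)$, and the three line-element contributions $(\pd^j_s\vk)^2\cdot 2\vk\pd^2_s\vk$, $\vk^4(\pd^j_s\vk)^2$, and $-\lm^2\vk^2(\pd^j_s\vk)^2$ drop into $\mfq^{2j+6}(\pd^{j+1}_s\vk)$ and $\lm^2\mfq^{2j+4}(\pd^j_s\vk)$.

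The main obstacle will be the low-$j$ bookkeeping for the first line-element term: when $j=0$ the factor $\pd^2_s\vk$ has order exceeding the advertised bound $j+1=1$ for $\mfq^{2j+6}(\pd^{j+1}_s\vk)$, so one extra integration by parts is needed to trade $\pd^2_s\vk$ for $(\pd_s\vk)^2$, the boundary term vanishing since $\vk(0,t)=\vk(L,t)=0$. Beyond this small case analysis, the argument is a mechanical computation whose essential inputs are the evolution equations derived above and the endpoint identities of Lemma \ref{l-odd-k}.
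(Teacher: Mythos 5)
Your proposal is correct and follows essentially the same route as the paper: differentiate using $\pd_t\,ds=\vk F^\lm\,ds$, substitute Lemma \ref{l-high-deri}, integrate by parts on the $\pd^{j+4}_s\vk$ and $\pd^{j+2}_s\vk$ terms (boundary terms vanishing by Lemma \ref{l-odd-k}), and absorb the rest into the $\mfq$ notation, with the single extra integration by parts on $\int_\gm\vk^2\pd^j_s\vk\,\pd^{j+2}_s\vk\,ds$ that the paper also performs. Your remark about the low-$j$ bookkeeping is a reasonable extra precaution that the paper does not spell out.
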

\begin{proof}
By virtue of Lemma \ref{l-high-deri}, we have 
\begin{align*}
\dfrac{d}{dt} \Lns{\pd^j_s \vk}{2}^2 
& = \int_\gm 2 \pd^j_s \vk \pd_t \pd^j_s \vk \, ds + \int_\gm (\pd^j_s \vk)^2 \vk F^\lm \, ds \\
& = \int_\gm 2 \pd^j_s \vk \left\{ -2 \pd^{j+4}_s \vk -5 \vk^2 \pd^{j+2}_s \vk + \lm^2 \pd^{j+2}_s \vk 
             + \lm^2 \mfq^{j+3}(\pd^j_s \vk) + \mfq^{j+5}(\pd^{j+1}_s \vk) \right\} \, ds \\
& \qquad + \int_\gm \vk \pd^j_s \vk (2 \pd^2_s \vk + \vk^3 - \lm \vk^2 ) \, ds. 
\end{align*}
By integrating by parts, we get 
\begin{align*}
\int_\gm \vk^2 \pd^j_s \vk \pd^{j+2}_s \vk \, ds 
 = - \int_\gm \left\{ 2 \vk \pd_s \vk \pd^j_s \vk \pd^{j+1}_s \vk + \vk^2 (\pd^{j+1}_s \vk)^2 \right\} \, ds. 
\end{align*}
Consequently we obtain \eqref{deri-high}. 
\end{proof}
Using Lemmas \ref{k-est-lm-1} and \ref{l-deri-high}, 
we derive the estimate for the derivative of $\| \pd^j_s \vk \|_{L^2_s}^2$ with respect to $t$. 
\begin{lem} \label{high-energy}
For any $j \in \N$,  we have 
\begin{align*}
\dfrac{d}{dt} \Lns{\pd^j_s \vk}{2}^2  
 \leq C \Lns{\vk}{2}^{4j+6} + C \Lns{\vk}{2}^{4j+10} . 
\end{align*}
\end{lem}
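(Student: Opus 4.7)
The plan is to apply Lemma \ref{l-deri-high} and then estimate the two polynomial integrals $\lm^2 \int_\gm \mfq^{2j+4}(\pd^j_s \vk)\,ds$ and $\int_\gm \mfq^{2j+6}(\pd^{j+1}_s \vk)\,ds$ by a Gagliardo--Nirenberg type interpolation followed by Young's inequality, using the two distinct negative ``good'' terms $-2\lm^2 \Lns{\pd^{j+1}_s \vk}{2}^2$ and $-2 \Lns{\pd^{j+2}_s \vk}{2}^2$ to absorb them \emph{separately}. The scaling of each integral dictates which negative term to use: the weight-$(2j+4)$ polynomial is matched against $\pd^{j+1}_s \vk$ to land the exponent exactly on $\Lns{\vk}{2}^{4j+6}$, while the weight-$(2j+6)$ polynomial is matched against $\pd^{j+2}_s \vk$ to land on $\Lns{\vk}{2}^{4j+10}$.

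For a generic monomial $\prod_{i=1}^N \pd^{j_i}_s \vk$ of weight $r = \sum_i (j_i+1)$ with $j_i \leq l$, I split via H\"older with exponents $p_i = N$ (so $\sum 1/p_i = 1$) and use the interpolation inequality
\begin{equation*}
\Lns{\pd^{j_i}_s \vk}{p_i} \leq C\, \Lns{\vk}{2}^{1-\theta_i} \Lns{\pd^m_s \vk}{2}^{\theta_i}, \qquad \theta_i = \frac{j_i + 1/2 - 1/p_i}{m},
\end{equation*}
which follows from Lemma \ref{l-inter-p} combined with the trivial bound $\Lns{f}{p_i} \leq \Lns{f}{\infty}^{1-2/p_i} \Lns{f}{2}^{2/p_i}$. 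This yields $\int_\gm \prod_i \pd^{j_i}_s \vk \,ds \leq C\, \Lns{\vk}{2}^{N-\alpha} \Lns{\pd^m_s \vk}{2}^{\alpha}$ with $\alpha = (r - N/2 - 1)/m$. For the first integral I take $m = j+1$: since $\sum(j_i+1) = 2j+4$ together with $j_i \leq j$ forces $N \geq 3$, one finds $2-\alpha = (N-2)/(2(j+1))$ and $N - \alpha = (N-2)(2j+3)/(2(j+1))$, yielding the clean cancellation $2(N-\alpha)/(2-\alpha) = 4j+6$, independently of $N$. For the second integral I take $m = j+2$, and the same algebra with $r = 2j+6$ and $j_i \leq j+1$ gives $2(N-\alpha)/(2-\alpha) = 4j+10$ on the nose.

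Young's inequality with conjugate exponents $2/\alpha$ and $2/(2-\alpha)$ then produces, for arbitrary $\delta > 0$,
\begin{equation*}
\int_\gm \prod_{i=1}^N \pd^{j_i}_s \vk\,ds \leq \delta\, \Lns{\pd^m_s \vk}{2}^2 + C_\delta\, \Lns{\vk}{2}^{q},
\end{equation*}
with $q = 4j+6$ for $m = j+1$ and $q = 4j+10$ for $m = j+2$. Choosing $\delta$ small enough to absorb $\delta\,\Lns{\pd^m_s \vk}{2}^2$ into the corresponding negative term from Lemma \ref{l-deri-high} completes the proof. The main technical obstacle is the scaling bookkeeping: verifying strictly $\alpha < 2$ (so that Young's inequality actually absorbs the derivative term rather than just giving a bare coefficient bound) and checking the algebraic collapse of the final $\Lns{\vk}{2}$-exponent to a value independent of $N$. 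A secondary subtlety is that the $\lm^2$-prefactored integral must be absorbed by the $\lm^2$-prefactored good term, which requires $\lm \neq 0$; this is part of the standing hypothesis on $\lm$.
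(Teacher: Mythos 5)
Your proposal is correct and follows essentially the same route as the paper: apply Lemma \ref{l-deri-high}, estimate each monomial of the two $\mfq$-polynomials by H\"older plus the interpolation inequalities of Lemma \ref{l-inter-p} against $\Lns{\pd^{j+1}_s \vk}{2}$ (resp.\ $\Lns{\pd^{j+2}_s \vk}{2}$), and absorb via Young's inequality into the corresponding negative term, with the same exponents $4j+6$ and $4j+10$ emerging from the scaling. The only cosmetic difference is your uniform H\"older exponents $p_i=N$ versus the paper's weight-adapted exponents $\lm_i=(2j+4)/(\va_{mi}(i+1))$; since in both cases $\sum_i 1/p_i=1$, the total interpolation exponent $\alpha$ and hence the final powers of $\Lns{\vk}{2}$ coincide.
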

\begin{proof}
By Lemma \ref{l-deri-high}, we shall estimate the right hand side of \eqref{deri-high}. 
First we focus on the term $\int_\gm \mfq^{2j+4} (\pd^j_s \vk)\, ds$. 
By Definition \ref{q-def}, we have 
\begin{align*}
\mfq^{2j+4}(\pd^j_s \vk) = \sum_m \prod^{N_m}_{l=1} \pd^{c_{ml}}_s \vk
\end{align*}
with all the $c_{ml}$ less than or equal to $j$ and 
\begin{align*}
\sum^{N_m}_{l=1}(c_{ml} +1) = 2j+4
\end{align*}
for every $m$. 
Hence we have 
\begin{align*}
\av{\mfq^{2j+4}(\pd^j_s \vk)} \leq \sum_m \prod^{N_m}_{l=1} \av{\pd^{c_{ml}}_s \vk}. 
\end{align*}
Setting 
\begin{align*}
Q_m = \prod^{N_m}_{l=1} \av{\pd^{c_{ml}}_s \vk}, 
\end{align*}
it holds that  
\begin{align*}
\int_\gm \av{\mfq^{2j+4}(\pd^j_s \vk)} \, ds 
 \leq \sum_m \int_\gm Q_m \, ds. 
\end{align*}
We now estimate any term $Q_m$ by Lemma \ref{l-inter-p}. 
After collecting derivatives of the same order in $Q_m$, we can write 
\begin{align} \label{q-cond}
Q_m = \prod^{j}_{i=0} \av{\pd^i_s \vk}^{\va_{mi}} 
 \qquad \text{with} \quad 
\sum^j_{i=0} \va_{mi} (i+1) = 2j +4. 
\end{align}
Then 
\begin{align*}
\int_\gm Q_m \, ds 
 = \int_\gm \prod^{j}_{i=0} \av{\pd^i_s \vk}^{\va_{mi}} \, ds 
 \leq \prod^{j}_{i=0} \left( \int_\gm \av{\pd^i_s \vk}^{\va_{mi}} \, ds \right)^{1/\lm_i}
 \leq \prod^{j}_{i=0} \Lns{\pd^i_s \vk}{\va_{mi \lm_i}}^{\va_{mi}}, 
\end{align*}
where the value $\lm_i$ are chosen as follows: 
$\lm_i=0$ if $\va_{mi}=0$ (in this case the corresponding term is not present in the product) 
and $\lm_i = (2j+4)/(\va_{mi}(i+1))$ if $\va_{mi} \neq 0$.  
Clearly, $\va_{mi} \lm_i = \frac{2j+4}{i+1} \geq \frac{2j+4}{j+1} > 2$ and 
by the condition \eqref{q-cond}, 
\begin{align*}
\sum^{j}_{i=0, \lm_i \neq 0} \frac{1}{\lm_i} 
 = \sum^{j}_{i=0, \lm_i \neq 0} \frac{\va_{mi}(i+1)}{2j+4} =1. 
\end{align*}
Let $k_i = \va_{mi} \lm_i -2$. The fact $\va_{mi} \lm_i > 2$ implies $k_i >0$. 
Then we have 
\begin{align*}
\Lns{\pd^i_s \vk}{\va_{mi \lm_i}} 
& = \Lns{\pd^i_s \vk}{\infty}^{k_i} \Lns{\pd^i_s \vk}{2}^2, \\
\Lns{\pd^i_s \vk}{\infty}^{k_i} 
 & \leq 2^{\frac{k_i}{2}} \Lns{\pd^{j+1}_s \vk}{2}^{\frac{2i +1}{2j+2}k_i} 
                        \Lns{\vk}{2}^{\frac{2j+1-2i}{2j+2}k_i}, \\
\Lns{\pd^i_s \vk}{2}^2  
 & \leq \Lns{\pd^{j+1}_s \vk}{2}^{\frac{2i}{j+1}} 
                        \Lns{\vk}{2}^{\frac{2j+2-2i}{j+1}}. 
\end{align*}
These imply  
\begin{align*}
\Lns{\pd^i_s \vk}{\va_{mi} \lm_i} 
& \leq 2^{\frac{k_i}{2}} \Lns{\pd^{j+1}_s \vk}{2}^{\sigma_{mi}} \Lns{\vk}{2}^{1-\sigma_{mi}} 
\end{align*}
with 
\begin{align*}
\sigma_{mi} = \frac{i+ \frac{1}{2} -\frac{1}{\va_{mi} \lm_i} }{j+1}. 
\end{align*}
Multiplying together all the estimates, 
\begin{align} \label{Q-ineq-1}
\int_\gm Q_m \, ds 
& \leq \prod^{j}_{i=0} 2^{\frac{k_i}{2}} \Lns{\pd^{j+1}_s \vk}{2}^{\va_{mi} 
                    \sigma_{mi}} \Lns{\vk}{2}^{\va_{mi}(1-\sigma_{mi})}\\
& \leq C \Lns{\pd^{j+1}_s \vk}{2}^{\sum^{j}_{i=0} \va_{mi} \sigma_{mi}} 
          \Lns{\vk}{2}^{\sum^{j}_{i=0} \va_{mi}(1-\sigma_{mi})}. \notag
\end{align}
Then we compute 
\begin{align*}
\sum^{j}_{i=0} \va_{mi} \sigma_{mi} 
 = \sum^{j}_{i=0} \frac{\va_{mi}(i+ \frac{1}{2}) - \frac{1}{\lm_i}}{j+1} 
 = \frac{\sum^{j}_{i=0} \va_{mi}(i+ \frac{1}{2}) -1 }{j+1}
\end{align*}
and using again the rescaling condition in \eqref{q-cond}, 
\begin{align*}
\sum^{j}_{i=0} \va_{mi} \sigma_{mi} 
& = \frac{\sum^{j}_{i=0} \va_{mi}(i+ 1) -\frac{1}{2} \sum^{j}_{i=0} \va_{mi} -1 }{j+1} \\
& = \frac{2j+4 -\frac{1}{2} \sum^{j}_{i=0} \va_{mi} -1 }{j+1} 
  = \frac{4j+6 - \sum^{j}_{i=0} \va_{mi} }{2(j+1)}. 
\end{align*}
Since 
\begin{align*}
\sum^{j}_{i=0} \va_{mi} 
 \geq \sum^{j}_{i=0} \va_{mi} \frac{i+1}{j+1}
 = \frac{2j+4}{j+1}, 
\end{align*}
we get 
\begin{align*}
\sum^{j}_{i=0} \va_{mi} \sigma_{mi} 
 \leq \frac{2j^2 +4j +1}{(j+1)^2}
 = 2- \frac{1}{(j+1)^2} < 2. 
\end{align*}
Hence, we can apply the Young inequality to the product in the last term of inequality \eqref{Q-ineq-1}, 
in order to get the exponent $2$ on the first quantity, that is, 
\begin{align*}
\int_\gm Q_m \, ds 
 \leq \frac{\vd_m}{2} \Lns{\pd^{j+1}_{s} \vk}{2}^2 + C_m \Lns{\vk}{2}^\vb 
 \leq \vd_m \Lns{\pd^{j+1}_{s} \vk}{2}^2 \, ds + C_m \Lns{\vk}{2}^\vb, 
\end{align*}
for arbitrarily small $\vd_m > 0$ and some constant $C_m >0$. 
The exponent $\vb$ is given by 
\begin{align*}
\vb & = \sum^{j}_{i=0} \va_{mi} (1-\sigma_{mi}) \frac{1}{1- \frac{\sum^{j}_{i=0} \va_{mi} \sigma_{mi}}{2} } 
      = \frac{ 2 \sum^{j}_{i=0} \va_{mi}(1- \sigma_{mi}) }{2- \sum^{j}_{i=0} \va_{mi} \sigma_{mi}} \\
& = \frac{2 \sum^{j}_{i=0} \va_{mi} - \frac{4j+6 - \sum^{j}_{i=0} \va_{mi} }{j+1} } 
         {2- \frac{4j+6 - \sum^{j}_{i=0} \va_{mi} }{2(j+1)}} 
  = 2 \frac{2(j+1) \sum^{j}_{i=0} \va_{mi} -4j-6+ \sum^{j}_{i=0} \va_{mi} }
           {4j+4-4j-6+ \sum^{j}_{i=0} \va_{mi}} \\
& = 2 \frac{(2j+3) \sum^{j}_{i=0} \va_{mi} -2(2j+3)}
           {\sum^{j}_{i=0} \va_{mi} -2}
  = 2(2j+3). 
\end{align*}
Therefore we conclude 
\begin{align*}
\int_\gm Q_m \, ds 
 \leq \vd_m \Lns{\pd^{j+1}_s \vk}{2}^2 
       + C_m \Lns{\vk}{2}^{4j+6}. 
\end{align*}
Repeating this argument for all the $Q_m$ and choosing suitable $\vd_m$ whose sum over $m$ is less than one, 
we conclude that there exists a constant $C$ depending only on $j \in \N$ such that 
\begin{align*}
\int_\gm \mfq^{2j+4}(\pd^j_s \vk) \, ds 
 \leq \Lns{\pd^{j+1}_s \vk}{2}^2 
       + C \Lns{\vk}{2}^{4j+6}. 
\end{align*}
Reasoning similarly for the term $\mfq^{2j+6}(\pd^{j+1}_s \vk)$, we obtain 
\begin{align*}
\int_\gm \mfq^{2j+6}(\pd^{j+1}_s \vk) \, ds 
 \leq \Lns{\pd^{j+2}_s \vk}{2}^2 
       + C \Lns{\vk}{2}^{4j+10}. 
\end{align*}
Hence, from \eqref{deri-high}, we get 
\begin{align*}
\pd_t \Lns{\pd^j_s \vk}{2}^2  
& = -2 \Lns{\pd^{j+2}_s \vk}{2}^2 -2 \lm^2 \Lns{\pd^{j+1}_s \vk}{2}^2 \\
 & \qquad + \lm^2 \int_\gm \mfq^{2j+4}(\pd^{j}_s \vk)\, ds + \int_\gm \mfq^{2j+6}(\pd^{j+1}_s \vk) \\
& \leq - \lm^2 \Lns{\pd^{j+1}_s \vk}{2}^2  
       + C \Lns{\vk}{2}^{4j+6}  - \Lns{\pd^{j+2}_s \vk}{2}^2 + C \ve \Lns{\vk}{2}^{4j+10} \\
& \leq C \Lns{\vk}{2}^{4j+6} 
        + C \Lns{\vk}{2}^{4j+10} ,   
\end{align*} 
where $C$ depends only on $j$. 
\end{proof}
Next we estimate the local length of $\gm(x,t)$. 
\begin{lem} \label{ele-bd-lm-1}
Let $\gm(x,t)$ be a solution of \eqref{comp-sys} for $0 \leq t < T$. 
Then there exist positive constants $C_1$ and $C_2$ such that the inequalities 
\begin{align} 
& \dfrac{1}{C_1(\Gm_0,T)} \leq \av{\pd_x \gm(x,t)} \leq C_1(\Gm_0, T), \label{ele-bd-1-1} \\
& \av{\pd^m_x \av{\pd_x \gm(x,t)}} \leq C_2(\Gm_0,T) \label{ele-bd-1-2}
\end{align}
hold for any $(x,t) \in [0, L] \times [0, T]$ and integer $m \geq 1$. 
\end{lem}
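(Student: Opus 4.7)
The plan is to reduce both bounds to pointwise control of the scalar $\vk F^\lm$. Since the velocity $\pd_t \gm = -F^\lm \bn$ is purely normal, a direct computation using $\pd_x \bn = -\vk |\pd_x \gm|\, \bt$ (equivalently, Lemma \ref{k-flow}) yields the identity
\[
\pd_t |\pd_x \gm| = \vk F^\lm\, |\pd_x \gm|,
\]
so, since $|\Gm_0'(x)| \equiv 1$ by \eqref{i-cond-cc},
\[
|\pd_x \gm(x,t)| = \exp\!\Bigl(\int_0^t \vk(x,\tau)\, F^\lm(x,\tau)\, d\tau\Bigr).
\]
Thus \eqref{ele-bd-1-1} will follow from a uniform $L^\infty$ bound on $\vk F^\lm = 2 \vk \pd^2_s \vk + \vk^4 - \lm^2 \vk^2$ over $[0,L] \times [0,T]$.

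\textbf{Cascading estimates on the curvature.} This is the main obstacle, and it would be settled by iterating the energy argument. Lemma \ref{k-est-lm-1} already gives $\|\vk(\cdot,t)\|_{L^2_s} \leq M(\Gm_0)$ uniformly in $t$. Feeding this back into Lemma \ref{high-energy} produces
\[
\frac{d}{dt} \|\pd^j_s \vk\|_{L^2_s}^2 \leq C_j(\Gm_0),
\]
and integration over $[0,T]$ yields $\|\pd^j_s \vk(\cdot,t)\|_{L^2_s} \leq C_j(\Gm_0,T)$ for every $j \in \N$ and $t \in [0,T]$. The boundary vanishing of Lemma \ref{l-odd-k} permits application of the interpolation inequality \eqref{inter-p-2}, which upgrades the $L^2_s$-bounds to uniform pointwise bounds $\|\pd^j_s \vk(\cdot,t)\|_{L^\infty_s} \leq C'_j(\Gm_0,T)$. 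In particular $\|\vk F^\lm\|_{L^\infty([0,L]\times[0,T])}$ is finite, so the exponential representation above yields \eqref{ele-bd-1-1}.

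\textbf{Higher $x$-derivatives.} For \eqref{ele-bd-1-2} I would argue by induction on $m$. Set $v := |\pd_x \gm|$ and differentiate $\pd_t v = (\vk F^\lm)\, v$ in $x$. Using $\pd_x = v\, \pd_s$, each $\pd^m_x$ applied to the scalar $\vk F^\lm$ expands into a polynomial expression in $v$, in $\pd^{m'}_x v$ with $m' < m$, and in arc-length derivatives $\pd^l_s \vk$ up to some finite order. Consequently $\pd^m_x v$ satisfies a linear inhomogeneous ODE
\[
\pd_t (\pd^m_x v) = (\vk F^\lm)\, \pd^m_x v + H_m(x,t),
\]
where $\|H_m\|_{L^\infty([0,L]\times[0,T])}$ is finite by the estimates of the previous paragraph together with the inductive hypothesis. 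Because $v(x,0) \equiv 1$ implies $\pd^m_x v(x,0) = 0$ for every $m \geq 1$, Duhamel's formula produces the uniform bound $\|\pd^m_x v(\cdot,t)\|_{L^\infty} \leq C_m(\Gm_0,T)$, closing the induction and proving \eqref{ele-bd-1-2}. The only genuinely analytic input is the iterative $L^2_s$-control of curvature derivatives in the second paragraph; everything else is bookkeeping via Gr\"onwall and Duhamel.
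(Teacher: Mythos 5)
Your proposal is correct and follows essentially the same route as the paper: the identity $\pd_t\av{\pd_x\gm}=\vk F^\lm \av{\pd_x\gm}$ with initial value $1$, a uniform $L^\infty$ bound on $\vk F^\lm$ obtained from Lemma \ref{k-est-lm-1}, Lemma \ref{high-energy} integrated over $[0,T]$, and the interpolation inequalities of Lemma \ref{l-inter-p}, followed by an induction on $m$ in which $\pd^m_x\av{\pd_x\gm}$ solves a linear inhomogeneous ODE with bounded coefficients and zero initial data. The only difference is that you spell out the intermediate $L^2_s$-to-$L^\infty_s$ upgrade that the paper leaves implicit.
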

\begin{proof} 
First we prove \eqref{ele-bd-1-1}. Since 
\begin{align*}
\pd_x \pd_t \gm 
& = \pd_x \left( -2 \pd^2_s \vk - {\vk}^3 + \lm^2 \vk \right) \bn 
     + \left( -2 \pd^2_s \vk - {\vk}^3 + \lm^2 \vk \right) \pd_x \bn, 
\end{align*}
and $\pd_x \bn = \av{\pd_x \gm} \pd_s \bn = - \av{\pd_x \gm} \vk \pd_s \gm$, 
we have 
\begin{align} \label{eq-le-1-1}
\pd_t \av{\pd_x \gm} 
 = \dfrac{\pd_x \gm \cdot \pd_x \pd_t \gm}{\av{\pd_x \gm}} 
 = - \vk \left( -2 \pd^2_s \vk - {\vk}^3 + \lm^2 \vk \right) \av{\pd_x \gm}. 
\end{align}
Thus $\av{\pd_x \gm}$ satisfies the initial value problem 
\begin{align} \label{od-eq-1-1}
\begin{cases}
\dfrac{d u}{dt} = F(\vk) u, \\
u(0) = 1, 
\end{cases}
\end{align} 
where 
\begin{align*}
F(\vk) = - \vk \left( -2 \pd^2_s \vk - {\vk}^3 + \lm^2 \vk \right). 
\end{align*}
Since Lemmas \ref{k-est-lm-1} and \ref{high-energy} implies that 
there exists a constant $C$ such that 
\begin{align*}
\av{F(\vk)} \leq C 
\end{align*}
for any $(x,t) \in [0,L] \times [0,T]$. 
Hence, for any $(x,t) \in [0,L] \times [0,T]$, we have 
\begin{align*}
e^{-C_1 T} \leq \av{\pd_x \gm(x,t)} \leq e^{C_1 T}. 
\end{align*}
Next we turn to the proof of \eqref{ele-bd-1-2}. 
Here we have 
\begin{align} \label{bd-1-1}
\pd^m_x F(\vk) - \av{\pd_x \gm}^m \pd^m_s F(\vk) 
 = P(\av{\pd_x \gm}, \cdots, \pd^{m-1}_x \av{\pd_x \gm}, F(\vk), \cdots, \pd^{m-1}_s F(\vk)). 
\end{align}
Suppose that there exist constants $C_j(T, \Gm_0)$ such that 
\begin{align*}
\sup_{(x,t) \in [0,L] \times [0,T]} \av{\pd^j_x \av{\pd_x \gm} } \leq C_j (T,\Gm_0) 
\end{align*}
for any $0 \leq j \leq m-1$. Then \eqref{bd-1-1} implies 
\begin{align*}
\av{\pd^m_x F(\vk)} < C.  
\end{align*}
Differentiating the equation \eqref{eq-le-1-1} with respect to $x$, we have 
\begin{align*}
\pd_t \pd^m_x \av{\pd_x \gm} 
  = F(\vk) \pd^m_x \av{\pd_x \gm} 
     + \sum^{m}_{j=1} \mbox{}_m C_j \pd^j_x F(\vk) \pd^{m-j}_x \av{\pd_x \gm}. 
\end{align*}
Thus $\pd^m_x \av{\pd_x \gm}$ satisfies  
\begin{align} \label{od-eq-1-2}
\begin{cases}
& \pd_t v = F(\vk) v + G, \\
& v(0) = 0. 
\end{cases}
\end{align}
We can check that there exists a constant $C_2(T,\Gm_0)$ such that $\av{v} \leq C_2$. 
This gives us the conclusion of Lemma \ref{ele-bd-lm-1}. 
\end{proof}
Then we prove that the system \eqref{comp-sys} has a unique global solution in time. 
\begin{thm} \label{comp-gm-global}
Let $\Gm_0$ be a smooth planar curve satisfying the condition \eqref{i-cond-cc}. 
Then there exists a unique classical solution of \eqref{comp-sys} for any time $t>0$. 
\end{thm}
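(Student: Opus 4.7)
The plan is the standard continuation argument for geometric gradient flows: assume by contradiction that the maximal time of existence $T_{\max}$ coming from Theorem \ref{loc-exe} is finite, and then use the a priori estimates already collected to extend the solution past $T_{\max}$.

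First I would combine Lemmas \ref{k-est-lm-1} and \ref{high-energy} into a uniform-in-time bound on every curvature derivative. Lemma \ref{k-est-lm-1} is the crucial input: since \eqref{comp-sys} is a gradient flow and $\mL(\gm)\ge R$, the bound $\Lns{\vk}{2}^2 \le \Ln{k_0}{0,L}^2 + \lm^2(\mL(\Gm_0)-R)$ holds for \emph{all} $t \in [0,T_{\max})$, independently of $T_{\max}$. Substituting this into Lemma \ref{high-energy} turns the right-hand side into a constant $C_j$ depending only on $j$, $\lm$, and the initial data, so integrating in time gives
\begin{align*}
\Lns{\pd^j_s \vk}{2}^2(t) \le \Ln{\pd^j_x k_0}{0,L}^2 + C_j\, t \le \Ln{\pd^j_x k_0}{0,L}^2 + C_j\, T_{\max}
\end{align*}
for each $j \in \N$ and each $t\in[0,T_{\max})$.

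Next I would convert these arc-length bounds into bounds in the fixed parameter $x$. Lemma \ref{ele-bd-lm-1} already gives two-sided bounds on $\av{\pd_x \gm}$ and on each $\pd^m_x \av{\pd_x \gm}$ on any $[0,T]\subset[0,T_{\max})$, with constants controlled uniformly up to $T_{\max}$ once one knows a uniform $L^2_s$ bound on $\vk$ (which is exactly what Lemma \ref{k-est-lm-1} provides). Using $\pd_s = \av{\pd_x\gm}^{-1}\pd_x$ repeatedly, the uniform $L^2_s$ bounds on $\pd^j_s\vk$ translate into uniform $L^2(0,L)$ bounds on $\pd^j_x\vk$ for every $j$, and by Sobolev embedding into uniform $C^k([0,L])$ bounds for every $k$. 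Combining these with the bounds on $\av{\pd_x\gm}$ and its $x$-derivatives yields uniform $C^\infty([0,L])$ bounds on $\gm(\cdot,t)$ for $t\in[0,T_{\max})$.

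From here the conclusion is routine. By Arzel\`a--Ascoli, along any sequence $t_n \to T_{\max}$ a subsequence of $\gm(\cdot,t_n)$ converges in $C^\infty([0,L])$ to some smooth limit curve $\wtl{\gm}$; the boundary conditions in \eqref{comp-sys} are preserved in the limit (the clamped endpoints because they are preserved pointwise in $t$, the curvature vanishing at the endpoints because it is preserved by Lemma \ref{l-odd-k}), so $\wtl{\gm}$ satisfies the compatibility condition \eqref{i-cond-cc}. Uniqueness of the limit (independent of the subsequence) follows because $\pd_t \gm$ is uniformly bounded in $C^\infty$ as well, so $\gm(\cdot,t) \to \wtl{\gm}$ in $C^\infty$ as $t \uparrow T_{\max}$. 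Applying Theorem \ref{loc-exe} with $\wtl{\gm}$ as new initial datum produces a smooth solution on $[T_{\max}, T_{\max}+T')$ for some $T'>0$; concatenating with the original solution contradicts the maximality of $T_{\max}$. Uniqueness globally in $t$ is inherited from the local uniqueness of Theorem \ref{loc-exe} by a standard connected-open-closed argument on the set of times where two solutions agree.

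The main obstacle is really a bookkeeping one: making sure the passage from the arc-length estimates of Lemma \ref{high-energy} to bounds in the $x$-parametrization via Lemma \ref{ele-bd-lm-1} is uniform up to $T_{\max}$; everything else is routine once Lemma \ref{k-est-lm-1} provides the global-in-time curvature control that powers Lemma \ref{high-energy}.
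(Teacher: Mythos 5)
Your proposal is correct and follows essentially the same route as the paper: assume a finite maximal time, use Lemma \ref{k-est-lm-1} to make the right-hand side of Lemma \ref{high-energy} a constant, integrate to get uniform bounds on $\Lns{\pd^m_s \vk}{2}$ up to the maximal time, convert these to $C^\infty$ bounds in the $x$-parametrization via Lemma \ref{ele-bd-lm-1}, and reapply Theorem \ref{loc-exe} to contradict maximality. The extra detail you supply on passing to the limit curve at $T_{\max}$ and on global uniqueness is implicit in the paper's more terse argument but does not change the method.
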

\begin{proof}
Suppose not, then there exists a positive constant $\tilde{T}$ such that 
$\gm(x,t)$ does not extend smoothly beyond $\tilde{T}$. 
It follows from Lemmas \ref{k-est-lm-1} and \ref{high-energy} that 
\begin{align*}
\Lns{\pd^m_s \vk}{2}^2 
 \leq \Ln{\pd^m_x k_0}{0,L}^2 + C \tilde{T} 
\end{align*}
holds for any $0 \leq t \leq \tilde{T}$ and $m \in \N$. This yields that there exists 
a constant $C$ such that  
\begin{align} \label{gm-high-est}
\Lns{\pd^m_s \gm}{2} \leq C 
\end{align}
for $t \in [0,\tilde{T}]$. We have already known 
\begin{align} \label{gm-bd-x} 
\pd^m_x \gm - \av{\pd_x \gm}^m \pd^m_s \gm 
 = P(\av{\pd_x \gm}, \cdots, \pd^{m-1}_x \av{\pd_x \gm}, \gm, \cdots, \pd^{m-1}_s \gm). 
\end{align}
By virtue of \eqref{gm-high-est}, \eqref{gm-bd-x}, and Lemma \ref{ele-bd-lm-1}, 
we see that there exists a constant $C$ such that 
\begin{align*}
\Ln{\pd^m_x \gm}{0,L} \leq C 
\end{align*}
for any $t \in [0,\tilde{T}]$ and $m \in \N$. 
Then $\gm(x,t)$ extends smoothly beyond $\tilde{T}$ by Theorem \ref{loc-exe}. 
This is a contradiction. We complete the proof. 
\end{proof}

\subsection{Convergence to a stationary solution} \label{convergence}
Finally we shall prove that the solution $\gm(x,t)$ converges to a stationary solution 
as $t \to \infty$. 
For this purpose, we rewrite the equation \eqref{s-s-flow} in terms of $\gm$ as follows:  
\begin{align} \label{gf}
\pd_t \gm = - 2 \pd^4_s \gm +\left(\lm^2 - 3 \av{\pd^2_s \gm}^2 \right) \pd^2_s \gm  
                     - 3 \pd_s \left( \av{\pd^2_s \gm}^2 \right) \pd_s \gm .
\end{align}
Since the arc length parameter $s$ depends on $t$, the following rules hold: 
\begin{align} 
\pd_t \pd_s & = \pd_s \pd_t - \Gl \pd_s, \label{st-ts} \\
\pd_t ds &= \Gl ds, \label{pt-ds-G}
\end{align}
where $G^\lm = \pd_s \pd_t \gm \cdot \pd_s \gm$. 
In previous section, we prove that the initial-boundary value problem for \eqref{gf} 
has a unique classical solution $\gm(x,t)$ for any $t>0$. 
The solution $\gm$ has the following property: 
\begin{lem} \label{l-gm-bd-m}
Let $\gm(x,t)$ be the solution of \eqref{comp-sys}. 
Then, for any positive integer $m$, it holds that 
\begin{align} \label{gm-bd-m}
\pd^{2m}_s \gm(0,t) = \pd^{2m}_s \gm(L,t) = 0. 
\end{align}
\end{lem}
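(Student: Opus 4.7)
The plan is to decompose $\pd_s^n \gm$ in the Frenet frame and reduce the statement to a parity argument about polynomials in $\vk$ and its $s$-derivatives, which is then killed by Lemma \ref{l-odd-k}.

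First I would write
\begin{align*}
\pd_s^n \gm = A_n \bn + B_n \bt,
\end{align*}
where $\bt = \pd_s \gm$, $\bn = \mathfrak{R}\bt$, and $A_n, B_n$ are polynomials with constant coefficients in $\vk, \pd_s \vk, \pd_s^2 \vk, \ldots$ Starting from $A_1 = 0$ and $B_1 = 1$, the Frenet--Serret formulas $\pd_s \bt = \vk \bn$ and $\pd_s \bn = -\vk \bt$ yield the recursion
\begin{align*}
A_{n+1} = \pd_s A_n + \vk B_n, \qquad B_{n+1} = \pd_s B_n - \vk A_n.
\end{align*}
For $n=2$ we recover $\pd_s^2 \gm = \vk \bn$, which already gives the case $m=1$ since $\vk(0,t) = \vk(L,t) = 0$.

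The key step is to show by induction that every monomial appearing in $A_n$ or $B_n$ has the form $\prod_{i=1}^{N} \pd_s^{j_i} \vk$ with
\begin{align*}
\sum_{i=1}^{N} (j_i + 1) = n - 1
\end{align*}
(that is, a $\mfq^{n-1}$-type term in the notation of Definition \ref{q-def}). The base case is trivial, and the induction step follows because both operations in the recursion, namely differentiating a factor $\pd_s^{j_i} \vk \mapsto \pd_s^{j_i + 1} \vk$ and multiplying by $\vk$, raise the total weight $\sum (j_i + 1)$ by exactly one.

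Now fix an even index $n = 2m$. Each monomial in $A_{2m}$ or $B_{2m}$ has odd total weight $2m-1$. If every $j_i$ in such a monomial were odd, then every $(j_i + 1)$ would be even, forcing an even sum — a contradiction. Hence at least one factor must be of the form $\pd_s^{2\ell} \vk$ for some $\ell \geq 0$. By Lemma \ref{l-odd-k} every such factor vanishes at $x = 0$ and $x = L$, so every monomial of $A_{2m}$ and $B_{2m}$ vanishes at the boundary. Consequently $\pd_s^{2m} \gm = A_{2m} \bn + B_{2m} \bt = 0$ at $x = 0$ and $x = L$, proving the lemma. There is no substantive obstacle; the only care needed is the bookkeeping of the weight invariant and the elementary parity observation that produces an even-order derivative in each monomial.
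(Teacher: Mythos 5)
Your proof is correct and follows essentially the same route as the paper: the paper also establishes $\pd^n_s \gm = \mfq^{n-1}(\pd^{n-2}_s \vk)\,\bn + \mfq^{n-1}(\pd^{n-2}_s \vk)\,\pd_s \gm$ by induction via the Frenet--Serret formulas and then invokes the parity argument from the proof of Lemma \ref{l-odd-k} (an odd total weight forces at least one even-order factor $\pd^{2\ell}_s\vk$, which vanishes at the endpoints). Your write-up merely makes the weight bookkeeping and the parity step more explicit than the paper's ``along the same line'' reference.
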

\begin{proof}
First we prove that the relation 
\begin{align} \label{gm-rela}
\pd^n_s \gm = \mfq^{n-1}(\pd^{n-2}_s \vk) \bn + \mfq^{n-1}(\pd^{n-2}_s \vk) \pd_s \gm 
\end{align}
holds for any integers $n \geq 2$. Since $\pd^2_s \gm = \vk \bn$, we see that \eqref{gm-rela} holds for $n=2$. 
Suppose that \eqref{gm-rela} holds for any integers $2 \leq n \leq k$, where $k>2$ is some integer. 
Then we have 
\begin{align*}
\pd^{k+1}_s \gm 
 &= \pd^s \{ \mfq^{k-1}(\pd^{k-2}_s \vk) \} \bn + \mfq^{k-1}_s(\pd^{k-2}_s \vk) \pd_s \bn 
    + \pd^s \{ \mfq^{k-1}(\pd^{k-2}_s \vk) \} \pd_s \gm + \mfq^{k-1}_s(\pd^{k-2}_s \vk) \pd^2_s \gm \\
 &= \left\{ \pd^s \{ \mfq^{k-1}(\pd^{k-2}_s \vk) \} + \vk \mfq^{k-1}_s(\pd^{k-2}_s \vk) \right\} \bn 
    + \left\{ \pd^s \{ \mfq^{k-1}(\pd^{k-2}_s \vk) \} 
     - \vk \mfq^{k-1}_s(\pd^{k-2}_s \vk) \right\} \pd_s \gm \\
 &= \mfq^k(\pd^{k-1}_s \vk) \bn + \mfq^k(\pd^{k-1}_s \vk) \pd_s \gm. 
\end{align*}
This implies \eqref{gm-rela}. 
Then, along the same line as in the proof of Lemma \ref{l-odd-k}, 
we obtain the conclusion. 
\end{proof}
By virtue of Lemma \ref{l-gm-bd-m}, we can apply Lemma \ref{l-inter-p}, i.e., interpolation inequalities, 
to $\pd^2_s \gm$. 
Using the interpolation inequalities, we first prove the following estimate: 
\begin{lem} \label{l-est-deri-4}
There exist positive constants $C_1$ and $C_2$ depending only on $\lm$ such that 
\begin{align*}
\Lns{\pd^4_s \gm}{2} \leq \Lns{\pd_t \gm}{2} + C_1 \Lns{\pd^2_s \gm}{2}^{5} + C_2 \Lns{\pd^2_s \gm}{2}. 
\end{align*}
\end{lem}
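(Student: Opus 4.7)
The plan is to start from the geometric evolution equation \eqref{gf} and solve it algebraically for $\pd^4_s \gm$, yielding
\begin{align*}
2 \pd^4_s \gm = - \pd_t \gm + \bigl( \lm^2 - 3 |\pd^2_s \gm|^2 \bigr) \pd^2_s \gm - 3 \pd_s \bigl( |\pd^2_s \gm|^2 \bigr) \pd_s \gm .
\end{align*}
Taking the $L^2_s$ norm and applying the triangle inequality, the task reduces to bounding three lower-order terms: a linear term $\lm^2 \pd^2_s \gm$ (trivial), a cubic term $|\pd^2_s \gm|^2 \pd^2_s \gm$, and a term involving $\pd_s(|\pd^2_s \gm|^2) \pd_s \gm = 2 (\pd^2_s \gm \cdot \pd^3_s \gm) \pd_s \gm$, where I have used that $|\pd_s \gm| = 1$ in arc-length parametrization.

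Next I would apply the interpolation inequalities from Lemma \ref{l-inter-p}. These are legitimately available because Lemma \ref{l-gm-bd-m} guarantees $\pd^{2m}_s \gm = 0$ at the endpoints, so we can use the lemma with $u = \pd^2_s \gm$. Specifically, with $p=0$, $r=2$, I would get
\begin{align*}
\Lns{\pd^2_s \gm}{\infty} \leq \sqrt{2}\, \Lns{\pd^2_s \gm}{2}^{3/4} \Lns{\pd^4_s \gm}{2}^{1/4}, \qquad
\Lns{\pd^3_s \gm}{2} \leq \Lns{\pd^2_s \gm}{2}^{1/2} \Lns{\pd^4_s \gm}{2}^{1/2}.
\end{align*}
For the cubic term I would write $\Lns{|\pd^2_s \gm|^3}{2}^2 \leq \Lns{\pd^2_s \gm}{\infty}^4 \Lns{\pd^2_s \gm}{2}^2$, which combined with the $L^\infty$ bound yields
$\Lns{|\pd^2_s \gm|^3}{2} \leq 2 \Lns{\pd^2_s \gm}{2}^{5/2} \Lns{\pd^4_s \gm}{2}^{1/2}$. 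For the mixed term, Hölder gives
$\Lns{\pd^2_s \gm \cdot \pd^3_s \gm}{2} \leq \Lns{\pd^2_s \gm}{\infty} \Lns{\pd^3_s \gm}{2} \leq \sqrt{2}\, \Lns{\pd^2_s \gm}{2}^{5/4} \Lns{\pd^4_s \gm}{2}^{3/4}$.

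Finally I would apply Young's inequality to both of these estimates with a small parameter $\ve > 0$, choosing exponents so as to produce the term $\ve \Lns{\pd^4_s \gm}{2}$ on the right. Summing the contributions and picking $\ve$ small enough (say, so that the total coefficient of $\Lns{\pd^4_s \gm}{2}$ on the right is less than $1$), this term is absorbed into the $2\Lns{\pd^4_s \gm}{2}$ on the left. The remaining terms from Young's inequality are bounded by $C_1 \Lns{\pd^2_s \gm}{2}^5 + C_2 \Lns{\pd^2_s \gm}{2}$, giving exactly the asserted inequality.

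The only step requiring any care is verifying that the exponents in the interpolation estimates conspire to produce the power $5$ on $\Lns{\pd^2_s \gm}{2}$ in both absorbing terms, but this is a straightforward scaling check. There is no substantive obstacle; the lemma is a routine consequence of the PDE structure together with the interpolation inequalities made available by Lemmas \ref{l-inter-p} and \ref{l-gm-bd-m}.
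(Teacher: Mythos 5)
Your proposal is correct and follows essentially the same route as the paper: both isolate the $2\pd^4_s\gm$ term in \eqref{gf}, estimate the cubic term $\av{\pd^2_s\gm}^2\pd^2_s\gm$ and the term $\pd_s(\av{\pd^2_s\gm}^2)\pd_s\gm$ via the interpolation inequalities of Lemma \ref{l-inter-p} (justified by Lemma \ref{l-gm-bd-m}), and absorb the resulting $\Lns{\pd^4_s\gm}{2}$ contributions by Young's inequality, arriving at the power $5$ on $\Lns{\pd^2_s\gm}{2}$. The only difference is cosmetic: the paper works with squared $L^2$ norms and takes a square root at the end, while you apply the triangle inequality to the norms directly.
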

\begin{proof}
From \eqref{gf}, we have 
\begin{align*}
\Lns{\pd_t \gm}{2}^2 
& = \int_\gm \av{- 2 \pd^4_s \gm +\left(\lm^2 - 3 \av{\pd^2_s \gm}^2 \right) \pd^2_s \gm  
                     - 3 \pd_s \left( \av{\pd^2_s \gm}^2 \right) \pd_s \gm}^2\, ds \\
& \geq 2 \Lns{\pd^4_s \gm}{2}^2 
        - 2 \Lns{\left(\lm^2 - 3 \av{\pd^2_s \gm}^2 \right) \pd^2_s \gm}{2}^2 
          - 2 \Lns{3 \pd_s \left( \av{\pd^2_s \gm}^2 \right) \pd_s \gm }{2}^2. 
\end{align*}
It follows from interpolation inequalities that 
\begin{align*}
\Lns{\left(\lm^2 - 3 \av{\pd^2_s \gm}^2 \right) \pd^2_s \gm}{2}^2 
& \leq 2 \lm^4 \Lns{\pd^2_s \gm}{2}^2 + 18 \Lns{\av{\pd^2_s \gm}^2 \pd^2_s \gm}{2}^2 \\ 
& \leq 2 \lm^4 \Lns{\pd^2_s \gm}{2}^2 + 18 \Lns{\pd^2_s \gm}{\infty}^{4} \Lns{\pd^2_s \gm}{2}^2 \\
& \leq 2 \lm^4 \Lns{\pd^2_s \gm}{2}^2 + 72 \Lns{\pd^4_s \gm}{2} \Lns{\pd^2_s \gm}{2}^5 \\
& \leq \ve \Lns{\pd^4_s \gm}{2}^2 + C(1/\ve) \Lns{\pd^2_s \gm}{2}^{10} + 2 \lm^4 \Lns{\pd^2_s \gm}{2}^2 . 
\end{align*}
Similarly we have 
\begin{align*}
\Lns{\pd_s \left( \av{\pd^2_s \gm}^2 \right) \pd_s \gm }{2}^2 
& \leq 4 \Lns{\pd^2_s \gm}{\infty}^2 \Lns{\pd^3_s \gm}{2}^2 \\
& \leq 4 \sqrt{2} \Lns{\pd^4_s \gm}{2}^{3/2} \Lns{\pd^2_s \gm}{2}^{5/2} 
  \leq \ve \Lns{\pd^4_s \gm}{2}^2 + C(1/\ve) \Lns{\pd^2_s \gm}{2}^{10}. 
\end{align*}
Letting $\ve= \dfrac{1}{4}$, we obtain 
\begin{align*}
\Lns{\pd^4_s \gm}{2}^2 
 \leq \Lns{\pd_t \gm}{2}^2 + C_1 \Lns{\pd^2_s \gm}{2}^{10} + C_2 \Lns{\pd^2_s \gm}{2}^2. 
\end{align*}
\end{proof}
In order to derive the estimate of $\Lns{\pd^n_s \gm}{2}$ for $n \geq 5$, 
we prepare the following:  
\begin{lem} \label{commu-2}
For any $n \in \N$, it holds that  
\begin{align*}
\pd_t \pd^n_s \gm = \pd^n_s \pd_t \gm - \sum^{n-1}_{j=0}\pd^j_s(\Gl \pd^{n-j}_s \gm). 
\end{align*}
\end{lem}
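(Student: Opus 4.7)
The plan is to prove the identity by induction on $n$, using the single commutation rule \eqref{st-ts}, namely $\partial_t \partial_s = \partial_s \partial_t - G^\lambda \partial_s$, as the only substantive input. The statement is really a combinatorial consequence of applying this one-step commutator $n$ times and carefully tracking the Leibniz expansion when $G^\lambda \partial_s$ is pushed past outer $\partial_s$ operators.

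For the base case $n=1$, the claim is just $\partial_t \partial_s \gamma = \partial_s \partial_t \gamma - G^\lambda \partial_s \gamma$, which is precisely \eqref{st-ts} applied to $\gamma$; the sum on the right collapses to its single $j=0$ term. For the inductive step, assume the formula holds for some $n\ge 1$. I would compute
\[
\partial_t \partial_s^{n+1}\gamma = \partial_t \partial_s (\partial_s^n \gamma) = \partial_s \partial_t (\partial_s^n \gamma) - G^\lambda \partial_s^{n+1}\gamma,
\]
using \eqref{st-ts} with the function $\partial_s^n \gamma$. Then I apply the inductive hypothesis inside the first term, obtaining
\[
\partial_s \partial_t \partial_s^n \gamma = \partial_s^{n+1}\partial_t \gamma - \sum_{j=0}^{n-1} \partial_s^{j+1}\bigl(G^\lambda \partial_s^{n-j}\gamma\bigr).
\]

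Combining the two displays and re-indexing the sum via $k = j+1$ converts $\sum_{j=0}^{n-1}\partial_s^{j+1}(G^\lambda \partial_s^{n-j}\gamma)$ into $\sum_{k=1}^{n}\partial_s^{k}(G^\lambda \partial_s^{n+1-k}\gamma)$, while the leftover $-G^\lambda \partial_s^{n+1}\gamma$ is exactly the $k=0$ term of the same sum. Absorbing it therefore yields
\[
\partial_t \partial_s^{n+1}\gamma = \partial_s^{n+1}\partial_t \gamma - \sum_{k=0}^{n} \partial_s^{k}\bigl(G^\lambda \partial_s^{n+1-k}\gamma\bigr),
\]
which closes the induction.

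There is no real obstacle here: the argument is mechanical once \eqref{st-ts} is in hand, the only subtle point being the bookkeeping in the re-indexing step, which must reassemble the shifted sum and the isolated $G^\lambda \partial_s^{n+1}\gamma$ term into a single sum starting from $j=0$. One should also note that $G^\lambda$ itself is allowed to be differentiated by the outer $\partial_s^j$, which is why the Leibniz rule is implicit in keeping $\partial_s^j(G^\lambda \partial_s^{n-j}\gamma)$ as an unexpanded product rather than distributing it.
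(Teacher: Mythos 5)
Your proof is correct: the base case is exactly the commutation rule \eqref{st-ts} applied to $\gm$, and the inductive step with the re-indexing $k=j+1$ correctly absorbs the leftover $-\Gl\,\pd_s^{n+1}\gm$ as the $k=0$ term. The paper states this lemma without proof, treating it as a routine consequence of \eqref{st-ts}, and your induction is precisely the intended argument.
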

Using Lemma \ref{commu-2}, we prove the estimate of $\Lns{\pd^{n+4}_s \gm}{2}$ for any $n \in \N$: 
\begin{lem} \label{l-est-deri-n}
For any $n \in \N$, the following estimate holds{\rm :}  
\begin{align} \label{est-deri-n}
\Lns{\pd^{n+4}_s \gm}{2} 
 \leq \Lns{\pd^{n}_s \pd_t \gm}{2} + C \Lns{\pd^2_s \gm}{2}^{2n+5} + C \Lns{\pd^2_s \gm}{2}. 
\end{align}
\end{lem}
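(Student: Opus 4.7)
The plan is to mimic the argument of Lemma \ref{l-est-deri-4} (which is exactly the case $n=0$): differentiate the flow equation \eqref{gf} $n$ times, isolate the top-order derivative, take $L^2_s$ norms, and absorb the top-order piece by a combination of interpolation and Young's inequality. Concretely, rewriting \eqref{gf} after applying $\pd^n_s$ gives
\begin{align*}
2 \pd^{n+4}_s \gm
 = - \pd^n_s \pd_t \gm + \lm^2 \pd^{n+2}_s \gm
   - 3\pd^n_s\!\left(|\pd^2_s \gm|^2 \pd^2_s \gm\right)
   - 3\pd^n_s\!\left(\pd_s(|\pd^2_s \gm|^2)\pd_s \gm\right),
\end{align*}
so, by the triangle inequality, it suffices to bound each of the last three contributions, in $L^2_s$ norm, by $\ve \|\pd^{n+4}_s \gm\|_{L^2_s} + C_\ve(\|\pd^2_s \gm\|_{L^2_s}^{2n+5} + \|\pd^2_s \gm\|_{L^2_s})$ with $\ve<2$, and then choose $\ve$ small enough to absorb the top-order term to the left.

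The key observation is that, by Lemma \ref{l-gm-bd-m}, the function $u := \pd^2_s \gm$ satisfies $\pd^{2m}_s u(0,t) = \pd^{2m}_s u(L,t) = 0$ for every $m \in \N$, so the interpolation inequalities \eqref{inter-p} and \eqref{inter-p-2} of Lemma \ref{l-inter-p} apply to $u$ with $p = 0$ and $r = n+2$. This yields, for each admissible $q$, bounds on $\|\pd^{q+2}_s \gm\|_{L^2_s}$ and $\|\pd^{q+2}_s \gm\|_{L^\infty_s}$ in terms of $\|\pd^2_s \gm\|_{L^2_s}$ and $\|\pd^{n+4}_s \gm\|_{L^2_s}$. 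The linear contribution $\lm^2\|\pd^{n+2}_s \gm\|_{L^2_s}$ is handled immediately by \eqref{inter-p} with $q=n$ followed by Young's inequality, producing the harmless term $C\|\pd^2_s \gm\|_{L^2_s}$.

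For the two nonlinear contributions I expand by the Leibniz rule: each summand is a product $\pd^{\alpha}_s \gm \cdot \pd^{\beta}_s \gm \cdot \pd^{\gamma}_s \gm$ with $\alpha + \beta + \gamma = n+6$ and each exponent $\geq 1$. I place two of the three factors in $L^\infty_s$ and the remaining one in $L^2_s$, using the trivial bound $\|\pd_s \gm\|_{L^\infty_s} = 1$ whenever $\pd_s \gm$ happens to be one of the factors. A direct exponent count, relying on the homogeneity $\alpha+\beta+\gamma = n+6$, shows that in each summand the quantity $\|\pd^{n+4}_s \gm\|_{L^2_s}$ appears with exponent $(n+1)/(n+2)$ when $\pd_s \gm$ is absent and $(2n+3)/(2(n+2))$ when it is present, while $\|\pd^2_s \gm\|_{L^2_s}$ appears with the complementary exponent. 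Both top-order exponents are strictly less than $1$, so Young's inequality with the matching conjugate pair converts each summand into $\ve\|\pd^{n+4}_s \gm\|_{L^2_s} + C_\ve \|\pd^2_s \gm\|_{L^2_s}^{2n+5}$.

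I do not expect any substantive obstacle. Summing the finitely many Leibniz summands (their number depends only on $n$) and choosing $\ve$ small enough absorbs the accumulated $\|\pd^{n+4}_s \gm\|_{L^2_s}$ term into the left-hand side, delivering \eqref{est-deri-n}. The argument is essentially bookkeeping; the only subtle point worth flagging is the verification that $u = \pd^2_s \gm$ satisfies the vanishing odd-boundary conditions required by Lemma \ref{l-inter-p}, which is why Lemma \ref{l-gm-bd-m} was established immediately beforehand.
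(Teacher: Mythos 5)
Your proposal is correct and follows essentially the same route as the paper: apply $\pd^n_s$ to \eqref{gf}, isolate $2\pd^{n+4}_s\gm$, expand the nonlinear terms by Leibniz, and use the interpolation inequalities of Lemma \ref{l-inter-p} (legitimized for $u=\pd^2_s\gm$ by Lemma \ref{l-gm-bd-m}) together with Young's inequality to absorb the top-order term; your exponent counts $\tfrac{n+1}{n+2}$ and $\tfrac{2n+3}{2(n+2)}$ match the paper's $\Lns{\pd^{n+4}_s \gm}{2}^{\frac{2n+2}{n+2}} \Lns{\pd^2_s \gm}{2}^{\frac{4n+10}{n+2}}$ for the squared norms. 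The only cosmetic difference is that the paper works with squared $L^2_s$ norms and the lower bound $\Lns{\pd^{n}_s \pd_t \gm}{2}^2 \geq 2\Lns{\pd^{n+4}_s\gm}{2}^2 - 2I_1 - 2I_2$ rather than your plain triangle inequality.
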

\begin{proof}
We have already proved the case $n=0$ by Lemma \ref{l-est-deri-4}. 
Let $n \geq 1$ fix arbitrarily. 
By \eqref{gf}, we have 
\begin{align*}
\Lns{\pd^{n}_s \pd_t \gm}{2}^2 
& = \Lns{-2 \pd^{n+4}_s \gm 
     + \pd^{n}_s \left\{ \left( \lm^2 - 3 \av{\pd^2_s \gm}^2 \right) \pd^2_s \gm \right\}
      -3 \pd^{n}_s \left\{ \pd_s \left( \av{\pd^2_s \gm}^2 \right) \pd_s \gm \right\}  }{2}^2 \\
& \geq 2 \Lns{\pd^{n+4}_s \gm}{2}^2 
      -2 \Lns{\pd^{n}_s \left\{ \left( \lm^2 - 3 \av{\pd^2_s \gm}^2 \right) \pd^2_s \gm \right\} }{2}^2 
  -2 \Lns{3 \pd^{n}_s \left\{ \pd_s \left( \av{\pd^2_s \gm}^2 \right) \pd_s \gm \right\} }{2}^2 \\
&:= 2 \Lns{\pd^{n+4}_s \gm}{2}^2 -2 I_1 -2 I_2.  
\end{align*}
Concerning $I_1$, first we have 
\begin{align*}
\Lns{\pd^n_s \pd^2_s \gm}{2}^2 
 \leq C \Lns{\pd^{n+4}_s \gm}{2}^{\frac{2n}{n+2}} \Lns{\pd^2_s \gm}{2}^{\frac{4}{n+2}} 
 \leq \ve \Lns{\pd^{n+4}_s \gm}{2}^2 + C \Lns{\pd^2_s \gm}{2}^2 . 
\end{align*}
Furthermore since 
\begin{align*}
\pd^n_s \left( \av{\pd^2_s \gm}^2 \pd^2_s \gm \right) 
 = 2 \sum^{n}_{j=0} \mbox{}_n C_j \pd^{n-j+2}_s \gm \sum^{j}_{k=0} \mbox{}_j C_k \pd^{k+2}_s \gm \cdot \pd^{j-k+2}_s \gm , 
\end{align*}
it follows from interpolation inequalities that 
\begin{align*}
\Lns{\pd^n_s \left( \av{\pd^2_s \gm}^2 \pd^2_s \gm \right)}{2}^2 
& \leq C \sum^{n}_{j=0} \sum^{j}_{k=0} \Lns{\pd^{n-j+2}_s \gm (\pd^{k+2}_s \gm \cdot \pd^{j-k+2}_s \gm)}{2}^2 \\
& \leq C \sum^{n}_{j=0} \sum^{j}_{k=0}\Lns{\pd^{n-j+2}_s \gm}{2}^2 
                                       \Lns{\pd^{k+2}_s \gm}{\infty}^2 \Lns{\pd^{j-k+2}_s \gm}{\infty}^2 \\
& \leq C \Lns{\pd^{n+4}_s \gm}{2}^{\frac{2n+2}{n+2}} \Lns{\pd^2_s \gm}{2}^{\frac{4n+10}{n+2}} 
  \leq \ve \Lns{\pd^{n+4}_s \gm}{2}^2 + C \Lns{\pd^2_s \gm}{2}^{4n+10}. 
\end{align*}
Thus we have 
\begin{align*}
I_1 \leq (\lm^2 +1) \ve \Lns{\pd^{n+4}_s \gm}{2}^2 + C \Lns{\pd^2_s \gm}{2}^{4n+10} + C \Lns{\pd^2_s \gm}{2}^2. 
\end{align*}
Next we estimate $I_2$. Since 
\begin{align*}
\pd^n_s \left\{ \pd_s \left( \av{\pd^2_s \gm}^2 \right) \pd_s \gm \right\} 
 = 2 \sum^{n}_{j=0} \mbox{}_n C_j \pd^{n-j+1}_s \gm 
     \sum^{j}_{k=0} \mbox{}_j C_k \pd^{k+2}_s \gm \cdot \pd^{j-k+3}_s \gm, 
\end{align*}
we obtain 
\begin{align*}
I_2 
& \leq C \sum^{n}_{j=0} \sum^{j}_{k=j} \Lns{\pd^{n-j+1}_s \gm (\pd^{k+2}_s \gm \cdot \pd^{j-k+3}_s \gm)}{2}^2 \\
& \leq C \sum^{n}_{j=0} \sum^{j}_{k=j}
       \Lns{\pd^{n-j+1}_s \gm}{2}^2 \Lns{\pd^{k+2}_s \gm}{\infty}^2 \Lns{\pd^{j-k+3}_s \gm}{\infty}^2 \\
& \leq C \Lns{\pd^{n+4}_s \gm}{2}^{\frac{2n+2}{n+2}} \Lns{\pd^2_s \gm}{2}^{\frac{4n+10}{n+2}} 
  \leq \ve \Lns{\pd^{n+4}_s \gm}{2}^2 + C \Lns{\pd^2_s \gm}{2}^{4n+10}. 
\end{align*}
Letting $\ve>0$ sufficiently small, we complete the proof. 
\end{proof}
By virtue of Lemma \ref{commu-2}, we show that $\pd_t \gm$ satisfies the 
similar property to Lemma \ref{l-gm-bd-m}. 
\begin{lem} \label{gm-t-bdc}
Let $\gm(x,t)$ be a solution of \eqref{comp-sys}. Then it holds that 
\begin{align}
\pd^{2m}_s \pd_t \gm(0,t) = \pd^{2m}_s \pd_t \gm(L,t) =0
\end{align}
for any non-negative integer $m$. 
\end{lem}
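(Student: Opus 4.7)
The plan is to use Lemma \ref{commu-2} to commute $\pd_t$ with $\pd^{2m}_s$ and then reduce everything to the boundary vanishing results already recorded in Lemmas \ref{l-odd-k} and \ref{l-gm-bd-m}. The case $m = 0$ is immediate: differentiating the time-independent Dirichlet conditions $\gm(0,t) \equiv (0,0)$ and $\gm(L,t) \equiv (R,0)$ in $t$ gives $\pd_t \gm(0,t) = \pd_t \gm(L,t) = 0$.

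For $m \geq 1$, applying Lemma \ref{commu-2} with $n = 2m$ yields
\begin{align*}
\pd^{2m}_s \pd_t \gm = \pd_t \pd^{2m}_s \gm + \sum_{j=0}^{2m-1} \pd^j_s\bigl(\Gl\, \pd^{2m-j}_s \gm\bigr).
\end{align*}
Lemma \ref{l-gm-bd-m} says $\pd^{2m}_s \gm \equiv 0$ on the boundary in $t$, so the first term vanishes there. Expanding the summands by the Leibniz rule produces a linear combination of terms of the form $\pd^k_s \Gl \cdot \pd^{2m-k}_s \gm$ with $0 \leq k \leq j \leq 2m-1$. When $k$ is even, $2m - k$ is even and at least $2$, so $\pd^{2m-k}_s \gm$ vanishes at $x = 0, L$ by Lemma \ref{l-gm-bd-m}. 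Thus it suffices to prove $\pd^k_s \Gl(0,t) = \pd^k_s \Gl(L,t) = 0$ for every odd $k$.

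To treat the odd-$k$ case I would use the identity $\Gl = F^\lm \vk$, which follows from a short computation based on $\pd_t \gm = -F^\lm \bn$ and the definition $\Gl = \pd_s \pd_t \gm \cdot \pd_s \gm$. Leibniz then gives
\begin{align*}
\pd^k_s \Gl = \sum_{j=0}^{k} \binom{k}{j} \pd^j_s F^\lm \cdot \pd^{k-j}_s \vk.
\end{align*}
For $k$ odd, if $j$ is odd then $k - j$ is even and $\pd^{k-j}_s \vk$ vanishes at the boundary by Lemma \ref{l-odd-k}; if $j$ is even, I would invoke the auxiliary claim $\pd^l_s F^\lm(0,t) = \pd^l_s F^\lm(L,t) = 0$ for every even $l \geq 0$. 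Since $F^\lm = 2\pd^2_s \vk + \vk^3 - \lm^2 \vk$, the contributions of $2\pd^{l+2}_s \vk$ and $-\lm^2 \pd^l_s \vk$ vanish at the boundary directly by Lemma \ref{l-odd-k}. For $\pd^l_s(\vk^3)$, Leibniz expands it into a sum of monomials $\pd^{k_1}_s \vk \cdot \pd^{k_2}_s \vk \cdot \pd^{k_3}_s \vk$ with $k_1 + k_2 + k_3 = l$; such a monomial can be nonzero at the boundary only if each $k_i$ is odd, but then $k_1 + k_2 + k_3$ is odd, contradicting that $l$ is even.

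The only nontrivial step is the parity bookkeeping for $\pd^l_s(\vk^3)$ and its propagation through $\Gl$; everything else reduces to a careful but routine application of the Leibniz rule combined with the boundary identities in Lemmas \ref{l-odd-k} and \ref{l-gm-bd-m}.
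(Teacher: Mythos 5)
Your proof is correct and follows essentially the same route as the paper: both apply Lemma \ref{commu-2} to reduce the claim to the vanishing of $\pd^k_s \Gl \cdot \pd^{2m-k}_s \gm$ at $x=0,L$, and then conclude by the even/odd parity of derivatives of $\vk$ at the boundary (Lemmas \ref{l-odd-k} and \ref{l-gm-bd-m}). The only difference is presentational: the paper invokes its formula \eqref{gm-t-2} for $\Gl$ and leaves the parity bookkeeping implicit, whereas you make it explicit through the factorization $\Gl = F^\lm \vk$.
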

\begin{proof}
By virtue of Lemma \ref{commu-2}, we have already known that 
\begin{align} \label{gm-t-1}
\pd_t \pd^n_s \gm = \pd^n_s \pd_t \gm - \sum^{n-1}_{j=0}\pd^j_s(\Gl \pd^{n-j}_s \gm), 
\end{align}
where
\begin{align} \label{gm-t-2}
\Gl &= \pd^2_s \left( \av{\pd^2_s \gm}^2 \right) - 2 \av{\pd^3_s \gm}^2 
          + \left(3 \av{\pd^2_s \gm}^2 - \lm^2 \right) \av{\pd^2_s \gm}^2 \\
 &= \pd^2_s \gm \cdot \pd^4_s \gm + \left(3 \av{\pd^2_s \gm}^2 - \lm^2 \right) \av{\pd^2_s \gm}^2. \notag
\end{align}
Moreover Lemma \ref{l-odd-k} gives us 
\begin{align} \label{gm-t-3} 
\pd_t \pd^{2m}_s \gm(0,t)= \pd_t \pd^{2m}_s \gm(L,t)=0. 
\end{align}
Since 
\begin{align*}
\pd^j_s(G^\lm \pd^{n-j}_s \gm)
 = \sum^{j}_{k=0} \mbox{}_j C_k \pd^k_s \Gl \pd^{2m-k}_s \gm,  
\end{align*}
Lemma \ref{commu-2} and \eqref{gm-t-2} yield that 
\begin{align} \label{gm-t-4}
\pd^j_s(\Gl \pd^{n-j}_s \gm) = 0
\end{align}
at $x=0$, $L$ for any $t>0$ and non-negative integer $j \leq n$. 
By \eqref{gm-t-1}, \eqref{gm-t-3}, and \eqref{gm-t-4}, we complete the proof. 
\end{proof}
By virtue of Lemma \ref{gm-t-bdc}, we are able to apply Lemma \ref{l-inter-p} to $\pd_t \gm$. 

In the rest of this section, we shall use the notation 
\begin{align*}
\int_\gm {\bf u} \cdot {\bf v} \, ds = \Ip{\bf{u}}{\bf{v}}, 
\end{align*}
where ${\bf u}$ and ${\bf v}$ are functions defined on $\gm$. 
By way of Lemma \ref{gm-t-bdc}, we obtain the following: 
\begin{lem} \label{l-conv-0}
For any $n \in \N$, it holds that 
\begin{align}
\Lns{\pd^n_s \pd_t \gm}{2} \to 0 \quad \text{as} \quad t \to \infty. 
\end{align}
\end{lem}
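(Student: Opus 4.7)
The plan is to combine an energy-dissipation argument with uniform-in-time a priori estimates on all spatial derivatives, then conclude by interpolation. The base case $n=0$ is the main content; higher $n$ follow by interpolating between $n=0$ and a uniformly bounded higher-order norm.

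\textbf{Step 1 (energy dissipation).} Since \eqref{comp-sys} is the $L^2$ gradient flow of $E(\gm)=\mE(\gm)+\lm^2\mL(\gm)$, a direct computation using \eqref{ds-pd} gives
\[
\frac{d}{dt}E(\gm(\cdot,t))=-\int_\gm |F^\lm|^2\,ds=-\Lns{\pd_t\gm}{2}^2.
\]
Because $E$ is bounded below by $\lm^2 R>0$, integrating in time yields
\[
\int_0^\infty \Lns{\pd_t\gm(\cdot,t)}{2}^2\,dt \le E(\Gm_0)-\lm^2 R<\infty.
\]

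\textbf{Step 2 (uniform-in-time bounds on all spatial derivatives).} Lemma~\ref{k-est-lm-1} already gives $\Lns{\vk}{2}\le C_0$ uniformly in $t$. Refining the argument of Lemma~\ref{high-energy} by keeping the top-order dissipation on the right, one has
\[
\frac{d}{dt}\Lns{\pd^j_s\vk}{2}^2\le -\Lns{\pd^{j+2}_s\vk}{2}^2+C,
\]
with $C$ depending only on $j$, $\lm$ and $C_0$. Combined with the interpolation
$\Lns{\pd^j_s\vk}{2}^{2(j+2)/j}\le C\,\Lns{\pd^{j+2}_s\vk}{2}^2$ (Lemma~\ref{l-inter-p}, valid here by Lemma~\ref{l-odd-k}), an ODE comparison produces a uniform bound $\Lns{\pd^j_s\vk}{2}\le C_j$ for every $j$. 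Translating back to $\gm$ via \eqref{gm-rela}, and using that $\mL(\gm)$ is bounded uniformly in $t$ (from $E$ decreasing) and bounded below by $R>0$, one obtains uniform bounds on $\Lns{\pd^n_s\gm}{2}$ for every $n\ge 2$, and hence (through the evolution equation \eqref{gf}) on $\Lns{\pd^n_s\pd_t\gm}{2}$ for every $n\ge 0$.

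\textbf{Step 3 (the base case $n=0$).} Set $f(t):=\Lns{\pd_t\gm(\cdot,t)}{2}^2$. Step 1 shows $f\in L^1(0,\infty)$. If $f$ is uniformly Lipschitz in $t$ then $f(t)\to 0$ necessarily. Differentiating and using \eqref{pt-ds-G},
\[
f'(t)=2\int_\gm \pd_t\gm\cdot\pd_t^2\gm\,ds+\int_\gm |\pd_t\gm|^2\,\Gl\,ds.
\]
Writing $\pd_t^2\gm=\pd_t(-F^\lm\bn)$ and expanding, every resulting term is a polynomial in $\vk,\pd_s\vk,\pd_s^2\vk,\ldots$ (up to a finite order), which is pointwise bounded by Step 2 and Lemma~\ref{l-inter-p} (whose $L^\infty$ version also applies, again thanks to Lemma~\ref{l-odd-k}). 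Hence $|f'(t)|\le C$ uniformly, which together with $f\in L^1(0,\infty)$ forces $f(t)\to 0$, i.e.\ $\Lns{\pd_t\gm}{2}\to 0$.

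\textbf{Step 4 (general $n$ via interpolation).} By Lemma~\ref{gm-t-bdc}, $\pd_t\gm$ satisfies the boundary vanishing required by Lemma~\ref{l-inter-p}. For fixed $n\ge 1$, choose any integer $N>n$ and apply \eqref{inter-p} with $p=0$, $q=n$, $r=N$:
\[
\Lns{\pd^n_s\pd_t\gm}{2}\le \Lns{\pd_t\gm}{2}^{(N-n)/N}\,\Lns{\pd^N_s\pd_t\gm}{2}^{n/N}.
\]
The second factor is uniformly bounded by Step 2, and Step 3 sends the first factor to zero. This completes the proof.

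\textbf{Main obstacle.} The delicate point is Step 2: extracting uniform-in-time bounds on $\Lns{\pd^j_s\vk}{2}$ for all $j$, rather than the merely locally-in-time bounds used to prove global existence in Theorem~\ref{comp-gm-global}. Once those are in hand, Steps 1, 3, 4 are essentially formal.
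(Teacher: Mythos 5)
Your proof is correct, but it takes a genuinely different route from the paper's. The paper runs a two-level induction on even orders: at each stage it derives, by lengthy integrations by parts of the linearized flow, a differential inequality of the form
\begin{align*}
\pd_t \Lns{\pd^{2m}_s \pd_t \gm}{2}^2
 \leq - \Lns{\pd^{2m+2}_s \pd_t \gm}{2}^2 + C \Lns{\pd^{2m}_s \pd_t \gm}{2}^2
                 + C \sum^{2m-2}_{l=0} \Lns{\pd^l_s \pd_t \gm}{2}^2,
\end{align*}
and proves simultaneously that $\int_0^\infty \Lns{\pd^{2m}_s\pd_t\gm}{2}^2\,dt<\infty$ and $\Lns{\pd^{2m-2}_s\pd_t\gm}{2}\to 0$, obtaining the uniform bounds on $\Lns{\pd^n_s\gm}{2}$ needed along the way from the inductive hypothesis via Lemma~\ref{l-est-deri-n}. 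You instead decouple the two tasks: you first extract uniform-in-time bounds on all $\Lns{\pd^j_s\vk}{2}$ directly from the curvature evolution, by retaining the dissipative term $-\Lns{\pd^{j+2}_s\vk}{2}^2$ that the proof of Lemma~\ref{high-energy} discards and closing the loop with the interpolation of Lemma~\ref{l-inter-p} and an ODE comparison for $y'\le -c\,y^{1+2/j}+C$ (the standard Dziuk--Kuwert--Sch\"atzle device, legitimate here since Lemma~\ref{k-est-lm-1} bounds $\Lns{\vk}{2}$ uniformly and Lemma~\ref{l-odd-k} supplies the boundary conditions). With those bounds in hand, the base case reduces to Barbalat's lemma ($f\in L^1(0,\infty)$ with $f'$ uniformly bounded forces $f\to0$), and every $n\ge1$ follows from a single interpolation against a bounded top-order norm of $\pd_t\gm$ (Lemma~\ref{gm-t-bdc} justifying the use of Lemma~\ref{l-inter-p}). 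Your route is shorter, avoids the delicate term-by-term estimates of $I_{11},\dots,I_{15}$, $J_1$, $J_2$, $K_1$, $K_2$, and delivers the uniform bounds \eqref{c-1} used later in Theorem~\ref{c-comv-thm} as a byproduct rather than a consequence of this lemma; the paper's argument yields in addition the time-integrability $\int_0^\infty\Lns{\pd^{2m}_s\pd_t\gm}{2}^2\,dt<\infty$ at every order, which is more information than the statement requires.
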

\begin{proof}
To begin with, we have 
\begin{align*}
\int^\infty_0 \Lns{\pd_t \gm}{2}^2 \, dt 
 = - \int^\infty_0 \pd_t \left( \int_\gm \vk^2 \, ds + \lm^2 \mL(\gm) \right) \, dt
 = \left[ \int_\gm \vk^2 \, ds + \lm^2 \mL(\gm) \right]^{t=0}_{t=\infty} < + \infty. 
\end{align*}
Next it follows from \eqref{pt-ds-G} that  
\begin{align} \label{gm-dt-1}
\pd_t \Lns{\pd_t \gm}{2}^2 
 = 2 \Ip{\pd_t \gm}{\pd_t (\pd_t \gm)} + \Ip{\pd_t \gm}{\Gl \pd_t \gm}. 
\end{align}
Making use of \eqref{gf}, Lemma \ref{commu-2}, and the relation $\pd_t \gm \cdot \pd_s \gm =0$, 
we obtain 
\begin{align*}
\pd_t \gm \cdot \pd_t(\pd_t \gm) 
 = & -2 \pd_t \cdot \pd^4_s \gm + 2 \pd_t \gm \cdot \sum^{3}_{j=0}\pd^j_s(\Gl \pd^{4-j}_s \gm) 
    + (\lm^2 - 3 \av{\pd^2_s \gm}^2) \pd_t \gm \cdot (\pd^2_s \pd_t \gm - 2 \Gl \pd^2_s \gm )  \\
    & \qquad -3 \pd_t (\av{\pd^2_s \gm }^2) \pd_t \gm \cdot \pd^2_s \gm 
       -3 \pd_s (\av{\pd^2_s \gm }^2) \pd_t \gm \cdot \pd_s \pd_t \gm.  
\end{align*}
By integrating by parts, \eqref{gm-dt-1} is reduced to 
\begin{align} \label{gm-dt-2}
\pd_t \Lns{\pd_t \gm}{2}^2 &= 
 -4 \Lns{\pd^2_s \pd_t \gm}{2}^2 + 4 \sum^{3}_{j=0} \Ip{\pd_t \gm}{\pd^j_s(\Gl \pd^{4-j}_s \gm)} \\ 
& \qquad - 2 \Ip{\lm^2 - 3 \av{\pd^2_s \gm}^2}{\av{\pd_s \pd_t \gm }^2 - 2 | \Gl |^2}  
         + \Ip{ 6 \pd_t (\av{\pd^2_s \gm}^2 ) + \av{\pd_t \gm}^2}{\Gl} \notag \\
& := -4 \Lns{\pd^2_s \pd_t \gm}{2}^2 + 4 I_1 - 2 I_2 + I_3. \notag        
\end{align}
We shall estimate the right-hand side. 
First, by Lemmas \ref{l-gm-bd-m}, \ref{l-est-deri-4}, and \ref{gm-t-bdc}, we have 
\begin{align} \label{gm-dt-2-1}
\av{I_2}  
 & \leq C ( 1 + \Lns{\pd^2_s \gm}{\infty}^2 ) \Lns{\pd_s \pd_t \gm}{2}^2 
   \leq C (1 + \Lns{\pd^4_s \gm}{2}^{\frac{1}{2}} ) \Lns{\pd_t \gm}{2} \Lns{\pd^2_s \pd_t \gm}{2} \\
 & \leq \ve \Lns{\pd^2_s \pd_t \gm}{2}^2 + C \Lns{\pd_t \gm}{2}^2 (1 + \Lns{\pd_t \gm}{2}). \notag
\end{align}
Next we turn to the estimate of $I_3$. Since 
\begin{align*}
\pd_t (\av{\pd^2_s \gm}^2) 
 = 2 \pd^2_s \gm \cdot \pd^2_s \pd_t \gm - 4 \av{\pd^2_s \gm}^2 \Gl, 
\end{align*}
we obtain 
\begin{align} \label{gm-dt-2-2}
\av{I_3} 
 & \leq 12 \Lns{\pd^2_s \gm}{\infty} \Lns{\pd^2_s \pd_t \gm}{2} \Lns{\pd_s \pd_t \gm}{2}  
       + 24 \Lns{\pd^2_s \gm}{\infty}^2 \Lns{\pd_s \pd_t \gm}{2}^2 \\ 
 & \qquad + \Lns{\pd_t \gm}{\infty} \Lns{\pd_t \gm}{2} \Lns{\pd_s \pd_t \gm}{2} \notag \\
 & \leq C \Lns{\pd^4_s \gm}{2}^\frac{1}{4} \Lns{\pd_t \gm}{2}^\frac{1}{2} \Lns{\pd^2_s \pd_t \gm}{2}^\frac{3}{2}
              + C \Lns{\pd^4_s \gm}{2}^\frac{1}{2} \Lns{\pd_t \gm}{2} \Lns{\pd^2_s \pd_t \gm}{2} \notag \\
 & \qquad \quad + \sqrt{2} \Lns{\pd_t \gm}{2}^\frac{9}{4} \Lns{\pd^2_s \pd_t \gm}{2}^\frac{3}{4} \notag \\
 & \leq \ve \Lns{\pd^2_s \pd_t \gm}{2}^2 
               + C \Lns{\pd_t \gm}{2}^2 (1 + \Lns{\pd_t \gm}{2} + \Lns{\pd_t \gm}{2}^\frac{8}{5}). \notag
\end{align}
Finally we estimate the term $I_1$. By integrating by parts, $I_1$ is written as follows:    
\begin{align} \label{w-1}
I_1 = \sum^{2}_{j=0} \Ip{(-1)^j \pd^j_s \pd_t \gm}{\Gl \pd^{4-j}_s \gm}  
        + \Ip{\pd_t \gm}{\pd^3_s (\Gl \pd_s \gm)}.
\end{align}
For $j = 0, 1, 2$, we have 
\begin{align*}
\av{\Ip{\pd^j_s \pd_t \gm}{\Gl \pd^{4-j}_s \gm} } 
 & \leq \Lns{\pd^j_s \pd_t \gm}{2} \Lns{\pd_s \pd_t \gm}{2} \Lns{\pd^{4-j}_s \gm}{\infty} 
   \leq C \Lns{\pd^2_s \pd_t \gm}{2}^{\frac{j+1}{2}} \Lns{\pd_t \gm}{2}^{\frac{3-j}{2}} 
         \Lns{\pd^4_s \gm}{2}^{\frac{5-2j}{4}} \\
 & \leq \ve \Lns{\pd^2_s \pd_t \gm}{2}^2 + C \Lns{\pd_t \gm}{2}^2 \Lns{\pd^4_s \gm}{2}^{\frac{5-2j}{3-j}}. 
\end{align*}
Hence the first term in the right-hand side of \eqref{w-1} is estimated as follows:  
\begin{align*}
\av{\sum^{2}_{j=0} \Ip{(-1)^j \pd^j_s \pd_t \gm}{\Gl \pd^{4-j}_s \gm} } 
  \leq \ve \Lns{\pd^2_s \pd_t \gm}{2}^2 + C \Lns{\pd_t \gm}{2}^2 (1+ \Lns{\pd_t \gm}{2}^2) . 
\end{align*}
Furthermore the equality $\pd_t\gm \cdot \pd_s \gm =0$ yields that 
\begin{align*}
\Ip{\pd_t \gm}{\pd^3_s (\Gl \pd_s \gm)} 
 =  -3 \Ip{\pd_s \pd_t \gm}{\pd_s \Gl \pd^2_s \gm}   
      + \Ip{\pd_t \gm}{\Gl \pd^4_s \gm} . 
\end{align*}
Then we obtain 
\begin{align*}
\av{\Ip{\pd_s \pd_t \gm}{\pd_s \Gl \pd^2_s \gm} } 
 & \leq \av{\Ip{\pd_s \pd_t \gm \cdot \pd^2_s \gm} 
             {\pd^2_s \pd_t \gm \cdot \pd_s \gm + \pd_s \pd_t \gm \cdot \pd^2_s \gm} } \\
 & \leq \Lns{\pd^2_s \gm}{\infty} \Lns{\pd_s \pd_t \gm}{2} \Lns{\pd^2_s \pd_t \gm}{2} 
         + \Lns{\pd^2_s \gm}{\infty}^2 \Lns{\pd_s \pd_t \gm}{2}^2 \\
 & \leq \ve \Lns{\pd^2_s \pd_t \gm}{2}^2 + C \Lns{\pd_t \gm}{2}^2 (1+\Lns{\pd_t \gm}{2}),\\  
\av{\Ip{\pd_t \gm}{\Gl \pd^4_s \gm} }
 & \leq \Lns{\pd_t \gm}{\infty} \Lns{\pd_s \pd_t \gm}{2} \Lns{\pd^4_s \gm}{2} \\
 & \leq \sqrt{2} \Lns{\pd^2_s \pd_t \gm}{2}^\frac{3}{4} \Lns{\pd_t \gm}{2}^\frac{5}{4} \Lns{\pd^4_s \gm}{2} 
   \leq \ve \Lns{\pd^2_s \pd_t \gm}{2}^2 + C \Lns{\pd_t \gm}{2}^2 \Lns{\pd^4_s \gm}{2}^\frac{8}{5}.
\end{align*}
Hence we see that  
\begin{align} \label{gm-dt-2-3}
\av{I_1} 
 \leq \ve \Lns{\pd^2_s \pd_t \gm}{2}^2 + C \Lns{\pd_t \gm}{2}^2 (1+ \Lns{\pd_t \gm}{2}^2 ).
\end{align}
Letting $\ve>0$ sufficiently small and using \eqref{gm-dt-2}, \eqref{gm-dt-2-1}, \eqref{gm-dt-2-2}, and 
\eqref{gm-dt-2-3}, we have the inequality 
\begin{align} \label{d-ineq-1}
\pd_t \Lns{\pd_t \gm}{2}^2  
 \leq - \Lns{\pd^2_s \pd_t \gm}{2}^2 
        + C \Lns{\pd_t \gm}{2}^{2}(1 + \Lns{\pd_t \gm}{2}^2) . 
\end{align}
This implies that $\Lns{\pd_t \gm}{2} \to 0$ as $t \to +\infty$. 
In particular, $\Lns{\pd_t \gm}{2}$ is bonded for any $t > 0$. 
Then \eqref{d-ineq-1} is reduced to 
\begin{align} \label{d-ineq-2}
\pd_t \Lns{\pd_t \gm}{2}^2  
\leq - \Lns{\pd^2_s \pd_t \gm}{2}^2 + C \Lns{\pd_t \gm}{2}^{2}.  
\end{align}
Integrating \eqref{d-ineq-2} on $[0, \infty)$, we obtain 
\begin{align}
\int^{\infty}_0 \Lns{\pd^2_s \pd_t \gm}{2}^2 \, dt 
 \leq -\int^{\infty}_0 \pd_t \Lns{\pd_t \gm}{2}^2 \, dt 
    + C \int^{\infty}_0 \Lns{\pd_t \gm}{2}^{2} \, dt < \infty.     
\end{align}

Next, suppose that 
\begin{align*}
\int^\infty_0 \Lns{\pd^j_s \pd_t \gm}{2}^2\, dt < \infty, \qquad 
\Lns{\pd^{j-2}_s \pd_t \gm}{2} \to 0 \quad \text{as} \quad t \to \infty  
\end{align*}
hold for $2 \leq j \leq 2m$, where $m \geq 1$. 
From the assumption, 
we see that $\Lns{\pd^n_s \gm}{2}$ is bounded for any $t>0$ and $2 \leq n \leq 2m+2$. 
Since 
\begin{align*}
\pd_t \Lns{\pd^{2m}_s \pd_t \gm}{2}^2 
 & = \Ip{ 2 \pd^{2m}_s \pd_t \gm}{\pd_t \pd^{2m}_s \pd_t \gm} 
       + \Ip{\pd^{2m}_s \pd_t \gm}{\Gl \pd^{2 m}_2 \pd_t \gm} \\
 & = 2 \Ip{\pd^{4 m}_s \pd_t \gm}{\pd_t \pd_t \gm} 
        -2 \Ip{\pd^{2 m}_s \pd_t \gm}{\sum^{2 m-1}_{j=0} \pd^j_s(\Gl \pd^{2m-j}_s \pd_t \gm)}  
          + \Ip{\pd^{2m}_s \pd_t \gm}{ \Gl \pd^{2m}_s \pd_t \gm} \\
& := 2 I_1 + 2 I_2 + I_3,  
\end{align*}
it is sufficient to estimate the terms $I_1$, $I_2$, and $I_3$. 
Since $\Gl= \pd_s \pd_t \gm \cdot \pd_s \gm$, it is clear that 
\begin{align} \label{est-i3}
\av{I_3} 
 \leq C \Lns{\pd^{2m}_s \pd_t \gm}{2}^2. 
\end{align}
Concerning $I_2$, for $k= 0, 1, \cdots, 2m-1$, we have 
\begin{align*}
\av{\Ip{\pd^{2 m}_s \pd_t \gm}{ \pd^k_s \Gl \pd^{2m-k}_s \pd_t \gm} } 
 \leq C \Lns{\pd^{2m}_s \pd_t \gm}{2} \sum^{2m-1}_{l=m} \Lns{\pd^l_s \pd_t \gm}{2} 
  \leq \Lns{\pd^{2m}_s \pd_t \gm}{2}^2 + C \sum^{2m-2}_{l=m} \Lns{\pd^l_s \pd_t \gm}{2}^2. 
\end{align*}
Hence we obtain 
\begin{align} \label{est-i2}
\av{I_2} \leq \Lns{\pd^{2m}_s \pd_t \gm}{2}^2 + C \sum^{2m-2}_{l=m} \Lns{\pd^l_s \pd_t \gm}{2}^2. 
\end{align}
Concerning the term $I_1$, using \eqref{gf} and integrating by parts, $I_1$ is reduced to  
\begin{align*}
I_1 &=-2 \Lns{\pd^{2m+2}_s \pd_t \gm}{2}^2 
         + 2 \Ip{\pd^{4m}_s \pd_t \gm}{\sum^3_{j=0} \pd^j_s(\Gl \pd^{4-j}_s \pd_t \gm)} 
         + 3 \Ip{\pd^{4m+1}_s \pd_t \gm}{\pd_t (\av{\pd^2_s \gm}^2) \pd_s \gm} \\
      & \quad  - \Ip{\pd^{4m+1}_s \pd_t \gm}{(\lm^2 - 3 \av{\pd^2_s \gm}^2) \pd_s \pd_t \gm} 
         - \Ip{\pd^{4m}_s \pd_t \gm}{(\lm^2 - 3 \av{\pd^2_s \gm}^2) \sum^1_{j=0} \pd^j_s (\Gl \pd^{2-j}_s \gm)} \\
       & \qquad + 6 \Ip{\pd^{4m}_s \pd_t \gm}{\pd_s (\av{\pd^2_s \gm}^2) \Gl \pd_s \gm} \\
       &:= -2 \Lns{\pd^{2m+2}_s \pd_t \gm}{2}^2 + 2 I_{11} + 3 I_{12} - I_{13} - I_{14} + 6 I_{15}. 
\end{align*}
First we estimate $I_{12}$. 
Since 
\begin{align*}
\pd_t (\av{\pd^2_s \gm}^2 ) = 2 \pd^2_s \pd_t \gm \cdot \pd^2_s \gm + 4 \Gl \av{\pd^2_s \gm}^2, 
\end{align*}
we have 
\begin{align*} 
\av{I_{12}} & \leq 2 \av{\Ip{\pd^{4m+1}_s \pd_t \gm}{\pd^2_s \pd_t \gm \cdot \pd^2_s \gm}} 
                    + 4 \av{\Ip{\pd^{4m+1}_s \pd_t \gm}{\Gl \av{\pd^2_s \gm}^2}} \\
            & = 2 \av{\Ip{\pd^{2m+2}_s \pd_t \gm}{\pd^{2m-1}_s (\pd^2_s \pd_t \gm \cdot \pd^2_s \gm)}} 
                    + 4 \av{\Ip{\pd^{2m+2}_s \pd_t \gm}{\pd^{2m-1}_s (\Gl \av{\pd^2_s \gm}^2)}} \\
            & \leq C \Lns{\pd^{2m+2}_s \pd_t \gm}{2} \sum^{2m+1}_{j=1} \Lns{\pd^j_s \pd_t \gm}{2} \\
            & \leq \ve \Lns{\pd^{2m+2}_s \pd_t \gm}{2}^2 + C \Lns{\pd^{2m}_s \pd_t \gm}{2}^2 
                    + C \sum^{2m-2}_{j=1} \Lns{\pd^j_s \pd_t \gm}{2}^2. 
\end{align*}
Along the same line, we obtain 
\begin{align*} 
\max\{\av{I_{13}}, \av{I_{14}}, \av{I_{15}} \} 
 \leq \ve \Lns{\pd^{2m+2}_s \pd_t \gm}{2}^2 + C \Lns{\pd^{2m}_s \pd_t \gm}{2}^2 
        + C \sum^{2m-2}_{j=1} \Lns{\pd^j_s \pd_t \gm}{2}^2.
\end{align*}
Finally we turn to the estimate of $I_{11}$. 
We reduce the term $I_{11}$ to 
\begin{align*}
I_{11} & = 2 \sum^{2}_{j=0} \Ip{\pd^{4m}_s \pd_t \gm}{\pd^j_s (\Gl \pd^{4-j}_s \gm)}
            + 2 \Ip{\pd^{4m}_s \pd_t \gm}{\pd^3_s (\Gl \pd_s \gm)}
         := 2 J_1 + 2 J_2. 
\end{align*}
Moreover, by virtue of the relation $\pd_s \pd_t \gm \cdot \pd_s \gm =0$, $J_2$ is reduced to 
\begin{align*}
J_2 &= \Ip{\pd_t \gm}{\pd^{4m+3}_s (\Gl \pd_s \gm)} 
     = \Ip{\pd_t \gm}{\pd^{4m+3}_s (\Gl \pd_s \gm) - \pd^{4m+3}_s \Gl \pd_s \gm} \\
    &= \Ip{\pd^{2m+2}_s \pd_t \gm}{\pd^{2m+1}_s (\Gl \pd_s \gm)}
        - \Ip{\pd_t \gm}{\pd^{4m+3}_s \Gl \pd_s \gm} 
     := J_{21} - J_{22}.  
\end{align*} 
Since 
\begin{align*}
J_{21} &= \Ip{\pd^{2m+2}_s \pd_t \gm}{\pd^{2m+1}_s \Gl \pd_s \gm} 
           + \sum^{2m}_{j=0}\Ip{\pd^{2m+2}_s \pd_t \gm}{\pd^j_s \Gl \pd^{2m+1-j}_s \gm}, 
\end{align*}
and 
\begin{align*}
J_{22} &= \Ip{\pd^{2m+2}_s (\pd_t \gm \cdot \pd_s \gm)}{\pd^{2m+1}_s \Gl} \\
       &= \Ip{\pd^{2m+2}_s \pd_t \gm}{\pd^{2m+1}_s (\Gl \pd_s \gm)} 
           + \sum^{2m+1}_{j=0} \Ip{\pd^j_s \pd_t \gm}{\pd^{2m+1}_s \Gl \pd^{2m+3-j}_s \gm}, 
\end{align*}
$J_2$ is written as follows: 
\begin{align*}
J_2 = \sum^{2m}_{j=0}\Ip{\pd^{2m+2}_s \pd_t \gm}{\pd^j_s \Gl \pd^{2m+1-j}_s \gm} 
       - \sum^{2m+1}_{j=0} \Ip{\pd^j_s \pd_t \gm}{\pd^{2m+1}_s \Gl \pd^{2m+3-j}_s \gm}
    := K_1 + K_2. 
\end{align*}
Here we have 
\begin{align*}
\av{K_1} & \leq C \Lns{\pd^{2m+2}_s \pd_t \gm}{2} \sum^{2m+1}_{j=0} \Lns{\pd^j_s \pd_t \gm}{2} \\
         & \leq \ve \Lns{\pd^{2m+2}_s \pd_t \gm}{2}^2 + C \Lns{\pd^{2m}_s \pd_t \gm}{2}^2 
                 + C \sum^{2m-2}_{j=0} \Lns{\pd^j_s \pd_t \gm}{2}^2,  
\end{align*}
and 
\begin{align*}
\av{K_2} & \leq \sum^{2m+1}_{j=0} 
                \av{\Ip{\pd^j_s \pd_t \gm \cdot \pd^{2m+3-j}_s \gm}{\pd^{2m+1}_s(\pd_s \pd_t \gm \cdot \pd_s \gm)}}\\
         & \leq C \sum^{2m+1}_{j=1} \Lns{\pd^j_s \pd_t \gm}{2} \cdot 
                  \sum^{2m+2}_{k=1} \Lns{\pd^k_s \pd_t \gm}{2} \\
         & \leq \ve \Lns{\pd^{2m+2}_s \pd_t \gm}{2}^2 + C \Lns{\pd^{2m}_s \pd_t \gm}{2}^2 
                 + C \sum^{2m-2}_{l=0} \Lns{\pd^l_s \pd_t \gm}{2}^2. 
\end{align*}
Hence we obtain 
\begin{align} \label{est-j1}
\av{J_2} \leq \ve \Lns{\pd^{2m+2}_s \pd_t \gm}{2}^2 + C \Lns{\pd^{2m}_s \pd_t \gm}{2}^2 
                 + C \sum^{2m-2}_{l=0} \Lns{\pd^l_s \pd_t \gm}{2}^2. 
\end{align}
Along the same line, we get 
\begin{align} \label{est-j2}
\av{J_1} & \leq \ve \Lns{\pd^{2m+2}_s \pd_t \gm}{2}^2 + C \Lns{\pd^{2m}_s \pd_t \gm}{2}^2 
                 + C \sum^{2m-2}_{l=0} \Lns{\pd^l_s \pd_t \gm}{2}^2. 
\end{align}
The estimates \eqref{est-j1} and \eqref{est-j2} imply 
\begin{align}
\av{I_{11}} \leq \ve \Lns{\pd^{2m+2}_s \pd_t \gm}{2}^2 + C \Lns{\pd^{2m}_s \pd_t \gm}{2}^2 
                 + C \sum^{2m-2}_{l=0} \Lns{\pd^l_s \pd_t \gm}{2}^2. 
\end{align}
Therefore, letting $\ve>0$ sufficiently small, we see that 
\begin{align} \label{est-key-1}
\pd_t \Lns{\pd^{2m}_s \pd_t \gm}{2}^2 
 \leq - \Lns{\pd^{2m+2}_s \pd_t \gm}{2}^2 + C \Lns{\pd^{2m}_s \pd_t \gm}{2}^2 
                 + C \sum^{2m-2}_{l=0} \Lns{\pd^l_s \pd_t \gm}{2}^2. 
\end{align}
Integrating \eqref{est-key-1} with respect to $t$ on $[0,\infty)$, we have 
\begin{align*}
\int^\infty_0 \Lns{\pd^{2m+2}_s \pd_t \gm}{2}^2 \, dt < \infty. 
\end{align*} 
This completes the proof. 
\end{proof}
Making use of Lemmas \ref{l-est-deri-4}, \ref{l-est-deri-n}, and \ref{l-conv-0}, 
we prove the following: 
\begin{thm} \label{c-comv-thm}
Let $\gm$ be a solution of \eqref{comp-sys}. 
Then there exist a sequence $\{ t_i \}^{\infty}_{i=0}$ with $t_i \to \infty$ 
and a planar curve $\hat{\gm}$ such that $\gm(\cdot, t_i)$ converges to $\hat{\gm}(\cdot)$ 
up to a reparametrization in the $C^\infty$ topology as $t_i \to \infty$ . 
Moreover $\hat{\gm}$ is a stationary solution of \eqref{comp-sys}.  
\end{thm}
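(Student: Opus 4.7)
My strategy is threefold: (i) use the decay of $E$ to derive uniform-in-time bounds on $\mL(\gm(t))$ and on all $L^2_s$-norms of arc-length derivatives of $\gm$; (ii) reparametrize to a fixed interval $[0,1]$ and extract a $C^\infty$-subsequential limit by Arzel\`a-Ascoli; (iii) pass to the limit in $\pd_t \gm = -F^\lm \bn$ via Lemma \ref{l-conv-0} to identify the limit as stationary.

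First I would observe that the gradient-flow structure of \eqref{comp-sys} gives $\lm^2 \mL(\gm(t)) \leq E(\gm(t)) \leq E(\Gm_0)$, while the boundary condition forces $\mL(\gm(t)) \geq R$. Hence $L(t) := \mL(\gm(t))$ takes values in the compact interval $[R, E(\Gm_0)/\lm^2]$ for all $t \geq 0$. Combined with the uniform bound on $\Lns{\vk}{2} = \Lns{\pd^2_s \gm}{2}$ from Lemma \ref{k-est-lm-1}, and with the uniform (in fact vanishing) bounds on $\Lns{\pd^n_s \pd_t \gm}{2}$ provided by Lemma \ref{l-conv-0}, Lemmas \ref{l-est-deri-4} and \ref{l-est-deri-n} yield by induction in $n$ a uniform-in-time bound
\begin{align*}
\Lns{\pd^n_s \gm(\cdot, t)}{2} \leq C_n \qquad \text{for every } n \geq 2.
\end{align*}

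Next, choose a subsequence $t_i \to \infty$ with $L(t_i) \to L^\ast \in [R, E(\Gm_0)/\lm^2]$, parametrize $\gm(\cdot, t_i)$ by arc length, and rescale via $\wtl\gm_i(\sigma) := \gm(L(t_i)\sigma, t_i)$ for $\sigma \in [0,1]$. Since $\pd_\sigma^k \wtl\gm_i = L(t_i)^k \pd_s^k \gm$ and the interpolation inequality \eqref{inter-p-2} (applicable to $\pd^2_s \gm$ via Lemma \ref{l-gm-bd-m}) upgrades the $L^2_s$ bounds above to $L^\infty_s$ bounds at every order, the family $\{\wtl\gm_i\}$ is precompact in $C^k([0,1])$ for every $k$. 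A diagonal argument then produces a further subsequence converging in $C^\infty([0,1])$, and reparametrizing the limit by its own arc length yields the sought curve $\hat\gm : [0, L^\ast] \to \R^2$.

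Finally, to show $\hat\gm$ is stationary I would pass to the limit in $F^\lm = 2\pd^2_s \vk + \vk^3 - \lm^2 \vk$. Since $\pd_t \gm = -F^\lm \bn$, Lemma \ref{l-conv-0} gives $\Lns{\pd^n_s \pd_t \gm}{2} \to 0$ for every $n$; together with the uniform smooth control on $\bn$ inherited from that on $\gm$, this translates into $\Lns{\pd^n_s F^\lm}{2} \to 0$, which via \eqref{inter-p-2} upgrades to uniform convergence $F^\lm \to 0$. The $C^\infty$ convergence of $\wtl\gm_i$ then passes the limit into the nonlinear expression to give $2 \pd^2_s \hat\vk + \hat\vk^3 - \lm^2 \hat\vk \equiv 0$, while the endpoint conditions $\hat\gm(0)=(0,0)$, $\hat\gm(L^\ast)=(R,0)$, $\hat\vk(0)=\hat\vk(L^\ast)=0$ survive in the $C^\infty$ limit by pointwise continuity. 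The main delicate point is the interplay between the moving arc-length parametrization (whose domain $[0, L(t)]$ varies with $t$) and the $L^2_s$ bounds; it is precisely the uniform two-sided bound on $L(t)$ that prevents degeneration and lets the $L^2_s$ control on $\gm$ and $\pd_t \gm$ translate into genuine $C^\infty$ precompactness of $\{\wtl\gm_i\}$ on the fixed interval $[0,1]$.
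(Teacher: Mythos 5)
Your proposal is correct and follows essentially the same route as the paper: the two-sided length bound from energy decay and the boundary condition, the uniform $L^2_s$ bounds on $\pd^n_s\gm$ from Lemmas \ref{l-est-deri-4}, \ref{l-est-deri-n}, and \ref{l-conv-0}, the upgrade to $C^\infty$ bounds via the interpolation inequalities, Arzel\`a--Ascoli with a diagonal argument, and stationarity from $\Lns{\pd_t\gm}{2}\to 0$. The only cosmetic difference is that you extract the limit of the rescaled curves on $[0,1]$ directly, whereas the paper extracts the limit of the curvature functions and reconstructs $\hat\gm$ from $\hat\vk$ and the fixed endpoints; your explicit handling of the varying domain $[0,L(t)]$ is a welcome clarification of a point the paper passes over quickly.
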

\begin{proof}
Since it holds that 
\begin{align*}
R \leq \mL(\gm(\cdot,t)) 
  \leq \dfrac{1}{\lm^2} \left\{ \int^L_0 k_0^2 \, dx - \int_\gm \vk^2 \, ds  \right\} + \mL(\gm_0) < C, 
\end{align*}
we reparameterize $\gm$ by its arc length, i.e., $\gm= \gm(s,t)$. 
By virtue of Lemmas \ref{l-est-deri-4}, \ref{l-est-deri-n}, and \ref{l-conv-0}, we see that 
\begin{align} \label{c-1}
\Lns{\pd^n_s \gm(\cdot,t)}{2} < \infty 
\end{align}
for any integers $n \geq 2$. From Lemma \ref{l-inter-p}, the inequality \eqref{c-1} yields 
\begin{align*}
\Lns{\pd^n_s \gm(\cdot,t)}{\infty} < \infty.  
\end{align*}
Thus $\pd^n_s \vk$ is uniformly bounded with respect to $t$ for any non-negative integers $n$. 
Furthermore it follows from \eqref{c-1} that 
\begin{align*}
\av{\pd^n_s \vk(s_1,t) - \pd^n_s \vk(s_2,t)} 
 \leq \av{\int^{s_1}_{s_2} \pd^{n+1}_s \vk(s,t) \, ds} 
 \leq C \av{s_1 - s_2},  
\end{align*}
for each $n \in \N$, where the constant $C$ is independent of $t$. 
Thus $\pd^n_s \vk$ is equi-continuous with respect to $t$. 
Thus, there exist a sequence $\{ t_{1,j} \}_{j=1}^\infty$ and $\hat{\vk}(x)$ such that 
$\vk(\cdot,t_{1,j})$ uniformly converges to $\hat{\vk}(\cdot)$ as $t_{1,j} \to \infty$. 
Similarly, for each $n \in \N$, there exists a subsequence 
$\{t_{n,j} \}_{j=1}^\infty \subset \{ t_{n-1,j} \}_{j=1}^\infty$ such that $\pd^n_s \vk(\cdot,t)$ 
uniformly converges to $\pd^n_{\cdot} \hat{\vk}(\cdot)$ as $t_{n,j} \to \infty$. 
By virtue of the diagonal method, we see that there exist a sequence $\{ t_i \}_{i=1}^\infty$ and 
a function $\hat{\vk}(\cdot)$ such that $\vk(\cdot,t_i)$ converges to $\hat{\vk}(\cdot)$ in the 
$C^\infty$ topology. 
Since $\gm(\cdot,t)$ is fixed at the boundary, a curve $\hat{\gm}$ with curvature $\hat{\vk}$ is 
uniquely determined. Moreover, by Lemma \ref{l-conv-0}, $\pd_t \gm(\cdot,t)$ uniformly converges 
to $0$ as $t \to \infty$. Therefore the curve $\hat{\gm}$ is a stationary solution of \eqref{comp-sys}. 
\end{proof}

\section{Non compact case} \label{n-comp-case}
Let $\gm_0(x)= (\phi_0(x), \psi_0(x)) : \R \to \R^2$ be a smooth curve, 
and $\vk_0$ denote the curvature. 
Let $\gm_0(x)$ satisfy the following conditions: 
\begin{gather} 
\av{{\gm_0}'(x)} \equiv 1 \tag{A1} \label{cond-1} \\
\pd^m_x \vk_0 \in L^2(\R) \quad \text{\rm for all} \,\,\,\,  m \geq 0, \tag{A2} \label{cond-2} \\ 
\lmt{x \to \infty}{\phi_0(x)}= \infty, \quad \lmt{x \to -\infty}{\phi_0(x)}= -\infty,
\quad \lmt{\av{x} \to \infty}{\phi'_0(x)}=1, 
\tag{A3} \label{cond-3} \\ 
\psi_0(x) = O(x^{-\va}) \,\,\,\text{for \,some}\,\,\,\va > \frac{1}{2} \,\,\, \text{as} \,\,\, 
\av{x} \to \infty,\quad  \psi'_0 \in L^2(\R). \tag{A4} \label{cond-4}
\end{gather}
The definition of $\gm_0$ and \eqref{cond-1} imply that $\gm_0$ has infinite length. 
From \eqref{cond-2}, we see that $\gm_0$ approaches a straight line as $\av{x} \to \infty$. 
Furthermore \eqref{cond-3} and \eqref{cond-4} yield that the straight line is given by the axis. 
Indeed, by \eqref{cond-2} and \eqref{cond-3}, for sufficiently small $\rho>0$, 
there exists a constant $M>0$ such that  
\begin{align} \label{def-M}
\su{\av{x} \in (M,\infty)}{\av{\av{\phi'_0(x)} - 1}} < \rho, \quad \sup_{\av{x} \in (M, \infty)}\av{\psi_0(x)} < \rho, \quad 
\su{\av{x} \in (M,\infty)}{\av{\psi'_0(x)}} < \rho.  
\end{align}

To begin with, we prove that the shortening-straightening flow starting from $\gm_0$ has a 
classical solution for any finite time. 
As the first step, we shall construct an ``approximate solution". 
For this purpose, it starts from the definition of a cut-off function 
$\eta_r(x) \in C^\infty_c(\R)$: 
\begin{align*}
\eta_r(x) = 1 \quad & \text{for \, any} \quad \av{x} \in [0, r-1], \\
0 < \eta_r(x) < 1 \quad & \text{for \, any} \quad \av{x} \in (r-1, r), \\
\eta_r(x) = 0 \quad & \text{for \, any} \quad \av{x} \in [r, +\infty). 
\end{align*}
Using the cut-off function, we define a curve $\Gm_{0,r} : [-r, r] \to \R^2$ as  
\begin{align*}
\Gm_{0,r}(x)= (\phi_0(x), \eta_r(x) \psi_0(x) ) \Bigm|_{x \in [-r,r]}, 
\end{align*}
and we consider the following initial-boundary value problem: 
\begin{align} \label{comp-sys-r}
\begin{cases}
& \pd_t \gm  = (\lm^2 \vk - 2 \pd^2_s \vk - \vk^3 ) \bn, \\
& \gm(-r,t) = (\phi_0(-r),0), \quad \gm(r,t)  = (\phi(r), 0), \quad 
  \vk(-r,t)= \vk(r,t) = 0, \\ 
& \gm(x,0)=\Gm_{0,r}(x). 
\end{cases} \tag{${\rm SS}_r$}
\end{align}
We are able to verify that the compatibility condition of \eqref{comp-sys-r} holds. 
\begin{lem} \label{conv-as-a-infinity}
Let $r > M$. Then $\Gm_{0,r}(x)$ is smooth and satisfies 
\begin{align} \label{bd-cond-r}  
\Gm_{0,r}(-r)=(\phi_0(-r),0), \quad 
\Gm_{0,r}(r,0)= (\phi_0(r),0), \quad 
\vk_{0,r}(-r)= \vk_{0,r}(r)= 0, 
\end{align}
where $\vk_{0,r}$ denotes the curvature of $\Gm_{0,r}$. 
\end{lem}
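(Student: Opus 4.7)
The plan is to verify the three claims in the lemma directly using the properties of the cut-off function $\eta_r$ and the assumption $r > M$. The proof is essentially a routine compatibility check, so the main task is to organize it cleanly.

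First I would establish smoothness. Since $\phi_0,\psi_0 \in C^\infty(\R)$ by assumption and $\eta_r \in C^\infty_c(\R)$ by construction, the components $\phi_0(x)$ and $\eta_r(x)\psi_0(x)$ are products/compositions of smooth functions on $[-r,r]$, so $\Gm_{0,r} \in C^\infty([-r,r];\R^2)$.

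Next I would check the position boundary conditions. From the definition of $\eta_r$, we have $\eta_r(\pm r) = 0$, which immediately gives $\Gm_{0,r}(\pm r) = (\phi_0(\pm r),\, 0)$. A crucial observation to record here is that, because $\eta_r$ is identically zero on $\{|x| \ge r\}$ and is $C^\infty$ across $x = \pm r$, every derivative of $\eta_r$ vanishes at $x = \pm r$:
\begin{align*}
\eta_r^{(k)}(\pm r) = 0 \qquad \text{for all } k \ge 0.
\end{align*}
By the Leibniz rule, this propagates to the full second component: $(\eta_r \psi_0)^{(k)}(\pm r) = 0$ for every $k \ge 0$.

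For the curvature condition I would use the standard formula for a graph-type parametrization. Writing $\Gm_{0,r}(x) = (\phi_0(x), \eta_r(x)\psi_0(x))$ and setting $\tilde\psi := \eta_r \psi_0$, the curvature is
\begin{align*}
\vk_{0,r}(x) = \frac{\phi_0'(x)\,\tilde\psi''(x) - \tilde\psi'(x)\,\phi_0''(x)}{\bigl(\phi_0'(x)^2 + \tilde\psi'(x)^2\bigr)^{3/2}}.
\end{align*}
The hypothesis $r > M$ combined with \eqref{def-M} ensures $|\phi_0'(\pm r)| \ge 1 - \rho > 0$, so the denominator is bounded away from zero at the endpoints. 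The numerator at $x = \pm r$ vanishes because $\tilde\psi'(\pm r) = \tilde\psi''(\pm r) = 0$ by the observation above. Hence $\vk_{0,r}(\pm r) = 0$.

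I do not anticipate a genuine obstacle: the whole point of choosing a smooth compactly supported cut-off (rather than, say, a piecewise linear one) is precisely so that all derivatives of $\eta_r$ vanish at $\pm r$, trivializing the compatibility at the boundary. The only mildly delicate point worth flagging explicitly in the write-up is that $\phi_0'(\pm r) \ne 0$, which is where the constraint $r > M$ together with \eqref{def-M} enters to ensure the curvature denominator is nondegenerate.
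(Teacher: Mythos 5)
Your proposal is correct and follows essentially the same route as the paper: write $\vk_{0,r}$ as $\mathfrak{R}\Gm_{0,r}'\cdot\Gm_{0,r}''/\av{\Gm_{0,r}'}^3$ and observe that the numerator vanishes at $x=\pm r$ because every derivative of the compactly supported smooth cut-off $\eta_r$ vanishes there. Your explicit remark that $r>M$ and \eqref{def-M} guarantee $\phi_0'(\pm r)\neq 0$, so the denominator is nondegenerate, is a detail the paper leaves implicit but is a welcome addition.
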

\begin{proof}
Let $r > M$. 
By the definition of $\eta_r$, it is clear that $\Gm_{0,r}$ is smooth and 
$\Gm_{0,r}(-r)=(\phi_0(-r),0)$, 
$\Gm_{0,r}(r,0)= (\phi_0(r),0)$ hold. 
Furthermore, since the curvature $\vk_{0,r}(x)$ is written as 
\begin{align*}
\dfrac{\mathfrak{R}(\phi'_0(x), \pd_x \eta_r(x) \psi_0(x)+ \eta_r(x) \psi'_0(x))
          \cdot(\phi''_0(x), \pd^2_x \eta_r(x) \psi_0(x) + 2 \eta'_r(x) \psi'_0(x)  
                   + \eta_r(x) \psi''_0(x))}{\av{\Gm'_{0,r}(x)}^3}, 
\end{align*}
we observe that $\vk_{0,r}(-r)$ and $\vk_{0,r}(r)$ vanish. 
\end{proof}

Concerning \eqref{comp-sys-r}, we obtain the following: 
\begin{lem} \label{exist-pre-sol}
Let $r>M$. Then there exists a unique classical solution 
of \eqref{comp-sys-r} for any time $t>0$. 
Moreover, there exists a sequence $\{ t_i \}^\infty_{i=0}$ with $t_i \to \infty$ such that 
the solution converges to a stationary solution of \eqref{comp-sys-r} as $t_i \to \infty$ 
up to a reparametrization. 
\end{lem}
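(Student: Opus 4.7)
The plan is to deduce Lemma \ref{exist-pre-sol} directly from Theorem \ref{comp-gm-global} and Theorem \ref{c-comv-thm} by a simple change of variables. By Lemma \ref{conv-as-a-infinity}, the curve $\Gm_{0,r}$ is smooth with endpoints $\Gm_{0,r}(\pm r)=(\phi_0(\pm r),0)$ and $\vk_{0,r}(\pm r)=0$. However, $\Gm_{0,r}$ is \emph{not} arc-length parametrized: on the transition region $r-1<\av{x}<r$ the cut-off $\eta_r$ modifies the $\psi$-component while the $\phi$-component is inherited from the arc-length parametrized $\gm_0$, so in general $\av{{\Gm_{0,r}}'(x)}\neq 1$. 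Hence a reparametrization is required before we can invoke the results of Section \ref{c-c-w-f-b}, which assume condition \eqref{i-cond-cc}.

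First I would reparametrize $\Gm_{0,r}$ by arc length and translate horizontally so that the left endpoint sits at the origin. Concretely, let $L_r$ denote the length of $\Gm_{0,r}$, let $R_r:=\phi_0(r)-\phi_0(-r)>0$, and let $\hat\Gm_{0,r}:[0,L_r]\to\R^2$ be the curve obtained from $\Gm_{0,r}$ by arc-length reparametrization followed by the translation $(-\phi_0(-r),0)$. Then $\hat\Gm_{0,r}$ is smooth, arc-length parametrized, has endpoints $(0,0)$ and $(R_r,0)$, and its curvature vanishes at the endpoints, since both the curvature and its vanishing are invariant under reparametrization and rigid motion. Therefore $\hat\Gm_{0,r}$ satisfies \eqref{i-cond-cc} with $L=L_r$ and $R=R_r$.

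Next I would apply Theorem \ref{comp-gm-global} to produce a unique classical solution $\hat\gm(y,t)$ of the corresponding system \eqref{comp-sys} for all $t>0$, and then Theorem \ref{c-comv-thm} to extract a sequence $t_i\to\infty$ along which $\hat\gm(\cdot,t_i)$ converges in $C^\infty$, up to reparametrization, to a stationary solution. Composing $\hat\gm(\cdot,t)$ with the inverse of the initial arc-length reparametrization and translating back by $(+\phi_0(-r),0)$ yields a family $\gm(x,t)$ solving \eqref{comp-sys-r}: since \eqref{s-s-flow} has purely normal velocity $-F^\lm\bn$ and both $F^\lm$ and $\bn$ are intrinsic geometric quantities, the flow is invariant under rigid translations and under spatial reparametrizations of the initial data. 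The fixed-endpoint boundary conditions of \eqref{comp-sys-r} are preserved by construction, and uniqueness transfers the same way, since any other classical solution of \eqref{comp-sys-r} reparametrizes to a solution of the normalized system.

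I do not anticipate a genuine obstacle here: the statement is essentially a translation of the compact-case results to the cut-off initial data, and the only substantive verification is the compatibility encoded in Lemma \ref{conv-as-a-infinity} together with the observation that arc-length reparametrization and translation preserve smoothness, endpoint positions, and vanishing of curvature. The mildest point requiring care is the invariance of \eqref{s-s-flow} under spatial reparametrization, which holds because the evolution is written intrinsically in terms of $\vk$, $\pd_s$, and $\bn$.
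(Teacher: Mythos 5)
Your proposal is correct and takes essentially the same route as the paper, whose entire proof is the one-line citation of Lemma \ref{conv-as-a-infinity} and Theorem \ref{comp-gm-global} (with Theorem \ref{c-comv-thm} implicitly supplying the subconvergence). The only difference is that you explicitly carry out the arc-length reparametrization and translation needed to put $\Gm_{0,r}$ into the normalized form \eqref{i-cond-cc} — a point the paper silently glosses over — and your justification that this normalization is harmless (geometric invariance of the flow under reparametrization and rigid motion) is sound.
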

\begin{proof}
Lemma \ref{conv-as-a-infinity} and Theorem \ref{comp-gm-global} gives us the conclusion. 
\end{proof} 

In what follows, let $\gm_r(x,t)$ denote the solution of \eqref{comp-sys-r}, 
and $\vk_r(x,t)$ be the curvature of $\gm_r(x,t)$. 
In order to construct a solution of \eqref{s-s}, 
we apply Arzel\`a-Ascoli's theorem to $\{ \gm_r \}_{r > M}$. 
The point is to prove that $\vk_r$ is uniformly bounded with respect to $r$.  
\begin{lem} \label{bd-vk-r} 
There exists a positive constant $C$ being independent of $r$ such that 
\begin{align} 
\su{r \in (M,\infty)}{\Lns{\vk_r(t)}{2}} < C 
\end{align}
 for any $t>0$. 
\end{lem}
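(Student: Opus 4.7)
The plan is to exploit the gradient flow structure of \eqref{comp-sys-r} and compare against the linear lower bound for the length coming from the fixed Dirichlet endpoints. Since \eqref{comp-sys-r} is the steepest descent flow for $E(\gm) = \lm^2 \mL(\gm) + \mE(\gm)$, we have $E(\gm_r(\cdot,t)) \le E(\Gm_{0,r})$ for every $t>0$. Because $\gm_r(\cdot,t)$ joins the two fixed points $(\phi_0(-r),0)$ and $(\phi_0(r),0)$, its length satisfies $\mL(\gm_r(\cdot,t)) \ge |\phi_0(r) - \phi_0(-r)|$. Combining these two facts yields
\begin{align*}
\Lns{\vk_r(t)}{2}^2 \leq \Ln{\vk_{0,r}}{-r,r}^2 + \lm^2 \bigl[ \mL(\Gm_{0,r}) - \av{\phi_0(r) - \phi_0(-r)} \bigr],
\end{align*}
so it suffices to bound the right-hand side by a constant independent of $r > M$.

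For the curvature term, split $[-r,r]$ into the bulk $[-r+1, r-1]$, where $\eta_r \equiv 1$, and the two transition intervals of length one near $\pm r$. On the bulk $\Gm_{0,r}$ coincides with $\gm_0$ and is arc-length parametrized, so $\vk_{0,r} = \vk_0$ and the contribution is controlled by $\|\vk_0\|_{L^2(\R)}^2$, finite by \eqref{cond-2}. On each transition interval I plan to use the explicit formula for $\vk_{0,r}$ computed in Lemma \ref{conv-as-a-infinity} and observe that $\eta_r, \eta_r', \eta_r''$ admit bounds independent of $r$, while all of $\psi_0, \psi_0', \psi_0''$ and $1-\phi_0', \phi_0''$ are uniformly small on $[r-1,r]$ for $r$ large. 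Consequently $\vk_{0,r}$ is uniformly bounded on a set of length $2$, and the transition contribution is $O(1)$ in $r$.

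For the length excess I use the pointwise identity
\begin{align*}
\av{\Gm_{0,r}'(x)}^2 - \phi_0'(x)^2 = \bigl( \eta_r'(x) \psi_0(x) + \eta_r(x) \psi_0'(x) \bigr)^2,
\end{align*}
so that, by \eqref{cond-3},
\begin{align*}
\mL(\Gm_{0,r}) - \av{\phi_0(r) - \phi_0(-r)}
= \int_{-r}^{r} \frac{\bigl(\eta_r' \psi_0 + \eta_r \psi_0'\bigr)^2}{\av{\Gm_{0,r}'} + \phi_0'}\, dx.
\end{align*}
The contribution of $\eta_r \psi_0'$ is uniformly controlled by $\|\psi_0'\|_{L^2(\R)}^2$, finite by \eqref{cond-4}. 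The contribution of $\eta_r' \psi_0$ is supported on the two transition intervals, where $|\psi_0(x)| = O(r^{-\va})$ with $\va > 1/2$ by \eqref{cond-4}, so this piece is $O(r^{-2\va})$ and in particular uniformly bounded.

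The main technical point is the pointwise smallness of $\psi_0, \psi_0', \psi_0''$ on $[r-1,r]$ used above. Smallness of $\psi_0$ is direct from \eqref{cond-4}; for $\psi_0'$ I invoke the unit-speed relation $(\phi_0')^2 + (\psi_0')^2 \equiv 1$ together with $\phi_0'(x) \to 1$ from \eqref{cond-3}; for $\psi_0''$ I differentiate the unit-speed relation to obtain $\psi_0'' = \phi_0' \vk_0$, and conclude from \eqref{cond-2} (which implies $\vk_0$ and $\vk_0'$ are both in $L^2(\R)$, hence $\vk_0$ is uniformly continuous and tends to $0$ at infinity) that $\psi_0''(x) \to 0$ as $|x| \to \infty$. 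With these three ingredients in hand, the two estimates above combine to give a constant $C$, depending only on $\gm_0$ and $\lm$, with $\|\vk_r(t)\|_{L^2_s}^2 \le C$ for all $r > M$ and all $t > 0$.
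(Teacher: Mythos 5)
Your proposal is correct and follows the paper's overall strategy — the energy monotonicity $E(\gm_r(t))\le E(\Gm_{0,r})$, the lower bound $\mL(\gm_r)\ge \phi_0(r)-\phi_0(-r)$ from the pinned endpoints, and the splitting of $\Lns{\vk_{0,r}}{2}^2$ into a bulk part equal to $\Ln{\vk_0}{\R}^2$ plus two transition intervals of length one — but it handles the key step, the length excess $\mL(\Gm_{0,r})-(\phi_0(r)-\phi_0(-r))$, by a genuinely different and more elementary route. The paper fixes $b\in(M,r-1)$, treats $\Gm_{0,r}$ on $[b,r]$ as a variation of the straight segment $(\phi_0(x),0)$, and Taylor-expands $\mL$ to second order (the first variation vanishing because the base curve is a line), arriving at the bound $C\int_b^\infty(\av{\psi_0}^2+\av{\psi_0'}^2)\,dx$; this is where the hypothesis $\va>1/2$ in \eqref{cond-4} (i.e.\ $\psi_0\in L^2$ near infinity) is consumed. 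You instead use the pointwise algebraic identity
\begin{align*}
\av{\Gm_{0,r}'}-\phi_0'
 =\frac{\bigl(\eta_r'\psi_0+\eta_r\psi_0'\bigr)^2}{\av{\Gm_{0,r}'}+\phi_0'},
\end{align*}
which localizes the $\psi_0$-dependence to the transition intervals (where only pointwise smallness of $\psi_0$ is needed, not square-integrability) and charges the rest to $\Ln{\psi_0'}{\R}^2$; this is sharper in that it does not actually require $\va>1/2$ for this step. One point you should make explicit: the denominator $\av{\Gm_{0,r}'}+\phi_0'=1+\phi_0'$ is \emph{not} bounded below on the central compact region, since \eqref{cond-3} controls $\phi_0'$ only near infinity and $\phi_0'$ may approach $-1$ where the curve doubles back. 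The estimate survives because on that region the integrand equals $1-\phi_0'\le 2$ and the region $[-M,M]$ has length independent of $r$, but as written your claim that the $\eta_r\psi_0'$ contribution is controlled by $\Ln{\psi_0'}{\R}^2$ silently assumes a uniform lower bound on the denominator; a three-way case split (transition intervals, the far region $M\le\av{x}\le r-1$ where $1+\phi_0'\ge 2-\rho$, and the fixed compact core) closes this. The remaining ingredients — $\psi_0''=\vk_0\phi_0'$ from Frenet--Serret and the decay of $\vk_0$ at infinity from $\vk_0,\vk_0'\in L^2(\R)$ — are sound and match what the paper uses implicitly to bound the transition-interval curvature.
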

\begin{proof}
Let $r > M$. 
First recall that the inequality 
\begin{align} \label{up-bd}
\Lns{\vk_r}{2}^2 \leq 
\Lns{\vk_{0,r}}{2}^2 + \lm^2 \left\{ \mL(\Gm_{0,r}) - (\phi_0(r) - \phi_0(-r)) \right\}
\end{align}
holds. Concerning the first term of the right-hand side of \eqref{up-bd}, it holds that 
\begin{align*}
\Lns{\vk_{0,r}}{2}^2 
 &= \int^r_{-r} \av{\vk_{0,r}(x)}^2 \av{\pd_x \Gm_{0,r}(x)} \, dx \\
 &= \int^{-r+1}_{-r} \av{\vk_{0,r}(x)}^2 \av{\pd_x \Gm_{0,r}(x)} \, dx
     + \int^{r-1}_{-r+1} \av{\vk_0(x)}^2 \, dx 
     + \int^r_{r-1} \av{\vk_{0,r}(x)}^2 \av{\pd_x \Gm_{0,r}(x)} \, dx \\
 & \leq \Ln{\vk_0}{\R}^2 
    + \int^{-r+1}_{-r} \av{\vk_{0,r}(x)}^2 \av{\pd_x \Gm_{0,r}(x)} \, dx
    + \int^r_{r-1} \av{\vk_{0,r}(x)}^2 \av{\pd_x \Gm_{0,r}(x)} \, dx. 
\end{align*}
By virtue of Frenet-Serret's formula, \eqref{cond-1}, and \eqref{cond-2}, 
we see that $\phi^{(m)}$, $\psi^{(m)} \in L^2(\R)$ for any integer $m \geq 2$. 
Combining the fact with the expression of $\vk_{0,r}$, we see that 
\begin{align*}
\int_{[-r, -r+1] \cup [r-1,r]} \av{\vk_{0,r}(x)}^2 \av{\pd_x \Gm_{0,r}(x)} \, dx <  C, 
\end{align*}
where the constant $C$ depends only on $\gm_0$. 
This yields that 
\begin{align*}
\Lns{\vk_{0,r}}{2}^2 < \Ln{\vk_0}{\R}^2 + C 
\end{align*}
holds for any $r > M$. 

In order to obtain the conclusion, we turn to a estimate of the second term 
in the right-hand side of \eqref{up-bd}. 
Let us fix $b \in (M, r-1)$ arbitrarily. Then we have 
\begin{align} \label{key-est}
& \mL(\Gm_{0,r}) - (\phi_0(r)-\phi_0(-r)) \\
& \quad = \left\{ \mL_1(\Gm_{0,r}) - (\phi_0(b)-\phi_0(-b)) \right\}  
      + \left\{ \mL_2^+(\Gm_{0,r}) - (\phi_0(r)-\phi_0(b)) \right\} \notag \\ 
& \qquad \quad + \left\{ \mL_2^- (\Gm_{0,r}) - (\phi_0(-b)-\phi_0(-r)) \right\},  \notag
\end{align}
where 
\begin{align*}
\mL_1(\Gm_{0,r})&= \int^b_{-b} \av{\pd_x \Gm_{0,r}(x)} \, dx = 2b, \\ 
\mL_2^+(\Gm_{0,r})&= \int^r_b \av{\pd_x \Gm_{0,r}(x)} \, dx, \\
\mL_2^-(\Gm_{0,r})&= \int^{-b}_{-r} \av{\pd_x \Gm_{0,r}(x)} \, dx. 
\end{align*}
The first term in the right hand side of \eqref{key-est} is bounded independently of $r$. 
In the following, we focus on the second term.  

From \eqref{def-M}, for any $r > M$, we see that 
$\Gm_{0,r}(x)$ is expressed as a variation of line in the interval $[b, r]$. 
Here we derive a variational formula for $\mL$ in a general case. 
Let $\Gm(x) : [b,r] \to \R^2$ be a straight line. 
For $\vp \in C^{\infty}((-\ve_0, \ve_0); C^\infty[b,r])$ 
with $\vp(x,0) \equiv 0$ and $\vp(r, \ve) \equiv 0$, 
we consider a variation 
\begin{align*}
\Gm(x,\ve) = \Gm(x) + \vp(x,\ve), 
\end{align*}
where $\Gm(b, \ve)$ is on the straight line orthogonally intersecting with $\Gm(x)$ 
at $x=b$ for any $\ve>0$. Concerning the variation, it holds that  
\begin{align} \label{expan-1}
\mL(\Gm(\cdot,\ve)) 
  &= \mL(\Gm(\cdot)) + \dfrac{d}{d \ve} \mL(\Gm(\cdot, \ve)) \bigm|_{\ve=0} \ve 
       + \dfrac{d^2}{d \ve^2} \mL(\Gm(\cdot,\ve)) \bigm|_{\ve=\vs} \ve^2, 
\end{align}
where $\av{\vs} < \av{\ve}$. Concerning the first variation, we have 
\begin{align} \label{1st-vari-1}
\dfrac{d}{d \ve} \mL(\Gm(\cdot, \ve)) 
= \int^r_b 
  \dfrac{\{ \Gm'(x) + \vp'(x,\ve) \} \cdot \vp'_\ve (x,\ve)}{\av{\Gm'(x) + \vp'(x,\ve)}}\, dx. 
\end{align}
Integrating by parts and letting $\ve=0$, \eqref{1st-vari-1} is reduced to 
\begin{align*}
\dfrac{d}{d \ve} \mL(\Gm(\cdot, \ve)) \biggm|_{\ve=0}
 = \left[ \dfrac{\pd_x \Gm(x)}{\av{\pd_x \Gm(x)}} \cdot \vp_\ve (x,0) \right]^{r}_{b}
   -\int^r_b \left( \dfrac{\Gm'(x)}{\av{\Gm'(x)}} \right)' \cdot \vp_\ve(x,0) \, dx = 0. 
\end{align*}
For $\Gm(x)$ is a straight line. 
Next, concerning the second variation, we have 
\begin{align*}
\dfrac{d^2}{d \ve^2} \mL(\Gm(\cdot, \ve)) 
 = \int^r_b \left\{ \dfrac{\av{\vp'_\ve(x,\ve)}^2}{\av{\Gm'(x,\ve)}} 
     - \dfrac{(\Gm'(x,\ve) \cdot \vp'_\ve(x,\ve))^2}{\av{\Gm'(x,\ve)}^3} \right\} \, dx . 
\end{align*}
Here, in particular, we set 
\begin{align} \label{setting}
\Gm(x) = (\phi_0(x),0), \quad \vp(x,\ve) 
       = \left( 0, \frac{2 \ve}{\ve_0} \eta_r(x) \psi_0(x) \right).  
\end{align} 
Since $\Gm(x,\ve_0/2) = \Gm_{0,r}(x)$, the relation \eqref{expan-1} gives us the following: 
\begin{align} \label{expan-2}
\mL_2^+(\Gm_{0,r}) - \left\{ \phi_0(r)-\phi_0(b) \right\} 
& \leq \dfrac{{\ve_0}^2}{2} \int^r_b \dfrac{\av{\vp'_\ve(x,\vs)}^2}{\av{\Gm'(x,\vs)}} \, dx. 
\end{align}
Under \eqref{setting}, we have $\av{\Gm'(x, \vs)} > 1- \rho$. 
Thus the right hand side of \eqref{expan-2} is estimated as follows:   
\begin{align*}
\int^r_b \dfrac{\av{\vp'_\ve(x,\vs)}^2}{\av{\Gm'(x,\vs)}} \, dx 
   \leq C \int^r_b \left\{ \av{\psi_0(x)}^2 + \av{\psi'_0(x)}^2 \right\} \, dx. 
\end{align*}
Consequently we see that  
\begin{align} \label{expan-4}
\mL^+_2(\Gm_{0,r}) - \left\{ \phi_0(r)-\phi_0(b) \right\}  
 \leq C \int^\infty_b \left\{ \av{\psi_0(x)}^2 + \av{\psi'_0(x)}^2 \right\} \, dx. 
\end{align}
Along the same line as above, we find 
\begin{align} \label{expan-5}
\mL^-_2(\Gm_{0,r}) - \left\{ \phi_0(-b)-\phi_0(-r) \right\}  
 \leq C \int^{-b}_{-\infty} \left\{ \av{\psi_0(x)}^2 + \av{\psi'_0(x)}^2 \right\} \, dx. 
\end{align}
Combining the estimates \eqref{expan-4}-\eqref{expan-5} with condition \eqref{cond-3}, we obtain 
\begin{align*}
\su{r \in (M, \infty)}{\left\{\mL(\Gm_{0,r}) - \left\{ \phi_0(r) - \phi(-r) \right\} \right\}} 
   < \infty.  
\end{align*}
This implies $\su{r \in (M, \infty)}{\Lns{\vk_r}{2}} < \infty$. 
\end{proof}
Making use of Lemma \ref{bd-vk-r}, we obtain a estimate for $\Lns{\pd^m_s \vk_r}{2}$: 
\begin{lem} \label{bd-m-vk-r}
Let $r > M$. Then, for any $m \in \N$, there exist constants $C_1>0$ and $C_2>0$ being 
independent of $r$ such that  
\begin{align*}
\su{r \in (M, \infty)}{\Lns{\pd^m_s \vk_r(t)}{2}} 
 \leq C_1 + C_2 t.   
\end{align*}
\end{lem}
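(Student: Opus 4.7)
The plan is to apply Lemma \ref{high-energy} to the compact-case solution $\gm_r$ of \eqref{comp-sys-r} (to which that lemma applies, since its constants depend only on $j$ and not on the boundary data), obtaining
\begin{align*}
\dfrac{d}{dt}\Lns{\pd^j_s \vk_r}{2}^2 \leq C \Lns{\vk_r}{2}^{4j+6} + C \Lns{\vk_r}{2}^{4j+10}.
\end{align*}
By Lemma \ref{bd-vk-r}, $\sup_{r>M}\Lns{\vk_r(t)}{2}$ is bounded by a constant independent of $t$ and $r$, so the right-hand side above is bounded by a constant $C_m$ depending only on $m$ (and on $\gm_0$ via Lemma \ref{bd-vk-r}). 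Integrating in time yields
\begin{align*}
\Lns{\pd^m_s \vk_r(t)}{2}^2 \leq \Lns{\pd^m_s \vk_{0,r}}{2}^2 + C_m t.
\end{align*}

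The remaining step is to bound $\Lns{\pd^m_s \vk_{0,r}}{2}$ uniformly in $r$. On the ``interior'' $|x|\leq r-1$, $\eta_r\equiv 1$, so $\Gm_{0,r}=\gm_0$ there and the contribution of this region to $\Lns{\pd^m_s \vk_{0,r}}{2}^2$ is dominated by $\Ln{\pd^m_x \vk_0}{\R}^2<\infty$ thanks to hypothesis \eqref{cond-2}. On the cutoff regions $[-r,-r+1]\cup[r-1,r]$, the quantity $\pd^m_s \vk_{0,r}$ is a rational expression in the derivatives $\phi_0^{(k)}$, $\psi_0^{(k)}$, $\eta_r^{(k)}$ (with $0\leq k\leq m+2$) and in $|\pd_x \Gm_{0,r}|$; the latter is bounded away from $0$ uniformly in $r$ by \eqref{def-M}. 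Choosing the family $\eta_r$ as translates of a single fixed cutoff, the derivatives $\eta_r^{(k)}$ are uniformly bounded in $r$. As observed in the proof of Lemma \ref{bd-vk-r}, Frenet-Serret together with \eqref{cond-1}--\eqref{cond-2} yields $\phi_0^{(k)},\psi_0^{(k)}\in L^2(\R)$ for every $k\geq 2$, so the contributions from the two cutoff windows are bounded by a constant depending only on $\gm_0$ and $m$.

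Combining these two estimates, we obtain $\Lns{\pd^m_s \vk_r(t)}{2}^2 \leq A_m + C_m t$ with $A_m, C_m$ independent of $r$. Taking square roots and using $\sqrt{A_m + C_m t}\leq \sqrt{A_m}+\sqrt{C_m t}\leq (\sqrt{A_m}+\tfrac{1}{4}C_m)+C_m t$, the desired bound $\Lns{\pd^m_s \vk_r(t)}{2}\leq C_1+C_2 t$ follows.

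The main obstacle is the uniform-in-$r$ control of the cutoff contribution to the initial data $\Lns{\pd^m_s \vk_{0,r}}{2}$; this is purely a combinatorial bookkeeping check that the rational expression for $\pd^m_s \vk_{0,r}$ in terms of $\phi_0^{(k)}$, $\psi_0^{(k)}$, $\eta_r^{(k)}$ stays square-integrable over the two unit-length cutoff intervals, uniformly in $r$. The dynamical part of the proof (integrating Lemma \ref{high-energy} against the uniform $L^2$ bound on $\vk_r$) is essentially automatic once Lemmas \ref{high-energy} and \ref{bd-vk-r} are in hand.
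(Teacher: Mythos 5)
Your proposal is correct and follows essentially the same route as the paper: apply the differential inequality of Lemma \ref{high-energy} to $\gm_r$, bound the right-hand side uniformly in $r$ and $t$ via Lemma \ref{bd-vk-r}, integrate in time, and control the initial datum $\Lns{\pd^m_s \vk_{0,r}}{2}$ uniformly in $r$. The only difference is that you spell out the interior/cutoff-window decomposition for the initial-data bound, which the paper compresses into the single line $\Lns{\pd^m_s \vk_{0,r}}{2} \leq \Ln{\pd^m_x \vk_0}{\R} + C$.
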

\begin{proof}
Let $r > M$. Along the same line as in the proof of Lemma \ref{high-energy}, we have 
\begin{align*}
\dfrac{d}{dt} \Lns{\pd^m_s \vk_r}{2}^2  
 \leq C \Lns{\vk_r}{2}^{4m+6} + C \Lns{\vk_r}{2}^{4m+10} . 
\end{align*}
Then, Lemma \ref{bd-vk-r} yields that 
\begin{align*}
\Lns{\pd^m_s \vk_r(t)}{2}^2 
 \leq \Lns{\pd^m_s \vk_{0,r}}{2}^2 + C t.  
\end{align*}
Since $\Lns{\pd^m_s \vk_{0,r}}{2} \leq \Ln{\pd^m_x \vk_0}{\R} + C$, we obtain the conclusion. 
\end{proof}

Next we show estimates on the local length of $\gm_r$: 
\begin{lem} \label{ele-bd-lm}
Let $T>0$ be any positive number. 
Then there exist positive constants $C_1$ and $C_2$ being independent of $r$ 
such that the inequalities 
\begin{align} 
& \dfrac{1}{C_1(T,\gm_0)} \leq \av{\pd_x \gm_r(x,t)} \leq C_1(T,\gm_0), \label{ele-bd-1} \\
& \av{\pd^m_x \av{\pd_x \gm_r(x,t)}} \leq C_2(T, \gm_0) \label{ele-bd-2}
\end{align}
hold for any $(x,t) \in [-r, r] \times [0, T]$ and any integer $m > 1$.  
\end{lem}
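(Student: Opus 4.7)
The plan is to follow the argument of Lemma \ref{ele-bd-lm-1} verbatim, keeping careful track of which constants depend only on $T$ and $\gm_0$ and not on $r$. The two key inputs that make the previous argument uniform in $r$ are already in place: Lemma \ref{bd-vk-r} gives an $r$-independent bound on $\Lns{\vk_r}{2}$ valid for all $t>0$, and Lemma \ref{bd-m-vk-r} gives $\Lns{\pd_s^m \vk_r}{2}\le C_1+C_2 t$ uniformly in $r$. Combined with the interpolation inequalities of Lemma \ref{l-inter-p} (applicable thanks to Lemma \ref{l-odd-k}, since the boundary conditions in \eqref{comp-sys-r} are the same as those of \eqref{comp-sys}), every quantity of the form $\Lns{\pd_s^m \vk_r}{\infty}$ is bounded by a constant depending only on $T$ and $\gm_0$.

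First I would redo the computation that led to \eqref{eq-le-1-1} and \eqref{od-eq-1-1}: for each $x\in[-r,r]$ the quantity $u(t):=\av{\pd_x \gm_r(x,t)}$ solves
\begin{align*}
\dfrac{du}{dt}=F(\vk_r)\,u,\qquad u(0)=\av{\pd_x \Gm_{0,r}(x)},
\end{align*}
where $F(\vk_r)=-\vk_r(-2\pd_s^2\vk_r-\vk_r^3+\lm^2\vk_r)$. The initial value is uniformly bounded in $r$: on $[-r+1,r-1]$ we have $\Gm_{0,r}=\gm_0$ so $u(0)=1$, and on the two cutoff strips the explicit formula for $\Gm_{0,r}$ together with the decay \eqref{cond-4} of $\psi_0$, $\psi_0'$ gives $1/C\le |\pd_x\Gm_{0,r}(x)|\le C$ with $C$ depending only on $\gm_0$. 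The coefficient $F(\vk_r)$ is controlled by
\begin{align*}
|F(\vk_r)|\le 2\Lns{\vk_r}{\infty}\Lns{\pd_s^2\vk_r}{\infty}+\Lns{\vk_r}{\infty}^4+\lm^2\Lns{\vk_r}{\infty}^2,
\end{align*}
and each factor on the right is bounded by a constant depending only on $T$ and $\gm_0$ via Lemmas \ref{bd-vk-r}, \ref{bd-m-vk-r} and the interpolation inequality \eqref{inter-p-2}. Integrating the ODE then yields \eqref{ele-bd-1} with a constant $C_1(T,\gm_0)$ independent of $r$.

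For \eqref{ele-bd-2} I would proceed by induction on $m$, exactly as in Lemma \ref{ele-bd-lm-1}. Differentiating $\pd_t \av{\pd_x\gm_r}=F(\vk_r)\av{\pd_x\gm_r}$ in $x$ produces
\begin{align*}
\pd_t\pd_x^m\av{\pd_x\gm_r}=F(\vk_r)\,\pd_x^m\av{\pd_x\gm_r}+\sum_{j=1}^{m}\binom{m}{j}\pd_x^j F(\vk_r)\,\pd_x^{m-j}\av{\pd_x\gm_r},
\end{align*}
which is a linear ODE in $v(t):=\pd_x^m\av{\pd_x\gm_r}(x,t)$ with source term $G_m$. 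The identity of type \eqref{bd-1-1} lets one express $\pd_x^j F(\vk_r)$ in terms of $\pd_s^j F(\vk_r)$ and the lower derivatives $\pd_x^k\av{\pd_x\gm_r}$ for $k\le j-1$, all of which are controlled uniformly in $r$ by the induction hypothesis together with Lemma \ref{bd-m-vk-r} and \eqref{inter-p-2}. Hence $|G_m|\le C(T,\gm_0)$, and solving the ODE for $v$ with initial value $\pd_x^m\av{\pd_x\Gm_{0,r}(x)}$ (uniformly bounded in $r$ by the same decay argument applied to $\Gm_{0,r}$) yields \eqref{ele-bd-2}.

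The only real obstacle is verifying the $r$-uniformity of the initial data $\pd_x^m|\pd_x\Gm_{0,r}|$ and the source terms in the higher-order ODEs. Both rely on the decay hypotheses \eqref{cond-2}--\eqref{cond-4}, which ensure that all derivatives of $\psi_0$ (hence of $\Gm_{0,r}$ via the cutoff $\eta_r$) are $L^\infty$-bounded uniformly in $r$ on the cutoff strips $[r-1,r]\cup[-r,-r+1]$. Everything else is a direct rerun of the argument in Lemma \ref{ele-bd-lm-1}.
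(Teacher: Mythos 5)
Your proposal is correct and takes essentially the same approach as the paper: derive the ODE $\pd_t\av{\pd_x\gm_r}=F(\vk_r)\av{\pd_x\gm_r}$, bound $F(\vk_r)$ uniformly in $r$ via Lemmas \ref{bd-vk-r}, \ref{bd-m-vk-r} and the interpolation inequalities, and then induct on $m$ using the identity of type \eqref{bd-1} for the higher $x$-derivatives. The only (harmless) difference is that you track the initial data $u(0)=\av{\pd_x\Gm_{0,r}(x)}$ and $v(0)=\pd^m_x\av{\pd_x\Gm_{0,r}(x)}$ on the cutoff strips, where they need not equal $1$ and $0$ respectively; the paper writes $u(0)=1$, $v(0)=0$, and your observation that the decay hypotheses \eqref{cond-2}--\eqref{cond-4} give $r$-uniform bounds there is the correct way to close that small gap.
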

\begin{proof} 
First we prove \eqref{ele-bd-1}. Since 
\begin{align*}
\pd_x \pd_t \gm_r 
& = \pd_x \left( -2 \pd^2_s \vk_r - {\vk_r}^3 + \lm^2 \vk_r \right) \bn_r 
     + \left( -2 \pd^2_s \vk_r - {\vk_r}^3 + \lm^2 \vk_r \right) \pd_x \bn_r, 
\end{align*}
and $\pd_x \bn_r = \av{\pd_x \gm_r} \pd_s \bn_r = - \av{\pd_x \gm_r} \vk_r \pd_s \gm_r$, 
we have 
\begin{align} \label{eq-le-1}
\pd_t \av{\pd_x \gm_r} 
 = \dfrac{\pd_x \gm_r \cdot \pd_x \pd_t \gm_r}{\av{\pd_x \gm_r}} 
 = - \vk_r \left( -2 \pd^2_s \vk_r - {\vk_r}^3 + \lm^2 \vk_r \right) \av{\pd_x \gm_r}. 
\end{align}
Thus $\av{\pd_x \gm_r}$ satisfies the initial value problem 
\begin{align} \label{od-eq-1}
\begin{cases}
\dfrac{d u}{dt} = F(\vk_r) u, \\
u(0) = 1, 
\end{cases}
\end{align}
where 
\begin{align*}
F(\vk_r) = - \vk_r \left( -2 \pd^2_s \vk_r - {\vk_r}^3 + \lm^2 \vk_r \right). 
\end{align*}
By virtue of Lemmas \ref{l-inter-p} and \ref{bd-m-vk-r}, 
there exists a constant $C$ being independent of $r$ such that 
$\av{F(\vk_r)} \leq C(T, \gm_0)$ for any $(x,t) \in [-r,r] \times [0, T]$. 
Hence, for any $(x,t) \in [-r,r] \times [0,T]$, we have 
\begin{align*}
e^{-C T} \leq \av{\pd_x \gm_r(x,t)} \leq e^{C T}. 
\end{align*}
Next we turn to the proof of \eqref{ele-bd-2}. 
Here we have 
\begin{align} \label{bd-1}
\pd^m_x F(\vk_r) - \av{\pd_x \gm_r}^m \pd^m_s F(\vk_r) 
 = P(\av{\pd_x \gm_r}, \cdots, \pd^{m-1}_x \av{\pd_x \gm_r}, F(\vk_r), \cdots, \pd^{m-1}_s F(\vk_r)). 
\end{align}
Suppose that there exist constants $C_j(T, \gm_0)$ being independent of $r$ such that 
\begin{align*}
\sup_{(x,t) \in [0,r] \times [0,T]} \av{\pd^j_x \av{\pd_x \gm_r} } \leq C_j (T,\gm_0) 
\end{align*}
holds for any $0 \leq j \leq m-1$. Then \eqref{bd-1} implies 
\begin{align*}
\av{\pd^m_x F(\vk_r)} < C, 
\end{align*}
where the constant $C$ is independent of $r$. 
Differentiating the equation \eqref{eq-le-1} with respect to $x$, we have 
\begin{align*}
\pd_t \pd^m_x \av{\pd_x \gm_r} 
  = F(\vk_r) \pd^m_x \av{\pd_x \gm_r} 
     + \sum^{m}_{j=1} \mbox{}_m C_j \pd^j_x F(\vk_r) \pd^{m-j}_x \av{\pd_x \gm_r}. 
\end{align*}
Thus $\pd^m_x \av{\pd_x \gm_r}$ is a solution of 
\begin{align} \label{od-eq-2}
\begin{cases}
& \pd_t v = F(\vk_r) v + G, \\
& v(0) = 0. 
\end{cases}
\end{align}
Then we see that there exists a constant $C_2(T,\gm_0)$ being independent of $r$ such that $\av{v} \leq C_2$. 
This gives us the conclusion of Lemma \ref{ele-bd-lm}. 
\end{proof}

In order to state our main result precisely, we define the following: 
\begin{dfn}
Let $\gm(x) : \R \to \R^2$ be a planar curve. 
$\gm$ is called proper if 
$\displaystyle{\lmt{\av{x} \to + \infty}{\av{\gm(x)}}}= + \infty$. 
\end{dfn}
We are now in a position to prove an existence of a classical solution to 
\begin{align*} 
\begin{cases}
& \pd_t \gm = \left( -2 \pd^2_s \vk - \vk^3 + \lm^2 \vk \right) \bn, \\
& \gm(x,0)= \gm_0(x) 
\end{cases}
\end{align*}
for any finite time: 
\begin{thm} \label{main-thm-1}
Let $\gm_0(x)$ be a proper planar curve satisfying \eqref{cond-1}--\eqref{cond-4}. 
Then there exist a family of smooth proper planar curves 
$\gm(x,t) : \R \times [0, \infty) \to \R^2$ satisfying \eqref{s-s}. 
Moreover the following holds{\rm :} 
\begin{enumerate}
\item[{\rm (i)}] There exists a positive constant $K$ being independent of $t$ such that 
\begin{align} \label{ub}
\max{\left\{ \Lns{\pd^n_s \vk(t)}{2}, \quad 
\Lns{\pd^n_s \vk(t)}{\infty} \right\}} < K 
\end{align}
for any $n \in \N \cup \{0\}$, where $\vk$ denotes the curvature of $\gm$.  
\item[{\rm (ii)}] Let ${\bf e}= (0,1)$. As $\av{x} \to \infty$, 
\begin{align} \label{approach}
\gm(x,t) \cdot {\bf e} \to 0, \quad \pd_x \gm(x,t) \cdot {\bf e} \to 0 
\end{align}
for any $t>0$. 
\end{enumerate} 
\end{thm}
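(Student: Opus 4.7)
The overall strategy is to pass to the limit $r \to \infty$ in the approximate solutions $\gm_r$ provided by Lemma \ref{exist-pre-sol}, and then upgrade the resulting time-local estimates to uniform-in-$t$ ones via a coercive energy inequality.

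\textbf{Step 1 (Compactness and construction of $\gm$).} From Lemmas \ref{bd-vk-r} and \ref{bd-m-vk-r}, for any fixed $T>0$ and any $m \in \N$, $\Lns{\pd^m_s \vk_r(t)}{2}$ is bounded on $[0,T]$ uniformly in $r > M$. Lemma \ref{l-inter-p} then yields uniform control on $\Lns{\pd^m_s \vk_r(t)}{\infty}$ for every $m$. Combining this with the two-sided bounds on $\av{\pd_x \gm_r}$ and on $\av{\pd^m_x \av{\pd_x \gm_r}}$ from Lemma \ref{ele-bd-lm}, and with the identity \eqref{gm-bd-x}, I obtain uniform bounds on $\Cnt{\gm_r}{m}{[-R,R]\times[0,T]}$ for every $R, T > 0$ and every $m \in \N$. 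Using the equation \eqref{s-s-flow} to trade spatial for temporal derivatives gives analogous bounds on mixed derivatives. By a standard Arzel\`a--Ascoli plus diagonal subsequence argument I extract $r_k \to \infty$ and a smooth $\gm(x,t) : \R \times [0,\infty) \to \R^2$ with $\gm_{r_k} \to \gm$ in $C^\infty_{\rm loc}$. The limit solves \eqref{s-s-flow}, and since $\Gm_{0,r}(x) = \gm_0(x)$ for all $r > \av{x}+1$, the initial condition $\gm(\cdot,0) = \gm_0$ is inherited. Properness follows from \eqref{cond-3} and the $C^0_{\rm loc}$ convergence together with an a priori bound $\av{\pd_x \gm} \geq c > 0$ on any $[-R,R]$.

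\textbf{Step 2 (Uniform-in-$t$ estimates, part (i)).} The bound from Lemma \ref{bd-m-vk-r} grows linearly in $t$; I upgrade it by refining Lemma \ref{high-energy}. Returning to Lemma \ref{l-deri-high}, the dissipative terms $-2\Lns{\pd^{j+2}_s \vk}{2}^2 - 2\lm^2 \Lns{\pd^{j+1}_s \vk}{2}^2$ can absorb every polynomial remainder after interpolation, provided the leading order on the remainder does not exceed the leading dissipation. The computations in the proof of Lemma \ref{high-energy} show the absorption constants depend only on $\Lns{\vk}{2}$, which is \emph{uniformly} bounded in both $r$ and $t$ by Lemma \ref{bd-vk-r}. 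A further application of interpolation $\Lns{\pd^j_s \vk}{2}^2 \leq \epsilon \Lns{\pd^{j+1}_s \vk}{2}^2 + C_\epsilon \Lns{\vk}{2}^2$ yields a coercive differential inequality
\begin{align*}
\frac{d}{dt} \Lns{\pd^j_s \vk_r}{2}^2 \leq -c_j \Lns{\pd^j_s \vk_r}{2}^2 + C_j ,
\end{align*}
with $c_j, C_j$ independent of $r$ and $t$. Gronwall then gives $\Lns{\pd^j_s \vk_r(t)}{2} \leq \max\{\Ln{\pd^j_x \vk_{0,r}}{\R},\, \sqrt{C_j/c_j}\} \leq K_j$ uniformly in $r, t$, and the bound passes to the limit $\gm$. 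The $L^\infty_s$ half of \eqref{ub} then follows from Lemma \ref{l-inter-p}.

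\textbf{Step 3 (Asymptotic flatness, part (ii); main difficulty).} This is the delicate part, because neither the energy $E$ nor the $L^2_s$-norms of $\pd^m_s \vk$ give pointwise control on $\gm \cdot {\bf e}$ at infinity. My plan is to write $\gm_r(x,t) = (\phi_r(x,t), \psi_r(x,t))$, exploit $\psi_r(\pm r, t) = 0$, and derive a uniform (in $r$) a priori bound on the weighted integrals $\int_{-r}^r \av{\psi_r}^2 + \av{\pd_x \psi_r}^2 \, dx$ by differentiating in time and using the equation together with the already established $C^\infty_{\rm loc}$ bounds on $\vk$, $\pd_s \vk$, etc. Since the flow is a bounded lower order perturbation of the bi-Laplacian for nearly flat curves, standard energy estimates (comparing against the linearization around the axis, then controlling nonlinear terms by the uniform $L^\infty_s$ curvature bound from Step 2) should yield $\psi_r(\cdot,t),\, \pd_x \psi_r(\cdot, t) \in L^2(\R)$ uniformly in $r$. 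Passing to the limit gives $\psi(\cdot,t),\, \pd_x\psi(\cdot,t) \in L^2(\R)$, and combining with the uniform $L^\infty$ control on $\pd_x \psi$ and $\pd^2_x \psi$ (from Step 2 and the identity $\pd_x \gm = (\cos\theta,\sin\theta)\av{\pd_x\gm}$ together with $\pd_s \theta = \vk$), the standard Sobolev-type argument ($f \in L^2(\R)$ and $f' \in L^\infty(\R)$ imply $f(x) \to 0$) gives \eqref{approach}. The principal obstacle is the careful propagation of the weighted $L^2$ bound on $\psi_r$ through the fourth order flow; handling the boundary terms that arise on $[-r,r]$ and checking that they vanish as $r \to \infty$ is where the assumption \eqref{cond-4} is essential.
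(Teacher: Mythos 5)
Your construction in Step 1 and your plan in Step 3 follow the paper's proof essentially verbatim: the paper also builds $\gm$ by Arzel\`a--Ascoli plus a diagonal argument over expanding space-time boxes, and it also proves (ii) by computing $\tfrac{d}{dt}\Ln{\gm(t)\cdot{\bf e}}{\R}^2$ and $\tfrac{d}{dt}\Ln{\pd_x\gm(t)\cdot{\bf e}}{\R}^2$, closing a Gronwall inequality with the constants supplied by the uniform curvature bounds, and then upgrading $L^2(\R)$ membership to pointwise decay via the derivative bounds. (Your Step 3 is in fact over-engineered: no comparison with the linearized bi-Laplacian is needed, only Cauchy--Schwarz against $\Lns{\pd_t\gm}{2}$ and $\Lns{\pd_s\pd_t\gm}{2}$, and the boundary terms are trivially absent because $\psi_r(\pm r,t)=0$.) Where you genuinely diverge is Step 2. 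The paper obtains the uniform-in-$t$ bound (i) by switching to the $\gm$-formulation: it combines $\Lns{\pd^{n+4}_s\gm}{2}\leq\Lns{\pd^n_s\pd_t\gm}{2}+C\Lns{\pd^2_s\gm}{2}^{2n+5}+C\Lns{\pd^2_s\gm}{2}$ (Lemmas \ref{l-est-deri-4}, \ref{l-est-deri-n}) with the decay $\Lns{\pd^n_s\pd_t\gm}{2}\to0$ (Lemma \ref{l-conv-0}) and the uniform bound on $\Lns{\vk_r}{2}$ from Lemma \ref{bd-vk-r}. You instead stay with the curvature energies of Lemma \ref{l-deri-high} and keep the dissipation $-2\Lns{\pd^{j+2}_s\vk}{2}^2-2\lm^2\Lns{\pd^{j+1}_s\vk}{2}^2$ rather than discarding it as in Lemma \ref{high-energy}, interpolating $\Lns{\pd^j_s\vk}{2}^2\leq\ve\Lns{\pd^{j+1}_s\vk}{2}^2+C_\ve\Lns{\vk}{2}^2$ to produce a coercive ODE and conclude by Gronwall. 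This is correct --- the absorption constants indeed depend only on $\sup_{r,t}\Lns{\vk_r}{2}$, the boundary conditions of Lemma \ref{l-odd-k} license the interpolation, and the initial data are controlled uniformly in $r$ as in Lemma \ref{bd-m-vk-r} --- and it has the advantage of being self-contained, avoiding the rather heavy machinery of Lemma \ref{l-conv-0} at this stage. Note, however, that your coercivity hinges on the term $-2\lm^2\Lns{\pd^{j+1}_s\vk}{2}^2$ and hence on the standing assumption $\lm\neq0$; the paper's route does not use this term. One small point worth tightening in your write-up: properness of $\gm(\cdot,t)$ does not follow merely from a lower bound on $\av{\pd_x\gm}$ (a curve of infinite length can remain bounded); the paper argues via the strip estimate $\mathcal{H}^1(\gm(t)\cap S(R))\geq 2R$ inherited from the pinned endpoints of $\gm_r$, and ultimately one should invoke (ii) together with the tangent behaviour at infinity.
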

\begin{proof}
To begin with, we prove a long time existence of a classical solution of \eqref{s-s} 
by making use of Arzel\`a-Ascoli's theorem. 
Let us fix $N>M$ and $T>0$ arbitrarily. 
First we show that $\{ \gm_r \}_{r>N}$ is uniformly bounded on $[-N,N] \times [0,T]$ 
with respect to $r$. 
Let $r-1>N$. For any $(x,t) \in [-N,N] \times [0, T]$, it holds that 
\begin{align*}
\av{\gm_r(x,t)} & \leq \av{\gm_r(x,0)} + \int^T_0 \av{\pd_t \gm_r(x,\tau)} \, d\tau \\
 & \leq \av{\gm_0(x)} 
     + \int^T_0 \left\{ 2 \Lns{\pd^2_s \vk_r(\tau)}{\infty} + \Lns{\vk_r(\tau)}{\infty}^3 
                              + \lm^2 \Lns{\vk_r(\tau)}{\infty} \right\} \, d \tau 
 < C(\gm_0, N, T, \lm)
\end{align*}
Since 
\begin{align*}
\pd^m_x \gm_r - \av{\pd_x \gm_r}^m \pd^m_s \gm_r 
 = P(\av{\pd_x \gm_r}, \cdots, \pd^{m-1}_x \av{\pd_x \gm_r}, \gm_r, \cdots, \pd^{m-1}_s \gm_r), 
\end{align*}
Lemma \ref{ele-bd-lm} yields that there exists a positive constant $C(N,T,\gm_0)$ such that 
\begin{align*}
\av{\pd^m_x \gm_r(x,t)} \leq C(N,T,\gm_0). 
\end{align*}
Moreover, since $\Lns{\pd^m_s \vk_r}{2} < \infty$ for any $m \in \N$, 
we have 
\begin{align*}
\av{\pd_t \gm_r(x,t)} \leq C(N,T,\gm_0). 
\end{align*}

Next we prove an equi-continuity of $\{ \gm_r \}_{r >N}$ with respect to $r$. 
From the uniform boundedness of $\{ \gm_r \}_{r>N}$, we have 
\begin{align*}
\av{\gm_r(x,t) - \gm_r(y,\tau)} 
 & \leq \av{\gm_r(x,t) - \gm_r(y,t)} + \av{\gm_r(y,t) - \gm(y,\tau)} \\
 & \leq \int^x_y \av{\pd_x \gm_r(\xi, t)} \, d \xi 
         + \int^t_\tau \av{\pd_t \gm_r (y, \rho)} \, d \rho \\
 & \leq \int^x_y \av{\pd_x \gm_r(\xi, t)} \, d \xi 
         + \int^t_\tau \av{F^\lm(y,\rho)} \, d \rho 
   \leq C_1 \av{x-y} + C_2 \av{t - \tau}, 
\end{align*}
where the constants $C_1$ and $C_2$ are independent of $r$. 
Similarly we see that  
\begin{align*}
\av{\pd_t \gm_r(x,t) -\pd_t \gm_r(y,\tau)} 
 & \leq C_3 \av{x-y} + C_4 \av{t - \tau}, \\
\av{\pd^m_x \gm_r(x,t) -\pd^m_x \gm_r(y,\tau)} 
 & \leq C_5 \av{x-y} + C_6 \av{t - \tau}, 
\end{align*}
where $m$ is any natural number. 
Thus the sequence $\{ \gm_r \}_{r>N}$ is equi-continuous. 
Therefore, Arzel\`a-Ascoli's theorem and a diagonal method imply that 
there exist a subsequence $\{ \gm_{r_j} \}_{j=1}^\infty$ and 
a family of smooth planar curves $\gm$ defined on $[-N,N] \times [0,T]$ such that 
\begin{align*}
& \su{(x,t) \in [-N,N] \times [0,T] }{\av{\pd^m_x \gm_{r_j}(x,t) - \pd^m_x \gm(x,t)}} \to 0, \\
& \su{(x,t) \in [-N,N] \times [0,T]}{\av{\pd_t \gm_{r_j}(x,t) - \pd_t \gm(x,t)}} \to 0, 
\end{align*}
as $j \to \infty$. Since $\gm_{r_j}$ satisfies $({\rm SS}_{r_j})$ for any $j$, 
we see that $\gm$ satisfies \eqref{s-s} on $[-N,N] \times [0,T]$. 

We can verify that $\gm$ is defined on $\R \times [0,\infty)$. 
Indeed, let $\{ R_j \}^\infty_{j=1}$ be a sequence with $R_j > M$ and $R_j \to \infty$ as $j \to \infty$. 
Set $Q_j = (-R_j, R_j) \times [0, R_j)$. 
Then there exist a subsequence $\{ \gm_{r_{1j}} \}^\infty_{j=1} \subset \{ \gm_r \}_{r > R_1}$ and 
a planar curve $\gm$ defined on $Q_1$ such that $\gm_{r_{1j}} \to \gm$ as $j \to \infty$. 
Moreover $\gm$ satisfies \eqref{s-s} on $Q_1$. 
Next, for $\{ \gm_{r_{1j}} \}_{r_{1j}>R_2}$, there exists a subsequence 
$\{ \gm_{r_{2j}} \}^\infty_{j=1} \subset \{ \gm_{r_{1j}} \}_{r_{1j} > R_2}$ 
such that $\gm_{r_{2j}} \to \gm$ in $Q_2$ as $j \to \infty$. 
Similarly we observe that, for any $m \in \N$, there exists a subsequence 
$\{ \gm_{r_{mj}} \}^\infty_{j=1} \subset \{ \gm_{r_{m-1 j}} \}_{r_{m-1 j} > R_m}$ 
such that $\gm_{r_{mj}} \to \gm$ in $Q_m$ as $j \to \infty$. 
Letting $\{ \gm_{r_{nn}} \}^\infty_{n=1}$, we see that 
$\gm$ is defined on $\R \times [0,\infty)$ 
and satisfies \eqref{s-s} on $(-R,R) \times [0,R)$ for any $R > M$. 

Next we shall prove that $\gm(x,t)$ is a smooth proper curve for any $t>0$. 
Let $R>M$ fix arbitrarily and define a strip domain as follows:  
\begin{align}
S(R):= \{ (x_1, x_2) \mid -R \leq x_1 \leq R \}. 
\end{align}
Then there exists $r>R$ such that 
\begin{align}
-\phi_0(-r) < -R < R < \phi_0(r). 
\end{align}
For such $r>R$, we observe that 
\begin{align} \label{lower-est}
\mathcal{H}^1(\gm_r(t) \cap S(R)) \geq 2R. 
\end{align}
For the curve $\gm_r$ is fixed at the both $(\phi_0(-r),0)$ and $(\phi_0(r), 0)$. 
Since $\gm_{r}(x,t)$ converges to $\gm(x,t)$ smoothly along a sequence $\{r_j \}_j$, 
the inequality \eqref{lower-est} implies that 
\begin{align} \label{lower-est-2}
\mathcal{H}^1(\gm(t) \cap S(R)) \geq 2 R 
\end{align}
for any $R>0$. 

Next we turn to the estimate \eqref{ub}. 
By virtue of Lemma \ref{bd-vk-r}, we see that there exists a constant $C_0 > 0$ being independent
of $r$ and $t$ such that 
\begin{align} \label{ineq-1}
\Lns{\vk_r(t)}{2} < C_0. 
\end{align}
The inequality is equivalent to 
\begin{align} \label{ineq-2}
\Lns{\pd^2_s \gm_r(t)}{2} < C_0. 
\end{align}
Combining Lemmas \ref{l-est-deri-4}, \ref{l-est-deri-n}, and \ref{l-conv-0} with the inequality \eqref{ineq-2}, 
we observe that there exists a constant $C_n>0$ being independent of $r$ and $t$ such that 
\begin{align} \label{ineq-3}
\Lns{\pd^{n+2}_s \gm_r(t)}{2} < C_n, 
\end{align}
where $n$ is any non-negative integer. 
The inequality \eqref{ineq-3} yields that 
\begin{align} \label{ineq-4}
\Lns{\pd^n_s \vk_r(t)}{2} < C_n
\end{align}
holds for each non-negative integer $n$. 
By using Lemma \ref{l-inter-p}, we obtain 
\begin{align} \label{ineq-5}
\max{}{\left\{\Lns{\pd^{n+2}_s \gm_r(t)}{\infty}, \Lns{\pd^{n+2}_s \vk_r(t)}{\infty} \right\}} 
 < \tilde{C}_n
\end{align}
for each $n$, where $\tilde{C}_n$ is independent of $r$ and $t$. 
Therefore we obtain \eqref{ub}. 

Finally we prove \eqref{approach}. 
Let $T>0$ fix arbitrarily. 
First we prove that $\gm(x,t) \cdot {\bf e}$ converges to $0$ as $\av{x} \to \infty$ for 
any $0 < t \leq T$, where ${\bf e}= (0,1)$. Then, by virtue of Lemma \ref{ele-bd-lm}, we have 
\begin{align*}
\dfrac{d}{dt} \Ln{\gm(t) \cdot {\bf e}}{\R}^2 
 &= 2 \int^\infty_{-\infty} (\pd_t \gm(x,t) \cdot {\bf e} ) (\gm(x,t) \cdot {\bf e} ) \, dx \\
 &= 2 \int^\infty_{-\infty} (\pd_t \gm(x,t) \cdot {\bf e} ) \av{\pd_x \gm(x,t)}^\frac{1}{2} \cdot
                                   (\gm(x,t) \cdot {\bf e} ) \av{\pd_x \gm(x,t)}^{-\frac{1}{2}} \, dx \\
 &\leq 2 \left\{ \int_\gm \av{\pd_t \gm(x,t) \cdot {\bf e}}^2 \, ds \right\}^\frac{1}{2} 
            \left\{ \int^\infty_{-\infty} \av{\psi(x,t)}^2 \av{\pd_x \gm(x,t)}^{-1} \, dx \right\}^\frac{1}{2} \\
 &\leq C \Lns{\pd_t \gm(t)}{2} \Ln{\gm(t) \cdot {\bf e}}{\R}. 
\end{align*}
Using Lemmas \ref{bd-vk-r} and \ref{bd-m-vk-r}, we obtain the following: 
\begin{align} \label{2nd-1}
\dfrac{d}{dt} \Ln{\gm(t) \cdot {\bf e}}{\R}^2 
 \leq \Ln{\gm(t) \cdot {\bf e}}{\R}^2 + C, 
\end{align}
where $C$ depends only on $\gm_0$ and $T$. 
The inequality \eqref{2nd-1} implies that $\gm(x,t) \cdot {\bf e}$ satisfies 
\begin{align}
\Ln{\gm(t) \cdot {\bf e}}{\R}^2 \leq \left( \Ln{\gm_0 \cdot {\bf e}}{\R}^2 + C \right) e^T
\end{align}
for any $0 < t \leq T$. Therefore we see that $\gm(x,t) \cdot {\bf e} \to 0$ as $\av{x} \to \infty$ 
for any $0 < t \leq T$. 
Next we prove a convergence of $\pd_x \gm(x,t) \cdot {\bf e}$ as $\av{x} \to \infty$. 
Making use of Lemma \ref{ele-bd-lm}, we have the following: 
\begin{align*}
\dfrac{d}{dt} \Ln{\pd_x \gm(t) \cdot {\bf e}}{\R}^2 
 &= 2 \int^\infty_{-\infty} (\pd_x \pd_t \gm(x,t) \cdot {\bf e} ) ( \pd_x \gm(x,t) \cdot {\bf e} ) \, dx \\
 &= 2 \int^\infty_{-\infty} (\pd_s \pd_t \gm(x,t) \cdot {\bf e} ) \av{\pd_x \gm(x,t)}^\frac{1}{2} \cdot
                                   (\pd_x \psi(x,t) ) \av{\pd_x \gm(x,t)}^{\frac{1}{2}} \, dx \\
 &\leq 2 \left\{ \int_\gm \av{\pd_s \pd_t \gm(x,t) \cdot {\bf e}}^2 \, ds \right\}^\frac{1}{2} 
            \left\{ \int^\infty_{-\infty} \av{\pd_x \psi(x,t)}^2 \av{\pd_x \gm(x,t)} \, dx \right\}^\frac{1}{2} \\
 &\leq C \Lns{\pd_s \pd_t \gm(t)}{2} \Ln{\pd_x \gm(t) \cdot {\bf e}}{\R}. 
\end{align*}
Along the same line as above, we see that $\pd_x \gm(x,t) \cdot {\bf e} \to 0$ as $\av{x} \to \infty$ 
for any $0 < t \leq T$. 
\end{proof}
In the rest of this paper, we prove that the solution of \eqref{s-s} obtained by 
Theorem \ref{main-thm-1} converges to a stationary solution as $t \to \infty$. 
Moreover we see that the stationary solution is a line or a borderline elastica 
(see Figure \ref{figu-1}): 
\begin{figure}
\begin{center}
\includegraphics[height=4.0in, width=5.0in]{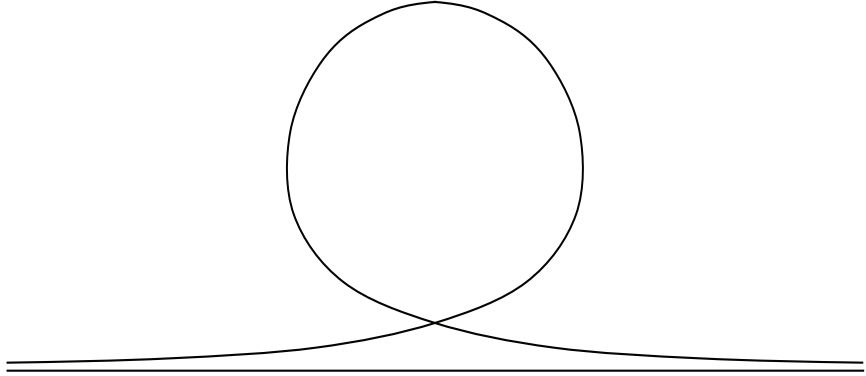}
\caption{An example of a curve whose curvature is given by \eqref{s-curve-1}.}  \label{figu-1}
\end{center}
\end{figure}%
\begin{thm} \label{main-thm-2}
Let $\gm(x,t) : \R \times [0, \infty) \to \R^2$ be a solution of \eqref{s-s} obtained by 
Theorem \ref{main-thm-1}. Then there exist sequences $\{ t_j \}^\infty_{j=1}$ 
and $\{ p_j \}^\infty_{j=1}$ and a smooth proper curve $\hat{\gm} : \R \to \R^2$ such that 
$\gm(\cdot,t_j) - p_j$ converges to $\hat{\gm}(\cdot)$ as $t_j \to \infty$ 
up to a reparametrization. The curvature $\hat{\vk}$ satisfies 
\begin{align} \label{EL-eq}
2 \pd^2_{s} \hat{\vk} + \hat{\vk}^3 - \lm^2 \hat{\vk}=0
\end{align}
and 
\begin{align} \label{el-bd}
\int_{\hat{\gm}} \hat{\vk}^2 \, ds < \infty. 
\end{align}
Moreover $\hat{\vk}$ is given by either 
\begin{align}
\hat{\vk} \equiv 0
\end{align}
or 
\begin{align} \label{s-curve-1}
\hat{\vk}(s)= 
\begin{cases}
k(s-s_0) & \text{for} \quad s>s_0, \\
k(-s+s_0) & \text{for} \quad s<s_0 
\end{cases}
\end{align}
for some $s_0 \in \R$, where $k(s)$ is the solution of either 
\begin{align} \label{form-1}
\begin{cases}
\dfrac{d k}{d s}= - \sqrt{- \dfrac{k^4}{4} + \dfrac{\lm^2}{2} k^2} 
\quad \text{for} \quad s \in \R, \\
k(0)= \sqrt{2} \av{\lm} 
\end{cases}
\end{align}
or 
\begin{align} \label{form-2}
\begin{cases}
\dfrac{d k}{d s}= \sqrt{- \dfrac{k^4}{4} + \dfrac{\lm^2}{2} k^2} 
\quad \text{for} \quad s \in \R, \\
k(0)= -\sqrt{2} \av{\lm}. 
\end{cases}
\end{align}
\end{thm}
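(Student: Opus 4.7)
The plan is to combine an energy-dissipation argument (inherited from the compact approximations) with the uniform derivative bounds \eqref{ub} to extract a subsequential limit of translates, and then to integrate the resulting Euler--Lagrange ODE by quadratures. First, I would obtain a universal dissipation bound for $\gm$ by passing to the limit on the approximating solutions $\gm_r$. Since the flow is the $L^2_s$-gradient flow of $E$, the approximations satisfy
\begin{equation*}
\int^T_0 \Lns{\pd_t \gm_r(\tau)}{2}^2\, d\tau = E(\Gm_{0,r}) - E(\gm_r(T)) \leq \Lns{\vk_{0,r}}{2}^2 + \lm^2 \bigl(\mL(\Gm_{0,r}) - (\phi_0(r) - \phi_0(-r)) \bigr),
\end{equation*}
and the right-hand side is bounded uniformly in $r$ by the computations in the proof of Lemma~\ref{bd-vk-r}. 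Applying Fatou's lemma along the diagonal subsequence of Theorem~\ref{main-thm-1} yields $\int^\infty_0 \Lns{\pd_t \gm(t)}{2}^2\, dt < \infty$, so one can select $t_j \to \infty$ with $\Lns{\pd_t \gm(t_j)}{2} \to 0$.

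Next I would extract a subsequential limit of translates. Reparametrizing $\gm(\cdot, t_j)$ by arc length and choosing $p_j$ to be the image of $s=0$, set $\tilde\gm_j(s) := \gm(s, t_j) - p_j$. The bounds \eqref{ub} give $\pd^n_s \vk(\cdot, t_j)$ uniformly bounded in $L^\infty_{\rm loc}(\R)$ for every $n$, so Arzel\`a--Ascoli and a diagonal argument produce $\vk(\cdot, t_j) \to \hat\vk$ in $C^\infty_{\rm loc}(\R)$; integrating $\pd_s \vs = \vk$ (after a further extraction to pin down the limiting tangent at $s=0$) yields smooth convergence $\tilde\gm_j \to \hat\gm$ on compact sets, with $\hat\gm$ arc length parametrized. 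Fatou's lemma applied to the uniform $L^2_s$-bound on $\vk(\cdot,t_j)$ yields \eqref{el-bd}. For stationarity, note $\pd_t \gm = -F^\lm \bn$ with $F^\lm = 2\pd^2_s \vk + \vk^3 - \lm^2 \vk$, so $\Lns{F^\lm(t_j)}{2} \to 0$; combined with the uniform bounds on $\pd^n_s F^\lm$ from \eqref{ub}, Lipschitz-equicontinuity upgrades this $L^2$-decay to uniform decay on compact arc length intervals, and passing to the smooth limit forces $F^\lm(\hat\gm) \equiv 0$, which is exactly \eqref{EL-eq}.

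Finally, I would classify solutions of \eqref{EL-eq} by quadratures. Multiplying by $\hat\vk'$ and integrating yields the first integral
\begin{equation*}
(\hat\vk')^2 + \tfrac14 \hat\vk^4 - \tfrac12 \lm^2 \hat\vk^2 = C_0.
\end{equation*}
Combined with $\int \hat\vk^2\, ds < \infty$ and the uniform Lipschitz control on $\hat\vk$, this forces $\hat\vk(s), \hat\vk'(s) \to 0$ as $|s|\to\infty$, pinning $C_0=0$. The resulting equation $(\hat\vk')^2 = \tfrac14 \hat\vk^2(2\lm^2 - \hat\vk^2)$ admits only $\hat\vk\equiv0$ and, for nontrivial data, the unique (up to translation and sign) pulse profile peaking at $\pm\sqrt{2}|\lm|$ with exponential decay at infinity; placing the unique extremum at $s_0$ and separating variables on either side of $s_0$ gives \eqref{s-curve-1} together with the two branches \eqref{form-1} and \eqref{form-2}. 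The exponential tail of $\hat\vk$ also shows that $\hat\gm'(s)$ has a limit at each end, so $\hat\gm$ is asymptotic to a straight line, hence proper. The main obstacle I foresee is precisely the step $C_0 = 0$: without it the ODE supports bounded non-decaying periodic solutions, and ruling them out requires converting the integrability $\hat\vk \in L^2(\R)$ plus Lipschitz control into genuine pointwise decay of $\hat\vk$ and $\hat\vk'$ at infinity.
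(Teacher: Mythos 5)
Your proposal is correct and follows the same overall skeleton as the paper's proof (compactness from the uniform bounds \eqref{ub}, stationarity from energy dissipation, classification via the first integral of \eqref{EL-eq} with vanishing constant), but two steps are handled by genuinely different means, both defensible and arguably cleaner. First, for stationarity the paper asserts that $\Lns{\pd^n_s \pd_t \gm(t)}{2} \to 0$ as $t \to \infty$ ``along the same line as in Section \ref{convergence}''; re-running that argument in the non-compact setting requires justifying all the integrations by parts of Lemma \ref{l-conv-0} using the decay \eqref{conv-1} in place of the boundary conditions. You instead only extract $\int^\infty_0 \Lns{\pd_t \gm(t)}{2}^2\, dt < \infty$ by Fatou from the approximating flows (whose dissipation is controlled uniformly in $r$ by exactly the computation of Lemma \ref{bd-vk-r}), select $t_j$ with $\Lns{\pd_t \gm(t_j)}{2} \to 0$, and upgrade to locally uniform decay of $F^\lm(\cdot,t_j)$ via the Lipschitz control coming from \eqref{ub}; this is weaker than the paper's full-limit statement but entirely sufficient for the subsequential conclusion of the theorem, and more economical. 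Second, the paper disposes of the constant in $(\hat{\vk}')^2 = -\hat{\vk}^4/4 + \lm^2 \hat{\vk}^2/2 + C$ with the phrase ``a standard theory of ordinary differential equations yields that \eqref{el-bd} implies $C=0$'', whereas you correctly identify this as the delicate point and supply the actual argument: $\hat{\vk} \in L^2(\R)$ together with the uniform bound on $\pd_s \hat{\vk}$ forces $\hat{\vk}(s) \to 0$ as $\av{s} \to \infty$, whence $(\hat{\vk}')^2 \to C$; $C<0$ is impossible, and $C > 0$ would make $\hat{\vk}$ eventually monotone with derivative bounded away from zero, contradicting its decay. Your route to properness of $\hat{\gm}$ (limiting tangents from the integrability of $\hat{\vk}$) also differs from the paper's, which instead passes the lower bound \eqref{lower-est-2} to the limit; both work.
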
 
\begin{proof}
From \eqref{ub}, it follows that $\pd^n_s \vk(\cdot,t)$ is uniformly continuous with respect to $t$. 
Furthermore, the fact \eqref{ub} implies that, as $\av{x} \to \infty$, 
\begin{align} \label{conv-1}
\av{\pd^{n+2}_s \gm(x,t)} \to 0, \quad \av{\pd^n_s \vk(x,t)} \to 0 
\end{align}
for any $t>0$. 
Then, along the same line as in Section \ref{convergence}, we are able to prove that 
\begin{align} \label{conv-2}
\Lns{\pd^n_s \pd_t \gm(t)}{2} \to 0 \quad \text{as} \quad t \to \infty. 
\end{align}
Here we reparametrize $\gm$ by its arc length, i.e., $\gm= \gm(s,t)$. 
Then, \eqref{lower-est-2} implies that $\gm(s,t)$ is defined on $[0,L] \times [0,\infty)$ for 
any $L \geq 2R$. 
In the following, let $L \geq 2R$ fix arbitrarily. 
For the curve $\gm=\gm(s,t) : [0,L] \times [0,\infty) \to \R^2$, first we observe that 
\begin{align}
\av{\gm(s,t) - \gm(0,t)} \leq s \leq L 
\end{align}
for any $(s,t) \in [0,L] \times [0, \infty)$. 
Thus we see that $\gm(s,t) - \gm(0,t)$ is uniformly bounded with respect to $t$. 
It is easy to check that $\gm(s,t) - \gm(0,t)$ is equi-continuous 
with respect to $t$. 
Indeed, since it holds that 
\begin{align*}
\av{\left\{ \gm(s_1,t)-\gm(0,t) \right\} - \left\{ \gm(s_2,t)-\gm(0,t) \right\} }
\leq \av{s_1 - s_2}, 
\end{align*}
if $\av{s_1-s_2}<\vd$, then we have  
\begin{align*}
\av{\left\{ \gm(s_1,t)-\gm(0,t) \right\} - \left\{ \gm(s_2,t)-\gm(0,t) \right\} } 
 < \ve 
\end{align*}
for any $t>0$. 
Moreover, with the aid of \eqref{ub}, we verify that 
\begin{align*}
\av{\pd^n_s \vk(s_1,t) - \pd^n_s \vk(s_2,t)} 
 \leq \int^{s_1}_{s_2} \av{\pd^{n+1}_s \vk(\vs,t)} \, d\vs < C \av{s_1- s_2}. 
\end{align*}
Therefore, by virtue of Arzel\`a-Ascoli's theorem and a diagonal method, we see that 
there exist a sequence $\{t_j \}^\infty_{j=1}$, a planar curve $\hat{\gm}(\cdot)$, 
and a function $\hat{\vk}(\cdot)$ such that 
\begin{gather}
\gm(\cdot,t_j) - \gm(0,t_j) \to \hat{\gm}(\cdot), \\
\pd^n_s \vk(\cdot, t_j) \to \pd^n_{s} \hat{\vk}(\cdot) \label{vk-conv}
\end{gather}
for all $n \in \N \cup \{0\}$ as $t_j \to \infty$. 
This implies that the curve $\hat{\gm}(\cdot)$ is smooth and 
there exists a sequence $\{p_j \}^\infty_{j=1} \subset \R^2$, 
with $p_j= \gm(0, t_j)$, such that 
\begin{align*}
\gm(\cdot, t_j) - p_j \to \hat{\gm}(\cdot) 
\end{align*}
as $t_j \to \infty$. Furthermore, it follows from \eqref{conv-2} and \eqref{vk-conv} 
that $\hat{\vk}$ satisfies \eqref{EL-eq}. 
Since $\gm(s,t)$ converges to $\hat{\gm}$ along a sequence $\{ t_j \}^\infty_{j=1}$ on any 
compact set $[0,L]$, the estimate \eqref{lower-est-2} yields that the limiting curve 
$\hat{\gm}$ is also a smooth proper curve. 
Moreover \eqref{el-bd} follows from \eqref{ub} letting $t \to \infty$ 
along $\{t_j \}^\infty_{j = 1}$. 

Finally we derive a representation formula of $\hat{\vk}$. 
From \eqref{EL-eq}, we obtain 
\begin{align} \label{dk-ds}
\left( \dfrac{d \hat{\vk}}{ds} \right)^2 = -\dfrac{\hat{\vk}^4}{4} + \dfrac{\lm^2}{2}\hat{\vk}^2 +C, 
\end{align}
where $C$ is an arbitrary constant. 
A standard theory of ordinary differential equations yields that 
the fact \eqref{el-bd} implies $C=0$. 
Then it is clear that $\hat{\vk} \equiv 0$ satisfies \eqref{dk-ds}. 
If $\hat{\vk}$ is non-trivial, then there exists a point $s=s_0$ such that $d \hat{\vk}/ds$ vanishes. 
Therefore we obtain the conclusion. 
\end{proof}

\begin{rem} \label{rem-thm-2}
Along the same line as in the proof of Theorem \ref{main-thm-2}, 
we can also prove that, for any sequences $\{ t_j \}_j$ and $\{ p_j \}_j \subset \R^2$ 
with $p_j \in \gm(\cdot, t_j)$, 
there exist a subsequence $\{ t_{j_k} \} \subset \{ t_j \}$ and a stationary solution $\hat{\gm}$ 
such that $\gm(\cdot, t_{j_k}) - p_{j_k} \to \hat{\gm}(\cdot)$ as $t_{j_k} \to \infty$. 
Indeed, the claim is proved by applying our argument to $\gm(\cdot, t_j)$. 
\end{rem}
We define an index of $\gm$ as follows: 
\begin{align*}
i(\gm)= \int_\gm \vk \, ds. 
\end{align*}
Regarding the index $i(\gm)$, we prove that $i(\gm)$ is invariant under 
the shortening-straightening flow for any finite time. 
\begin{lem} \label{wind-invari}
Let $\gm(x,t)$ be a solution of \eqref{s-s}. 
Then $i(\gm)$ is invariant for any finite time $t>0$. 
\end{lem}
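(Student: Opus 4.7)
The plan is to differentiate $i(\gm(\cdot,t))$ in time and show that the result reduces to a boundary term at spatial infinity which vanishes by the decay estimates established earlier. Using Lemma \ref{k-flow} for the evolution of $\vk$ and formula \eqref{ds-pd} for the evolution of the line element $ds$, we compute
\begin{align*}
\frac{d}{dt}\int_\gm \vk\,ds
 &= \int_\gm \pd_t\vk\,ds + \int_\gm \vk\,(\vk F^\lm)\,ds \\
 &= \int_\gm\bigl(-\pd^2_s F^\lm - \vk^2 F^\lm\bigr)\,ds + \int_\gm \vk^2 F^\lm\,ds
  = -\int_\gm \pd^2_s F^\lm\,ds.
\end{align*}
Hence the time derivative collapses to the single boundary contribution
$-\bigl[\pd_s F^\lm\bigr]_{x=-\infty}^{x=+\infty}$, and the lemma will follow once one shows that $\pd_s F^\lm(x,t)\to 0$ as $\av{x}\to\infty$ for every fixed $t>0$.

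For this decay, I would invoke Theorem \ref{main-thm-1}(i): the norms $\Lns{\pd^n_s\vk(t)}{2}$ are finite for every $n$. Consequently each function $\pd^n_s\vk(\cdot,t)$ lies in $L^2$ together with its $s$-derivative, so that $(\pd^n_s\vk)^2$ has an integrable derivative and its limits at $\pm\infty$ must be zero. Since $F^\lm=2\pd^2_s\vk+\vk^3-\lm^2\vk$ and $\pd_s F^\lm=2\pd^3_s\vk+3\vk^2\pd_s\vk-\lm^2\pd_s\vk$ are polynomial expressions in $\vk,\pd_s\vk,\pd^2_s\vk,\pd^3_s\vk$, each of which decays to $0$ at infinity, we obtain $\pd_s F^\lm(x,t)\to 0$ as $\av{x}\to\infty$. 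This gives $\frac{d}{dt}i(\gm(\cdot,t))=0$ for every $t>0$, so $i(\gm(\cdot,t))$ is constant in $t$.

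The only real obstacle is the justification of the pointwise decay at infinity. A conceptually cleaner alternative is to write $i(\gm)$ as $\theta(+\infty,t)-\theta(-\infty,t)$, where $\theta$ is the tangent angle, and to derive directly from $\pd_t\gm=-F^\lm\bn$ together with Lemma \ref{comu} the identity $\pd_t\theta=-\pd_s F^\lm$; integrating this over $[-L,L]$ and sending $L\to\infty$ reduces the invariance of $i$ again to the same decay of $\pd_s F^\lm$. Either route is essentially immediate once the decay of $\pd^n_s\vk$ at spatial infinity is in hand.
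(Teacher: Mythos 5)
Your proposal is correct and follows essentially the same route as the paper: differentiate $i(\gm)$ using Lemma \ref{k-flow} and \eqref{ds-pd}, observe the cancellation of the $\vk^2 F^\lm$ terms so that only $-\int_\gm \pd^2_s F^\lm\,ds$ survives, and reduce to the vanishing of $\pd_s F^\lm$ at spatial infinity, which follows from the $L^2$ bounds on all $\pd^n_s\vk$ (the paper cites Lemma \ref{bd-m-vk-r} for this; you cite Theorem \ref{main-thm-1}(i), and your justification of the pointwise decay from the $L^2$ bounds is in fact slightly more explicit than the paper's).
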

\begin{proof}
By virtue of Lemma \ref{k-flow}, we observe that 
\begin{align*}
\dfrac{d}{dt} i(\gm)  
 &= \int_\gm \vk_t \, ds + \int_\gm \vk \, \pd_t ds \\
 &= - \int_\gm \pd^2_s F^\lm \, ds 
  = - \left[ \pd_s F^\lm \right]^\infty_{-\infty}
  = - \left[ 2 \pd^3_s \vk + 3 \vk^2 \pd_s \vk - \lm^2 \pd_s \vk \right]^\infty_{-\infty}.  
\end{align*}
Since Lemma \ref{bd-m-vk-r} yields that 
\begin{align*}
\int_\gm \left( \pd^m_s \vk \right)^2 \, ds < \infty
\end{align*}
for any $m \in \N$ and finite time $t>0$, we see that, as $\av{x} \to \infty$, 
\begin{align*}
\pd_s F^\lm \to 0.  
\end{align*}
\end{proof}

With the aid of Lemma \ref{wind-invari} and Remark \ref{rem-thm-2}, 
we can characterize a dynamical aspect of $\gm$ starting from $\gm_0$ with $i(\gm_0) \neq 0$. 

\begin{thm}\label{main-thm-3}
Let $\gm_0 : \R \to \R^2$ be a smooth planar curve satisfying \eqref{cond-1}--\eqref{cond-4}. 
Let $\gm : \R \times [0, \infty) \to \R^2$ be a solution of \eqref{s-s} obtained by 
Theorem \ref{main-thm-1}. If $i(\gm_0) \neq 0$, then there exists at least one sequence 
$\{t_j \}_j$ with $t_j \to \infty$ such that $\gm(\cdot, t_j)$ converges to a borderline elastica 
as $t_j \to \infty$. 
\end{thm}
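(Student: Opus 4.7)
The plan is to derive a uniform-in-time lower bound $\Lns{\vk(\cdot,t)}{\infty}\geq c>0$ from the hypothesis $i(\gm_0)\neq 0$ and then apply Remark~\ref{rem-thm-2} with base point $p_j$ placed at a point of maximum curvature on $\gm(\cdot,t_j)$. The subsequential limit $\hat\gm$ will then satisfy $|\hat\vk(0)|\geq c>0$, which by the dichotomy of Theorem~\ref{main-thm-2} rules out the trivial stationary solution $\hat\vk\equiv 0$ and forces $\hat\gm$ to be a borderline elastica.

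I would first record two global ingredients. Because $\gm(\cdot,t)$ is $C^1$-close to a horizontal axis at infinity with $\pd_x\gm\cdot(1,0)\to 1$ (the positivity being inherited from~\eqref{cond-3} via Theorem~\ref{main-thm-1}(ii)), the tangent angle $\theta(s,t)$ satisfies $\theta(\pm\infty,t)\in 2\pi\Z$; combined with Lemma~\ref{wind-invari} this yields $i(\gm(t))=i(\gm_0)\in 2\pi\Z\setminus\{0\}$, so $|i(\gm(t))|\geq 2\pi$ for every $t>0$. Second, the renormalized excess length
\[
\mathcal{E}(\gm(t)) := \int_{\gm(t)}(1-\cos\theta)\,ds
\]
is bounded uniformly in $t$ by some constant $E_0$: this follows by passing $r\to\infty$ along the diagonal sequence from Theorem~\ref{main-thm-1} in the energy inequality
\[
\int_{\gm_r(t)}\vk_r^2\,ds+\lm^2\mathcal{E}(\gm_r(t))\leq \int_{\Gm_{0,r}}\vk_{0,r}^2\,ds+\lm^2\mathcal{E}(\Gm_{0,r}),
\]
whose right-hand side is bounded independently of $r$ by the argument in the proof of Lemma~\ref{bd-vk-r}; non-negativity of $1-\cos\theta$ lets the bound pass to the limit via monotone convergence on $[-R,R]$ followed by $R\to\infty$.

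The key step is geometric. Taking $i(\gm_0)>0$ without loss of generality, $\theta(+\infty,t)-\theta(-\infty,t)\geq 2\pi$, so by continuity there exist, for each $t$, values $s_a(t)<s_b(t)$ with $\theta(s_a,t)=\pi/2$, $\theta(s_b,t)=3\pi/2$, and $\theta(\cdot,t)\in(\pi/2,3\pi/2)$ throughout $(s_a,s_b)$: take $s_b$ as the first time $\theta$ attains $3\pi/2$ and $s_a$ as the last time before $s_b$ at which $\theta=\pi/2$. Since the net turning on this sub-interval is $\pi$,
\[
\pi=\biggl|\int_{s_a}^{s_b}\vk\,ds\biggr|\leq (s_b-s_a)\,\Lns{\vk(\cdot,t)}{\infty},
\]
and since $1-\cos\theta\geq 1$ throughout $(s_a,s_b)$,
\[
E_0\geq \mathcal{E}(\gm(t))\geq \int_{s_a}^{s_b}(1-\cos\theta)\,ds\geq s_b-s_a\geq \frac{\pi}{\Lns{\vk(\cdot,t)}{\infty}},
\]
which gives $\Lns{\vk(\cdot,t)}{\infty}\geq \pi/E_0>0$ uniformly in $t$.

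To conclude, for any $t_j\to\infty$ choose $x_j$ realizing $|\vk(x_j,t_j)|=\Lns{\vk(\cdot,t_j)}{\infty}$ and set $p_j=\gm(x_j,t_j)$; Remark~\ref{rem-thm-2} yields a subsequence along which $\gm(\cdot,t_{j_k})-p_{j_k}$ converges in $C^\infty$ (after reparametrization) to a stationary solution $\hat\gm$ as in Theorem~\ref{main-thm-2}, and passing to the limit gives $|\hat\vk(0)|\geq \pi/E_0>0$, so $\hat\gm$ is a borderline elastica. The main obstacle I expect is the third paragraph: translating the topological constraint $i(\gm_0)\neq 0$ into a pointwise bound on $\Lns{\vk}{\infty}$ via the excess-length identity. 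The geometric content is that a sub-arc on which the tangent points into the left half-plane must be long (length at least $\pi/\Lns{\vk}{\infty}$) and throughout satisfies $1-\cos\theta\geq 1$, so a small $\Lns{\vk}{\infty}$ would consume more renormalized length than the uniform bound $E_0$ allows.
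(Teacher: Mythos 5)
Your proposal is correct, and it reaches the conclusion by a genuinely different mechanism than the paper for the one step that actually matters, namely excluding the trivial stationary solution $\hat{\vk}\equiv 0$. The paper's own proof is short and qualitative: since $i(\gm(t))=i(\gm_0)\neq 0$ the curve always contains a loop $l(\gm(t))$, the base points $p_j$ are chosen on that loop, and it is then simply asserted that the limit obtained from Remark \ref{rem-thm-2} cannot be a straight line. That assertion is the delicate point --- one must rule out, for instance, a loop whose curvature degenerates to zero uniformly (an ever larger loop), in which case the recentered curves would still converge locally to a line --- and the paper leaves the reader to supply the reason. Your argument supplies exactly that reason in quantitative form: the renormalized excess length $\int_\gm(1-\cos\theta)\,ds$ is uniformly bounded by $E_0$ (this is precisely the quantity $\mL(\Gm_{0,r})-(\phi_0(r)-\phi_0(-r))$ controlled in Lemma \ref{bd-vk-r}, passed to the limit by lower semicontinuity of a nonnegative integrand), a total turning of at least $2\pi$ forces a subarc of turning $\pi$ on which $\cos\theta\leq 0$, and comparing the two bounds gives $\Lns{\vk(\cdot,t)}{\infty}\geq \pi/E_0$ uniformly in $t$; basing at the curvature maximum then makes $|\hat{\vk}(0)|\geq\pi/E_0>0$ immediate. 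The only caveats are ones your argument shares with the paper itself: the existence of the limits $\theta(\pm\infty,t)$ (equivalently, convergence of $\int_\gm\vk\,ds$), which already underlies the definition of $i(\gm)$ and Lemma \ref{wind-invari}, and the identification of the asymptotic tangent with $(1,0)$ at both ends so that $i(\gm_0)\in 2\pi\Z\setminus\{0\}$ gives $|i(\gm_0)|\geq 2\pi$; both follow from \eqref{cond-1}--\eqref{cond-4} and Theorem \ref{main-thm-1}(ii). In short, your route costs a little more work up front (the excess-length functional and the tangent-angle dissection) but buys a uniform pointwise lower bound on the curvature and a fully rigorous exclusion of the line, whereas the paper's loop-based choice of $p_j$ is shorter but leaves the key non-degeneracy implicit.
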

\begin{proof}
Let $\{t_j \}_j$ be an arbitral sequence with $t_j \to \infty$. 
If $i(\gm_0) \neq 0$, then Lemma \ref{wind-invari} implies that $\gm(\cdot,t)$ always contains 
at least one loop part $l(\gm(t))$. 
Let us define a sequence $\{ p_j \}_j \subset \R^2$ as  
\begin{align} \label{pj-def}
p_j \in l(\gm(t_j))
\end{align}
for each $j \in \N$. Then, as we stated in Remark \ref{rem-thm-2}, 
there exist a subsequence $\{ t_{j_k} \} \subset \{ t_j \}$ and a stationary solution $\hat{\gm}$ 
such that $\gm(\cdot, t_{j_k}) - p_{j_k} \to \hat{\gm}(\cdot)$ as $t_{j_k} \to \infty$. 
By virtue of \eqref{pj-def}, the curve $\hat{\gm}$ can not be a straight line. 
Therefore Theorem \ref{main-thm-2} gives us the conclusion. 
\end{proof}

\vspace{0.4cm}

\noindent
{\bf Acknowledgements}

\vspace{0.1cm}

The first author was partially supported by the Fondazione CaRiPaRo Project 
{\it Nonlinear Partial Differential Equations: models, analysis, and 
control-theoretic problems}. 
The second author was partially supported by Grant-in-Aid for Young Scientists (B) (No. 24740097). 


\vspace{0.6cm}



\end{document}